\newtheorem{theorem}{Theorem}[section]
\newtheorem{proposition}[theorem]{Proposition}
\newtheorem{lemma}[theorem]{Lemma}
\newtheorem{definition}[theorem]{Definition}
\newtheorem{corollary}[theorem]{Corollary}
\numberwithin{equation}{section}
\newcommand{\rr}{{\mathbb R}}
\newcommand{\zz}{{\mathbb Z}}
\newcommand{\nn}{{\mathbb N}}
\newcommand{\ee}{{\mathbb E}}
\newcommand{\pp}{{\mathbb P}}
\newcommand{\cals}{\mathcal{S}}
\newcommand{\calm}{{\cal M}}
\newcommand{\calf}{{\cal F}}
\newcommand{\calee}{{\cal E}}
\newcommand{\loc}{{\rm loc}}
\newcommand{\diam}{{\rm diam}}
\newcommand{\one}{{\bf 1}}
\newcommand{\half}{{\frac{1}{2}}}
\newcommand{\supp}{\hbox{supp\,}}
\newcommand{\aint}{{\mathfrak A}_{\text{int}}}
\newcommand{\atr}{{\mathfrak A}_{\text{tr}}}
\begin{document}

\title{Maximal operators and differentiation theorems for sparse sets}
\author{Izabella {\L}aba and Malabika Pramanik}
\date{July 21, 2010}
\maketitle


\begin{abstract}

We study maximal averages associated with singular measures
on $\rr$. Our main result is a construction of singular Cantor-type measures supported on sets of Hausdorff dimension $1 - \epsilon$ with $0 \leq \epsilon < \frac{1}{3}$ for
which the corresponding maximal operators are bounded on $L^p(\mathbb R)$ for
$p > (1 + \epsilon)/(1 - \epsilon)$.  
As a consequence, we are able to answer a question
of Aversa and Preiss on density and differentiation theorems for singular measures in one dimension.
Our proof combines probabilistic techniques with the methods developed in 
multidimensional Euclidean harmonic analysis, in particular there are strong
similarities to Bourgain's proof of the circular maximal theorem in two dimensions. 

Mathematics Subject Classification: 26A24, 26A99, 28A78, 42B25.
Keywords: maximal estimates, differentiation theorems, Cantor sets, singular measures.
\end{abstract}


\allowdisplaybreaks{

\section{Introduction}
\subsection{Maximal operators}\label{sec-statement}

Let $\{S_k : k \geq 1 \}$ be a decreasing sequence of subsets of $\rr$. We define
the maximal operator associated with this sequence by
\begin{align}\label{max-e2}
\tilde\calm f(x)&:=\sup_{r>0, k\geq 1} \frac{1}{|S_k|} \int_{S_k} |f(x+ry)|dy. 
\end{align}
While the definition (\ref{max-e2}) is quite general, we will focus on 
cases where the sequence $\{S_k\}$ arises from a Cantor-type iteration, so that in particular
each $S_k$ is a union of finitely many intervals.  We will further assume that 
$|S_k|\to 0$ as $k\to\infty$.

Under mild conditions on the Cantor iteration process, the densities $\phi_k=\frac{1}{|S_k|}
\one_{S_k}$ converge weakly to a probability measure $\mu$ supported on the set
$S = \bigcap_{k=1}^{\infty} S_k$.  We then define the maximal operator with respect to 
$\mu$:
\begin{equation} \label{max-e1}
\tilde{\mathfrak M} f(x) :=  \sup_{r > 0} \int \left| f(x + ry) \right| d\mu(y).
\end{equation}

We will be interested in the $L^p$ mapping properties of  $\tilde{\calm}$.
Since $\tilde{\mathfrak M}$ is clearly dominated by $\tilde{\calm}$, similar estimates will
follow for  $\tilde{\mathfrak M}$ with the same range of exponents. 

We will also be concerned with $L^p\to L^q$ maximal estimates with $p<q$.
For this purpose, it is necessary to define the modified maximal operators
\begin{align}\label{max-e102}
\tilde\calm^a f(x)&:=\sup_{r>0,\ k\geq 1} r^a \int |f(x+ry)|\phi_k(y)dy\ , 
\end{align}
\begin{equation} \label{max-e104}
\tilde{\mathfrak M}^a f(x) :=  \sup_{r > 0} r^a \int \left|f(x + ry) \right| d\mu(y)\  ,
\end{equation}
where the exponent $a=\frac{1}{p}-\frac{1}{q}$ accounts for the appropriate scaling
correction.  Note that $\tilde\calm^0 =\tilde\calm$ and $\tilde{\mathfrak M}^0=\tilde{\mathfrak M}$.

Finally, we will need the restricted maximal operators 
\begin{equation}\label{max-e3}
\calm f(x) :=\sup_{1<r<2,k\geq 1} \frac{1}{|S_k|} \int_{S_k} |f(x+ry)|dy\ ,
\end{equation}
\begin{equation} \label{max-e103}
\mathfrak M f(x) :=  \sup_{1<r<2} \int \left|f(x + ry) \right| d\mu(y)\  ,
\end{equation}
where the range of the dilation factor $r$ is limited to a single scale. These operators will play a critical role in the proofs of the unrestricted maximal estimates.

\subsection{The main results}
\begin{theorem}\label{thm-main}
There is a decreasing sequence of sets $S_k \subseteq [1,2]$ with the following properties: 
\begin{enumerate}[(a)]
\item Each $S_k$ is a disjoint union of finitely many intervals.
\item $|S_k| \searrow 0$ as $k \rightarrow \infty$.
\item The weak-$\ast$ limit  $\mu$ of the densities $\mathbf 1_{S_k}/|S_k|$ exists.
\item The restricted maximal operators $\mathcal M$ and $\mathfrak M$ defined in (\ref{max-e3}) and (\ref{max-e103}) are bounded from $L^p[0,1]$ to $L^q(\rr)$ for any $p,q\in(1,\infty)$, and from $L^p(\rr)$ to $L^q(\rr)$ for any $1<p\leq q<\infty$. \label{thm-main-restricted}
\item The unrestricted maximal operators $\tilde{\mathcal M}^a$ and $\tilde{\mathfrak M}^a$ defined in (\ref{max-e2}) and (\ref{max-e1}) are bounded from $L^p(\rr)$ to
$L^q(\rr)$ whenever $1<p\leq q<\infty$, with $a=\frac{1}{p}-\frac{1}{q}$.  In particular, $\tilde{\mathcal M}$ and $\tilde{\mathfrak M}$ are bounded on $L^p(\rr)$ for $p>1$. \label{thm-main-unrestricted1} 
\end{enumerate}
\end{theorem}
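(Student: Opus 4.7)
The plan is to build $S = \bigcap_k S_k$ as a \emph{random} Cantor-type set. Start from $S_0 = [1,2]$; having built $S_{k-1}$ as a disjoint union of intervals of length $\ell_{k-1}$, partition each such interval into many sub-intervals of length $\ell_k \ll \ell_{k-1}$, and randomly select a prescribed fraction of them (with independent choices across the different parent intervals, and possibly across scales). The parameters $\ell_k$ and the retention fractions should be chosen so that $|S_k|\to 0$ slowly enough that the weak-$\ast$ limit $\mu$ exists (properties (a)--(c) are immediate from the construction and compactness of probability measures on $[1,2]$), but fast enough that we gain real dimensional decay. Randomness is essential: deterministic self-similar Cantor sets carry arithmetic structure (coincident $+$tangential configurations) that obstructs any $L^p$-bound for small $p$, whereas a random construction should be transversal at most scales.

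The heart of the argument is part (d), the restricted maximal bound for $\mathcal{M}$; I would attack this first, since $\mathfrak{M}\leq\mathcal{M}$ will then follow, and so will the seemingly stronger $L^p\to L^q$ statement by interpolation with the trivial $L^\infty\to L^\infty$ bound once we have $L^p\to L^p$ for all $p>1$. For a fixed scale $k$, linearize by choosing a measurable $r(x)\in(1,2)$, write the operator as convolution with $\phi_k(\cdot/r(x))/r(x)$, and apply a $TT^*$-type argument. Following Bourgain's strategy for the circular maximal function, I would decompose the kernel of $TT^*$ according to whether two spherical/Cantor averages at radii $r,r'$ intersect \emph{transversally} or \emph{internally/tangentially}, writing
\begin{equation*}
TT^* = \aint + \atr .
\end{equation*}
The transversal piece $\atr$ is controlled by a stationary-phase/geometric argument: transversal intersection of two scaled copies of $S_k$ gives Fourier decay of the product measure, whence an $L^2$ estimate with an acceptable loss. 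The internal/tangential piece $\aint$ is the delicate one: here one combines an $L^\infty$ geometric bound with probabilistic estimates saying that, after averaging over the random construction, the measure $\mu$ has few near-coincidences at each scale. The hope is to show that the restricted maximal bound holds almost surely, extracting a single realization satisfying all conclusions.

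From the restricted bound to the unrestricted one (part (e)), a standard Littlewood--Paley / dyadic-scale argument should work. Write
\begin{equation*}
\tilde{\mathcal{M}}^a f(x) \leq \sup_{j\in\zz}\, \sup_{2^j < r < 2^{j+1},\ k\geq 1} r^a \frac{1}{|S_k|}\int_{S_k}|f(x+ry)|\,dy ,
\end{equation*}
rescale each $j$-th piece to the unit scale by the dilation $f\mapsto f(2^j\cdot)$, apply the restricted estimate at that scale, and pay a factor $2^{-ja\cdot(q-\text{something})}$ after summing. The gain $a=\frac{1}{p}-\frac{1}{q}>0$ when $p<q$ gives a convergent geometric series and yields the full unrestricted $L^p\to L^q$ bound; the case $p=q$ reduces to bounding $\tilde{\mathcal{M}}$ on $L^p$ for all $p>1$, handled by interpolating the $L^{p_0}$ restricted bound (available for any $p_0>1$) with a trivial endpoint.

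The main obstacle is unquestionably the internal/tangential contribution $\aint$ in the restricted bound for $\mathcal{M}$. This is where the usual obstructions to low-$p$ estimates live, and where Bourgain's circular-maximal proof was technically hardest; here we have the added complication that the averaging set $S_k$ is itself fractal of near-full dimension rather than a smooth curve, so the geometric ``transversality'' one exploits in the smooth case has to be replaced by a quantitative probabilistic statement about generic independence of scaled copies of the random Cantor set. Making this quantitative --- with enough room to sum over scales $k$, radii $r$, and realizations --- is the central difficulty, and it is where the interplay of random construction with harmonic-analytic decomposition must be carefully orchestrated.
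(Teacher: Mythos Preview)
Your high-level strategy---random Cantor construction, linearization, Bourgain-style transverse/tangential split, probabilistic control---matches the paper's. But two aspects need correction. First, a $TT^*$ argument estimates only $\|\Phi_k^*\mathbf 1_\Omega\|_2^2$ via \emph{pairwise} intersections of affine copies of $S_k$, and even optimal bounds there yield at best a restricted strong-type $(2,2)$ estimate for $\Phi_k^*$, hence $\mathcal M$ bounded on $L^p$ only for $p>2$. To reach all $p>1$ the paper bounds $\|\Phi_k^*\mathbf 1_\Omega\|_n^n$ for every even $n$, which unfolds into an $n$-fold correlation $\int\prod_{\ell=1}^n\sigma_k((z-c_\ell)/r_\ell)\,dz$; the transverse/tangential dichotomy must therefore be set up at the level of $n$-tuples of affine copies, not pairs. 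Moreover, the transverse piece is \emph{not} handled by Fourier decay or stationary phase: a singular measure on $\rr$ cannot satisfy $|\widehat\mu(\xi)|\lesssim|\xi|^{-a}$ with $a>1/2$, so Rubio de Francia--type arguments are unavailable. The paper instead bounds the transverse $n$-fold correlation directly: after a re-indexing that orders the $n$-tuples by the largest $\alpha_k(\mathbf i_\ell)$, the partial sums form a martingale with bounded increments, and Azuma's inequality delivers the required bound with high probability over the randomness in $S_{k+1}$.

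Second, the passage to the unrestricted operator is not a geometric sum in the dyadic scale. The weight $r^a$ with $a=\frac1p-\frac1q$ exactly compensates the rescaling, so applying the restricted bound at each scale $2^{-m}$ gives $C\|f\|_p$ with no decay in $m$; there is nothing to sum. The paper inserts a Haar decomposition $f=\mathbb E_m f+\sum_{s\geq 0}\Delta_{m+s}f$ at scale $2^{-m}$ and proves an improved restricted bound $\|\mathcal M(\Delta_{m+s}f)\|_q\lesssim 2^{-\eta\sqrt{s}}\|\Delta_{m+s}f\|_p$, using the cancellation in $\Delta_{m+s}f$ (the $\sqrt s$ arises from balancing the trivial Hölder estimate for small $k$ against the $2^{-\eta_0 k}$ decay of $\mathcal M_k$, given that $\delta_k^{-1}$ grows like $N^{k^2}$). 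Summation in $s$ and then in $m$ goes through Littlewood--Paley square-function inequalities when $q\geq 2$; the range $p\leq q<2$ requires a separate bootstrapping argument (interpolating a vector-valued estimate against itself) rather than interpolation with a trivial endpoint.
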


As a corollary, we obtain a differentiation theorem for averages on $S_k$ that answers a 
question of Aversa and Preiss \cite{AP2} (see \S\ref{Aversa-Preiss-description} for more details).

\begin{theorem}\label{AP-diff}
Let $\{S_k: k \geq 1 \}$ be the sequence of sets given by Theorem \ref{thm-main}, with the
limiting measure $\mu$. Then 
for every $f\in L^p(\mathbb R)$ with $p\in (1,\infty)$ we have
\begin{equation}\label{diff-e10}
\lim_{r\to 0}\sup_{k}\left|\frac{1}{r|S_k|}\int_{x+rS_k}f(y)dy- f(x) \right| = 0 \hbox{ for a.e. }x\in\rr,
\hbox{ and }
\end{equation}
\begin{equation}\label{diff-e-meas}
\lim_{r\to 0}\left| \int f(x+ry)d\mu(y)- f(x) \right| = 0 \hbox{ for a.e. }x\in\rr.
\end{equation}
\end{theorem}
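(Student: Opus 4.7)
The plan is to deduce both statements from the unrestricted $L^p$ bounds in part (e) of Theorem~\ref{thm-main} via the classical two-step recipe: verify pointwise convergence on a dense subclass of $L^p$, then use a weak-type inequality for the controlling maximal operator to transfer this to all of $L^p$. Changing variables $y=x+rz$ in (\ref{diff-e10}) gives
\[
\frac{1}{r|S_k|}\int_{x+rS_k}f(y)\,dy \;=\; A_{k,r}f(x) \;:=\; \frac{1}{|S_k|}\int_{S_k}f(x+rz)\,dz,
\]
and the definition (\ref{max-e2}) ensures $\sup_{k\geq 1,\,r>0}|A_{k,r}f(x)|\leq \tilde\calm f(x)$. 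Likewise $|\int f(x+ry)\,d\mu(y)|\leq \tilde{\mathfrak M}f(x)$. Theorem~\ref{thm-main}(e) applied with $q=p$ (so $a=0$) yields the strong, and hence weak, $(p,p)$-boundedness of both $\tilde\calm$ and $\tilde{\mathfrak M}$ for every $p\in(1,\infty)$.

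Next, I would verify pointwise convergence on $C_c(\rr)$. If $g\in C_c(\rr)$ has modulus of continuity $\omega_g$, then for any $z\in S_k\subset[1,2]$ we have $|g(x+rz)-g(x)|\leq \omega_g(2r)$ uniformly in $z$ and $k$, so
\[
\sup_k|A_{k,r}g(x)-g(x)|\;\leq\;\omega_g(2r)\;\longrightarrow\;0 \quad\text{as }r\to 0^+,
\]
and, since $\mu$ is a probability measure supported in $[1,2]$, the same estimate gives $|\int g(x+ry)\,d\mu(y)-g(x)|\leq\omega_g(2r)\to 0$, both convergences uniform in $x$.

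For general $f\in L^p(\rr)$ I would fix $\epsilon>0$ and decompose $f=g+h$ with $g\in C_c(\rr)$ and $\|h\|_p<\epsilon$. Setting $\Lambda f(x):=\limsup_{r\to 0^+}\sup_k|A_{k,r}f(x)-f(x)|$, the previous step and subadditivity yield $\Lambda f(x)\leq \tilde\calm h(x)+|h(x)|$, so that
\[
|\{\Lambda f>\lambda\}|\;\leq\;|\{\tilde\calm h>\lambda/2\}|+|\{|h|>\lambda/2\}|\;\leq\;(C_p+1)(2/\lambda)^p\,\epsilon^p
\]
for every $\lambda>0$. Letting $\epsilon\to 0$ forces $\Lambda f=0$ almost everywhere, which is (\ref{diff-e10}); replacing $A_{k,r}$ and $\tilde\calm$ throughout by their $\mu$-counterparts and $\tilde{\mathfrak M}$ gives (\ref{diff-e-meas}) in exactly the same way. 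I do not anticipate any substantive obstacle here: the analytic content is concentrated entirely in Theorem~\ref{thm-main}(e), and the passage from the maximal inequality to almost-everywhere convergence is the standard Hardy--Littlewood argument.
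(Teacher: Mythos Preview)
Your proposal is correct and follows essentially the same route as the paper's own proof in Section~\ref{AP-section}: verify pointwise convergence for continuous (compactly supported) functions, approximate a general $f\in L^p$ by such a function, and use the $L^p$-boundedness of $\tilde{\calm}$ (respectively $\tilde{\mathfrak M}$) together with Chebyshev's inequality to control the exceptional set. The only cosmetic difference is that the paper phrases the density step with a parameter $t$ and initially localizes to $L^p[0,1]$, while you work directly on $L^p(\rr)$ with $C_c(\rr)$ approximants; your version is in fact slightly cleaner.
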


The limiting set $S=\bigcap_{k=1}^\infty  S_k$ constructed in our proof of 
Theorem \ref{thm-main} has Hausdorff dimension 1.  However, we are
also able to prove similar maximal estimates for sequences of sets whose limit has Hausdorff 
dimension $1 - \epsilon$ with $\epsilon > 0$, provided that the range of exponents is
adjusted accordingly.

\begin{theorem}\label{thm-main3}
For any $0 < \epsilon < \frac{1}{3}$, there is a decreasing sequence of sets $S_k\subset
[1,2]$ obeying the conditions (a)--(c) of Theorem \ref{thm-main}
and such that: 
\begin{enumerate}[(a)]
\item $S = \bigcap_{k=1}^{\infty} S_k$ has Hausdorff dimension $1 - \epsilon$.
\item The restricted maximal operators $\mathcal M$ and $\mathfrak M$ are bounded 
from $L^p[0,1]$ to $L^q(\rr)$ for any $p,q$ such that \label{thm-main3-restricted} 
\begin{equation}\label{zz-e11}
\frac{1+\epsilon}{1-\epsilon}<p<\infty \text{ and }1<q<\frac{1-\epsilon}{2\epsilon}p,
\end{equation}
and from $L^p(\rr)$ to $L^q(\rr)$ for any $p,q$ such that $p\leq q$ and (\ref{zz-e11}) holds.
\item The unrestricted maximal operators $\tilde{\mathcal M}^a$ and $\tilde{\mathfrak M}^a$ 
are bounded from $L^p(\rr)$ to $L^q(\rr)$ with  $a=\frac{1}{p}-\frac{1}{q}$ for any $p,q$ such that $p \leq q$ and (\ref{zz-e11}) holds. In particular, $\tilde{\mathcal M}$ and $\tilde{\mathfrak M}$ are bounded on $L^p(\rr)$ for $p > \frac{1 + \epsilon}{1 - \epsilon}$. \label{thm-main3-unrestricted1} 
\item The family of sets $\mathcal S = \{rS_k : k \geq 1 \}$ and the measure $\mu$ differentiate $L^p(\mathbb R)$ in the 
sense of (\ref{diff-e10}) and (\ref{diff-e-meas}) for all $p >\frac{1+\epsilon}{1 - \epsilon}$.    
\label{lower-dim-diff}
\end{enumerate}
\end{theorem}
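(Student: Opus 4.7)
My plan is to replay the blueprint used for Theorem~\ref{thm-main} (the $\epsilon=0$ case) while keeping the dimension parameter $\epsilon$ explicit throughout. The sets $S_k$ will arise from a randomized Cantor-type iteration: at stage $k$ each surviving interval is subdivided into $N_k$ equal pieces, and independently for each parent one retains a uniformly random subfamily of cardinality $\lfloor N_k^{1-\epsilon}\rfloor$. For a rapidly growing sequence $\{N_k\}$ this forces $|S_k|\searrow 0$ and, by a standard mass-distribution / energy argument on the resulting tree, gives $\dim_H S = 1-\epsilon$, establishing~(a); the weak-$\ast$ limit $\mu$ exists by compactness.

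The heart of the matter is~(b), the single-scale estimate for $\calm$ and $\mathfrak M$. I would linearize the supremum by measurable selections $k(x), r(x)\in[1,2]$ and, following the strategy of Bourgain's circular maximal theorem, decompose $\mu=\mu_{\text{lo}}+\mu_{\text{hi}}$ with $\mu_{\text{lo}}$ a low-frequency part cut off at a scale $\lambda$ to be optimized. The low-frequency piece is pointwise dominated by the Hardy--Littlewood maximal operator, hence bounded on every $L^p$. For the high-frequency part I would run a $TT^\ast$ argument using the Fourier decay bound
\[
|\widehat{\mu}(\xi)|\;\lesssim\;|\xi|^{-(1-\epsilon)/2}\cdot(\text{log correction}),
\]
which I would prove by applying concentration-of-measure estimates (Bernstein/Azuma type) to the independent random choices in the construction at each scale. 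Plancherel then yields an $L^2\to L^2$ bound that degrades polynomially in $\lambda$; interpolation against the trivial $L^\infty\to L^\infty$ estimate and optimization in $\lambda$ produce restricted $L^p\to L^q$ bounds in exactly the range (\ref{zz-e11}). The hypothesis $\epsilon<\tfrac{1}{3}$ appears here as the threshold that keeps the resulting region of admissible exponents nonempty.

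Part~(c) follows from (b) by the standard dyadic-scaling reduction: on $r\in[2^j,2^{j+1})$ the operator is a rescaling of the restricted one, so after inserting the factor $r^a$ with $a=\tfrac{1}{p}-\tfrac{1}{q}$ the $L^p\to L^q$ norms across scales form a geometric series whenever $a>0$, while the endpoint $p=q$ is already covered by the $L^p(\rr)\to L^p(\rr)$ statement in~(b). Part~(d) is then routine: the maximal inequalities imply that the set of $f$ for which (\ref{diff-e10})--(\ref{diff-e-meas}) hold is closed in $L^p$ for every admissible $p$, and it contains the dense class of continuous compactly supported functions (for which pointwise convergence follows from weak-$\ast$ convergence of the densities $\phi_k$ to $\mu$ and uniform continuity).

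The main obstacle, as in Theorem~\ref{thm-main}, is the Fourier decay step: one must calibrate the parameters $\{N_k\}$ carefully enough that the probabilistic estimates deliver a \emph{single} measure $\mu$ whose Fourier transform obeys the displayed bound \emph{uniformly} over a range of $|\xi|$ wide enough to drive the $TT^\ast$ argument, rather than only on average. Carrying out this quantitative probabilistic construction so that both the prescribed Hausdorff dimension $1-\epsilon$ and the sharp Fourier decay hold simultaneously, and checking that the numerology remains consistent up to the boundary $\epsilon=\tfrac{1}{3}$, is where essentially all the real work lies; the rest of Theorem~\ref{thm-main3} is a dimension-tracking modification of the tools already developed for Theorem~\ref{thm-main}.
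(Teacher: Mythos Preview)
Your proposal has two substantive gaps, one in the single-scale estimate and one in the extension to the unrestricted operator.

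\textbf{The Fourier-decay/$TT^\ast$ approach for part (b) does not give the stated range.} A singular measure on $\rr$ can never satisfy $|\widehat\mu(\xi)|\lesssim |\xi|^{-a}$ with $a>\tfrac12$, so your claimed decay exponent $(1-\epsilon)/2$ is strictly below the threshold at which the Rubio de Francia--type $TT^\ast$ argument yields a maximal estimate. The difficulty in passing from a fixed-$r$ $L^2$ bound to a bound on the supremum in $r$ is precisely what the condition $a>\tfrac12$ buys (e.g.\ via Sobolev embedding in the $r$ variable), and it is not available here. The paper is explicit that its proof does \emph{not} proceed via Fourier decay; instead it linearizes and discretizes $\calm_k$, dualizes to $\Phi_k^\ast$, and bounds $\|\Phi_k^\ast\mathbf 1_\Omega\|_n^n$ by expanding into an integral of $n$-fold products $\prod_\ell \sigma_k((z-c_\ell)/r_\ell)$. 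The key input is a \emph{transverse correlation condition} on these $n$-fold correlations, proved for a random construction via Azuma's inequality after organizing the sum as a martingale. This is a higher-order (combinatorial) uniformity condition, strictly stronger than Fourier decay; the paper notes that even when the correlation condition is used to deduce a Fourier bound, the resulting exponent (e.g.\ $a<\tfrac18$ in the dimension-$1$ case) is far too weak to recover the maximal estimates by the route you describe.

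\textbf{The geometric-series argument for part (c) is incorrect.} Rescaling the single-scale bound to $r\in[2^{-m},2^{-m+1}]$ produces exactly the factor $2^{ma}$, so inserting $r^a$ makes the contribution from each dyadic scale of \emph{uniform} size, not geometrically decaying; there is nothing to sum. (And for $p=q$ you have $a=0$, so no improvement at all.) The paper handles this by a Haar decomposition $f=\mathbb E_m f+\sum_{s\ge 0}\Delta_{s+m}f$ at each scale $m$, shows that the restricted maximal operator applied to $\Delta_{s+m}f$ gains a factor $2^{-\eta\sqrt s}$ (this uses the exponential decay of $\|\calm_k\|_{p\to q}$ in $k$, not merely boundedness), and then combines scales via Littlewood--Paley square-function estimates. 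For $q\ge 2$ this is fairly direct; for $q<2$ an additional bootstrap/interpolation argument (due to Seeger) is needed. Your one-line reduction skips all of this machinery and, as written, does not work.
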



\noindent\textit{Remarks.}  
\begin{enumerate}[1.]
\item \label{remark1}
It is possible to use the ideas of \cite{lp} to
modify the construction of the sequence of sets $S_k$ so that,
in addition to all the conclusions of
Theorems \ref{thm-main} and \ref{thm-main3}, the limiting set 
$S=\bigcap_{k=1}^\infty S_k$ is a {\em Salem set}.    
See \S\ref{fourier} for the definitions and more details.

\item It may be of greater interest that the correlation condition
(\ref{transverse-cond}) used to prove Theorems \ref{thm-main} and \ref{thm-main3} 
already implies that $S$
has positive {\em Fourier dimension}, provided that the $\epsilon$ in Theorem
\ref{thm-main3} is small enough ($\epsilon<\frac{1}{5}$ will suffice).
We hope to address this issue at length in a subsequent paper.

\item An argument due to David Preiss, included here in Subsection \ref{preiss-example},
shows that Theorem \ref{AP-diff} (hence also Theorem \ref{thm-main}(e)) 
cannot hold with $p=1$.  On the other hand, we do not know whether the range of 
$\epsilon$ or the exponents $p,q$ in  
Theorem \ref{thm-main3} is optimal.  

\end{enumerate}

\subsection{Motivation}
The motivation for the study of the maximal operators introduced in this article comes from
two different directions.  On the one hand, our maximal operators provide a 
one-dimensional analogue of higher dimensional Euclidean phenomena that have
been studied extensively in harmonic analysis in the context of hypersurfaces
and singular measures on $\rr^d$.  On the other hand, they arise naturally in the
consideration of density and differentiation theorems
for averages on sparse sets. We describe these below.     

\subsubsection{Analogues of maximal operators over submanifolds of $\rr^d$} 
\label{sphere-sub}
There is a vast literature on maximal and averaging operators over families of
lower-dimensional submanifolds of $\rr^d$. We will focus here on the case of 
maximal operators over rescaled copies of a single submanifold. 
Assuming that the submanifold in question is sufficiently smooth, 
the main issue turns out to be the curvature: roughly speaking, curved submanifolds admit
nontrivial maximal estimates, whereas flat submanifolds do not. 
A fundamental and representative positive result is the \textit{spherical
maximal theorem}, due to E.M. Stein \cite{stein-76} for $d\geq 3$ and Bourgain
\cite{bourg-86} for $d=2$.  We state it here for future reference.

\begin{theorem}\label{thm-spherical} (Stein \cite{stein-76}, Bourgain \cite{bourg-86})
Let $d\geq 2$. Recall the spherical maximal operator in $\rr^d$:
\begin{equation}\label{ave-e21}
\tilde{\mathfrak M}_{\mathbb S^{d-1}}f(x)=\sup_{r>0}\int_{\mathbb S^{d-1}}|f(x+ry)|d\sigma(y),
\end{equation}
where $\sigma$ is the normalized Lebesgue measure on the unit sphere
$\mathbb S^{d-1}$.  Then
$\tilde{\mathfrak M}_{\mathbb S^{d-1}}$ is bounded on $L^p(\rr^d)$ for 
$p>\textstyle{\frac{d}{d-1}}$,
and this range of $p$ is optimal.
\end{theorem}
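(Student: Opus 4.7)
The plan is to prove the positive bound by a scale-localization plus Fourier/$L^2$ argument and to verify sharpness with a single homogeneous counterexample. In analogy with the reduction from $\tilde{\calm}$ to $\calm$ indicated in Theorem~\ref{thm-main}, I would first restrict to $r \in [1,2]$ and then invoke a Littlewood--Paley decomposition $f = \sum_j f_j$: because $\widehat{f_j \ast \sigma_r}(\xi) = \hat f_j(\xi)\hat\sigma(r\xi)$ is essentially nontrivial only when $r|\xi| \sim 1$, each Littlewood--Paley piece ``sees'' the restricted maximal operator at one dyadic scale, and the pieces can be summed using the Littlewood--Paley square function whenever the underlying restricted estimate holds for $p > d/(d-1)$.

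For $d \geq 3$, the key input is the Fourier decay $|\widehat{\sigma}(\xi)| \lesssim (1+|\xi|)^{-(d-1)/2}$, coming from stationary phase / Bessel asymptotics. Stein's square-function identity
\[
\sup_{1 \leq r \leq 2} |f \ast \sigma_r(x)|^2 \leq |f \ast \sigma_1(x)|^2 + 2 \left( \int_1^2 |f \ast \sigma_s(x)|^2 \, ds \right)^{\!1/2} \left( \int_1^2 |f \ast \partial_s \sigma_s(x)|^2 \, ds \right)^{\!1/2}
\]
followed by Plancherel in $x$ and integration in $s$ yields $L^2$-boundedness of the restricted maximal operator whenever $d-1 > 1$, i.e., $d \geq 3$. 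Interpolation with the trivial $L^\infty$ estimate then covers $p \geq 2$, and Stein's analytic-interpolation trick, applied to the holomorphic family $\sigma_r^z := \sigma_r \ast c_z |x|^{z-d}$ of Riesz-modified measures, extends the range down to any $p > d/(d-1)$.

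The hardest case is $d = 2$, where the Fourier decay $|\widehat{\sigma}(\xi)| \lesssim |\xi|^{-1/2}$ is critical and the square-function argument loses a logarithm at every scale, making summation over scales impossible. Here I would follow Bourgain and replace the pure Fourier estimate by a mixed geometric/analytic bound at scale $\delta$: decompose $\sigma$ into $O(\delta^{-1/2})$ arcs of length $\delta^{1/2}$, estimate the contribution of each ``cap'' by its own (much better) Fourier decay, and control the overlap between distinct caps through the geometric transversality fact that any two distinct $\delta$-annuli of radius $\approx 1$ centered at different points can be tangent along at most $O(1)$ arcs. This produces a nontrivial $L^p \to L^q$ gain with $q > p$ for the single-scale operator, and interpolating this gain with the $L^2$ Plancherel bound closes the range $p > 2 = d/(d-1)$. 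This step is the main obstacle and occupies most of Bourgain's original paper.

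For the sharpness, I would test against $f(x) = |x|^{-(d-1)} \bigl(\log(2/|x|)\bigr)^{-1} \one_{|x| \leq 1/2}$. A polar-coordinate computation shows $f \in L^p(\rr^d)$ for all $p \leq d/(d-1)$. On the other hand, whenever $|x| \leq 1/4$, setting $r = |x|$ forces the sphere $\{x + ry : |y|=1\}$ to pass through the origin; since this sphere is locally a $(d-1)$-plane near $0$, the spherical average reduces to a multiple of $\int_0^\epsilon \rho^{-1} (\log 1/\rho)^{-1} \, d\rho = \infty$. Hence $\tilde{\mathfrak M}_{\mathbb S^{d-1}} f \equiv \infty$ on a set of positive measure, ruling out any $L^p$ bound in the range $p \leq d/(d-1)$.
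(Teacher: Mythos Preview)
The paper does not give its own proof of Theorem~\ref{thm-spherical}; it is quoted as a background result with citations to Stein~\cite{stein-76} and Bourgain~\cite{bourg-86}, and the paper only \emph{describes} Bourgain's argument informally in \S\ref{proofs-outline} as motivation for its own correlation-condition strategy. So there is nothing in the paper to compare your proof against line by line.

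That said, your sketch is a reasonable outline of the classical arguments, with one point worth flagging. For $d\geq 3$ your account (Fourier decay, Sobolev-type square-function bound, analytic interpolation with the Riesz family $\sigma^z$) matches Stein's original proof. For $d=2$, however, the mechanism you describe---a cap decomposition of $\sigma$ into $O(\delta^{-1/2})$ arcs with improved Fourier decay on each cap, followed by overlap control---is closer in spirit to later local-smoothing / square-function approaches than to Bourgain's own proof. The paper's summary of Bourgain in \S\ref{proofs-outline} emphasizes a different geometric dichotomy: one expands $\|\Phi_k^\ast \mathbf 1_\Omega\|_2^2$ as a double integral over pairs of $\delta$-annuli, observes that \emph{transverse} intersections have area $\lesssim \delta^2$ (giving a good $L^2$ bound), while \emph{internally tangent} pairs can have much larger intersection but occupy a small portion of $\Omega\times\Omega$ (giving a good $L^1$ bound on that region), and interpolates. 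This transverse/tangent splitting is exactly the template the paper generalizes to its Cantor sets via the correlation condition~(\ref{transverse-cond}), so if you want your sketch to connect to the rest of the paper you should phrase the $d=2$ step in those terms rather than via caps.

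Your sharpness example is correct: $f(x)=|x|^{-(d-1)}(\log(2/|x|))^{-1}\one_{|x|\leq 1/2}$ lies in $L^{d/(d-1)}$ (the radial integral reduces to $\int u^{-d/(d-1)}\,du$, which converges), while choosing $r=|x|$ makes the sphere pass through the origin and the spherical average diverges like $\int_0^\epsilon \rho^{-1}(\log 1/\rho)^{-1}\,d\rho$.
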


It is not difficult to see that Theorem \ref{thm-spherical} fails in all dimensions 
$d\geq 2$ if the sphere $S^{d-1}$ is replaced by a polygonal line or surface. The latter, while 
still piecewise smooth, do not have any curvature. In intermediate cases, such as 
conical surfaces which are flat along the light rays but curved in other directions, maximal 
estimates may still be available but with weaker exponents. 
Many results of this type are known under varying
smoothness and curvature conditions.  We refer the reader to \cite{stein-ha},
\cite{CNSW}, \cite{phong-stein1}, \cite{phong-stein2} for an introduction to this area of 
research and further references.

Stein's proof of the spherical maximal theorem for $d\geq 3$, as well as most of the other results
just mentioned, exploit curvature via the decay of the Fourier transform of the surface measure 
on the manifold. In the case of the sphere, the Fourier transform decays like $|\xi|^{-\frac{d-1}{2}}$
at infinity; similar estimates hold for other convex hypersurfaces of codimension 1 with non-vanishing 
Gaussian curvature. The decay estimates are weaker for manifolds with flat directions, which 
is reflected in the range of exponents in maximal and averaging estimates.

The Fourier decay arguments are not sufficient for this purpose in dimension 2,
essentially because the decay is not fast enough. Instead, Bourgain's 
proof of the circular maximal theorem for $d=2$ relies more directly on the geometry involved. 
The relevant geometric information concerns intersections of pairs of thin annuli.
In an arrangement of many such annuli, most pairwise intersections are smaller by an order
of magnitude than each annulus itself; while larger intersections are possible, it can be shown that
they do not occur frequently. (For more details, see Subsection \ref{proofs-outline}.)

No similar theory has been developed so far in one dimension.  Indeed, it is not clear 
\textit{a priori} what such a theory might look like, given that the real line has no nontrivial 
lower-dimensional submanifolds. However, given any $\epsilon > 0$, there are
many singular measures on $\mathbb R$ supported on sets of Hausdorff
dimension $1 - \epsilon$. Viewing $\epsilon$ as an analogue of ``codimension'',
it is natural to ask whether by imposing additional structure on these sets that
would assume the role of curvature, one might obtain $L^p$ estimates similar to those in Theorem
\ref{thm-spherical} for the associated maximal operators and for
a range $p > p_{\epsilon}$, where $p_{\epsilon} \searrow 1$ as $\epsilon \rightarrow 0$.
Theorem \ref{thm-main3} provides an affirmative answer to this question. Theorem \ref{thm-main} may be interpreted as the limiting situation as $\epsilon \rightarrow 0$ (compare with Theorem \ref{thm-spherical} as $d \rightarrow \infty$) where the maximal range $(1, \infty]$ of $p$ is achieved for a single set $S$ of zero Lebesgue measure.       

It turns out that the proofs of Theorems \ref{thm-main} and \ref{thm-main3} do not use
any Fourier decay conditions.  Instead, our proofs rely on
geometrical arguments more akin to those in Bourgain's proof of the circular maximal
theorem. The right substitute for the Fourier decay (see (\ref{e-rdf}) below)  turns out to be
the correlation condition (\ref{transverse-cond}), providing the needed bound on 
the size of multiple intersections analogous to those arising in Bourgain's argument. Readers
familiar with the proof of Theorem \ref{thm-spherical} for $d=2$ or other similar results
will recognize the correlation condition as a bound on the integrand (interpreted
as the correlation function) in the expression for the $L^n$ norm of the dual
linearized and discretized maximal operator.
The goal will be to minimize this integrand whenever possible by
employing randomization arguments.

\subsubsection{Maximal averages via Fourier decay estimates}\label{fourier}
We now turn to the study of maximal operators $\tilde{\mathfrak M}$ 
defined as in (\ref{max-e1}) with $\mu$ obeying appropriate Fourier decay conditions.
It turns out that such conditions alone may often be substituted for the geometric assumptions 
of \S\ref{sphere-sub} (see e.g. \cite{duo-rdf86}, \cite{rdf} and the references therein). 
From this perspective, our result may be viewed as an extension of the following result by Rubio de Francia \cite{rdf}.  
We write $\widehat{\mu}(\xi)=\int e^{-2\pi i\xi x}d\mu(x)$.

\begin{theorem}(Rubio de Francia \cite{rdf})
Suppose that $\sigma$ is a compactly supported Borel measure on $\mathbb R^d$, $d \geq 1$, such that 
\begin{equation}\label{e-rdf}
|\widehat{\sigma}(\xi)| \leq C(1 + |\xi|)^{-a} 
\end{equation}
for some $a>\half$.  Then the maximal operator $\tilde{\mathfrak M}_{\sigma}$, defined as in (\ref{max-e1}) but with
$\mu$ replaced by $\sigma$, is bounded on $L^p(\mathbb R^d)$ for $p > (2a+1)/(2a)$.   
\label{thm-rdf}\end{theorem}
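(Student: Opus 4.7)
The plan is to follow the frequency-decomposition strategy that Stein introduced for the spherical maximal theorem and Rubio de Francia adapted to the setting of generic Fourier decay. Fix a smooth annular bump $\psi$ supported in $\{1/2 \leq |\xi| \leq 2\}$ with $\psi_0(\xi) + \sum_{k \geq 1} \psi(2^{-k}\xi) \equiv 1$, where $\psi_0$ is a compactly supported bump near the origin, and decompose $\sigma = \sigma_0 + \sum_{k \geq 1} \sigma_k$ via $\widehat{\sigma_k}(\xi) = \widehat\sigma(\xi)\psi(2^{-k}\xi)$. Since $\widehat\sigma$ is $C^\infty$ (from the compact support of $\sigma$) and $\psi_0$ is $C_c^\infty$, the piece $\sigma_0$ is a Schwartz function, so $\tilde{\mathfrak M}_{\sigma_0}$ is pointwise dominated by the Hardy--Littlewood maximal operator and bounded on every $L^p$ with $p > 1$. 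The task reduces to establishing a geometrically decaying $L^p$ bound for each $\tilde{\mathfrak M}_{\sigma_k}$, $k \geq 1$.

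\textbf{The $L^2$ estimate.} The heart of the argument is the bound $\|\tilde{\mathfrak M}_{\sigma_k} f\|_2 \leq C 2^{k(1/2-a)} \|f\|_2$, summable in $k$ precisely when $a > 1/2$. To prove it I would invoke Stein's ``$g$-function'' identity: for Schwartz $f$, set $F_k(x, r) := \int f(x + ry)\, d\sigma_k(y)$. Since $\widehat{\sigma_k}$ vanishes near $\xi = 0$, $F_k(x, r) \to 0$ as $r \to \infty$, and the fundamental theorem of calculus gives
\begin{equation*}
\sup_{r > 0}|F_k(x, r)|^2 \leq 2\int_0^\infty |F_k(x, s)|\, |\partial_s F_k(x, s)|\, ds.
\end{equation*}
Applying Cauchy--Schwarz, integrating in $x$, and using Plancherel, the task is reduced to controlling, uniformly in $\xi$, the two quantities
\begin{equation*}
\int_0^\infty |\widehat{\sigma_k}(s\xi)|^2 \frac{ds}{s} \qquad \text{and} \qquad \int_0^\infty |s\xi \cdot \nabla \widehat{\sigma_k}(s\xi)|^2 \frac{ds}{s}.
\end{equation*}
For fixed $\xi$ both inner integrands are supported on a $\log$-interval of unit length where $s|\xi| \asymp 2^k$. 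On that range the hypothesis gives $|\widehat{\sigma_k}(s\xi)| \leq C 2^{-ka}$, and a Paley--Wiener/Cauchy argument (promoting the real-line decay of $\widehat\sigma$ to a matching bound on a complex tube of bounded width and then applying Cauchy's estimate) yields the companion bound $|\nabla \widehat{\sigma_k}(s\xi)| \leq C 2^{-ka}$. The two integrals are then $O(2^{-2ka})$ and $O(2^{2k(1-a)})$ respectively, and their Cauchy--Schwarz geometric mean delivers the claimed $L^2$ estimate.

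\textbf{Interpolation and summation.} To extend the bound from $L^2$ to the full range $p > (2a+1)/(2a)$, I would interpolate the sharp $L^2$ estimate against a coarser $L^p$ estimate on each $\tilde{\mathfrak M}_{\sigma_k}$, obtained by dominating the kernel $\sigma_k$ by a radial majorant of controlled $L^1$-norm and invoking the Hardy--Littlewood maximal inequality. Interpolation (real interpolation between the crude and the sharp estimates, or, following Stein, complex interpolation applied to an analytic family of auxiliary operators as in the spherical case) produces $\|\tilde{\mathfrak M}_{\sigma_k}\|_{L^p \to L^p} \leq C 2^{k\beta(p)}$ with $\beta(p) < 0$ throughout the announced range. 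Summing the geometric series in $k$ and combining with the Hardy--Littlewood bound for $\sigma_0$ completes the proof.

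\textbf{Main obstacle.} The decisive technical point is the derivative bound $|\nabla \widehat{\sigma_k}| \leq C 2^{-ka}$ on the relevant annulus. This does not follow from the pointwise hypothesis alone---$|\widehat\sigma| \leq C(1+|\xi|)^{-a}$ on $\rr^d$ does not a priori descend to its gradient---but it does follow once we exploit that $\widehat\sigma$ is entire of exponential type (Paley--Wiener, using the compact support of $\sigma$). Phragm\'en--Lindel\"of extends the real-line decay to a complex tube of bounded width, from which Cauchy's estimate extracts the desired derivative bound. Once this input is secured, the rest of the proof consists of standard $g$-function manipulations, interpolation, and summation of a geometric series.
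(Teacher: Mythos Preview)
The paper does not prove this theorem: it is quoted as background from Rubio de Francia \cite{rdf}, so there is no in-paper argument to compare against. Your sketch is the standard Stein--Rubio de Francia proof and is sound in outline; two points deserve sharpening.

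\textbf{The derivative bound.} You correctly isolate $|\nabla\widehat\sigma(\xi)|\lesssim (1+|\xi|)^{-a}$ as the crux, but invoking Phragm\'en--Lindel\"of to push the decay into a complex tube is more delicate than your one line suggests (the three-lines lemma on a strip sees only the sup bound on the far boundary, not the decay). The clean and standard device is to exploit compact support differently: pick $\chi\in C_c^\infty$ with $\chi\equiv 1$ on $\supp\sigma$, so that $\sigma=\chi\sigma$ and hence $\widehat\sigma=\widehat\chi*\widehat\sigma$. Then $\nabla\widehat\sigma=(\nabla\widehat\chi)*\widehat\sigma$, and since $\nabla\widehat\chi$ is Schwartz the convolution inherits the $(1+|\xi|)^{-a}$ decay directly. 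This also yields the tube bound if you want it: $\widehat\sigma(\xi+it)$ is the Fourier transform of $e^{2\pi t\cdot x}\chi(x)\,d\sigma$, and $e^{2\pi t\cdot x}\chi(x)$ is still uniformly $C_c^\infty$ for $|t|\le 1$.

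\textbf{The interpolation endpoint.} Your ``radial majorant of controlled $L^1$-norm'' step needs care in dimension $d\ge 2$. The least radial decreasing majorant of $\sigma_k=\sigma*\check\psi_k$ has $L^1$-norm governed by $\|\sigma_k\|_\infty$, which for a generic compactly supported $\sigma$ is of order $2^{k(d-a)}$ (from $\|\widehat{\sigma_k}\|_1$) or $2^{kd}$ (from the trivial bound). Interpolating either of these against the $L^2$ bound gives a range that depends on $d$ and falls short of the stated dimension-free $p>(2a+1)/(2a)$ when $d\ge 2$. The analytic-family route you mention as an alternative is what actually delivers the dimension-free range in Rubio de Francia's argument; the radial-majorant shortcut suffices only for $d=1$ (which is, of course, the case the present paper cares about).
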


Theorem \ref{thm-rdf} implies Theorem \ref{thm-spherical} for $d \geq 3$, 
since then the surface measure $\sigma$ on the sphere obeys the above assumption with
$a = \frac{d-1}{2} > \frac{1}{2}$, but it fails to capture the circular maximal estimate in $\rr^2$ 
for which $a = \frac{1}{2}$ just misses the stated range. We also observe that the range 
of $p$ in Theorem \ref{thm-rdf} is independent of the dimension $d$; rather, it is given in 
terms of the Fourier decay exponent $a$. 
 
It is not possible for a singular measure $\sigma$ on $\rr$ to obey (\ref{e-rdf}) with 
$a>\half$ (see \cite{salem}).  In particular, Theorem \ref{thm-rdf} does not apply in 
this case.
On the other hand, there are many such measures obeying (\ref{e-rdf}) with a smaller exponent.  
Recall that the \textit{Fourier dimension} of a compact set $S\subset\rr$ is defined by
$$
\dim_{\mathbb F}(S)=\sup\{0\leq \beta\leq 1:\ \exists\hbox{ a probability measure $\sigma$ 
supported on $S$}$$
\vskip-8mm
$$
\hbox{ such that }|\widehat{\sigma}(\xi)|\leq C(1+|\xi|)^{-\beta/2}\hbox{ for all }\xi\in\rr \}.
$$
It is well known  that 
$\dim_{\mathbb F}(S)\leq\dim_{\mathbb H}(S)$ for all compact $S\subset \rr$,
and that the inequality is often strict (\cite{mattila-book}, \cite{falconer}).  However,
there are also many examples of sets with $\dim_{\mathbb F}(S)=\dim_{\mathbb H}(S)$,
see e.g.  \cite{salem},  \cite{kaufman},  \cite{bluhm-1}, \cite{bluhm-2}, \cite{kahane},  \cite{lp}.  
Such sets are known as \textit{Salem sets}.  
It is of interest to ask whether there is an analogue of Theorem \ref{thm-rdf}  
that might apply to singular measures supported on Salem sets and obeying (\ref{e-rdf}), 
possibly with additional assumptions.

We have already mentioned that our proofs of Theorems \ref{thm-main} and \ref{thm-main3} 
do not rely explicitly on any Fourier decay conditions, invoking the correlation condition
(\ref{transverse-cond}) instead. There is, however, an indirect connection which may be 
observed as follows. The condition (\ref{transverse-cond}) may be
viewed in terms similar to those of additive combinatorics:
if (\ref{e-rdf}) is interpreted as indicative of
\textit{linear uniformity} of $S$ (see \cite{lp}), then (\ref{transverse-cond}) 
is closer to higher-order uniformity conditions in additive
combinatorics (cf. \cite{gowers-szem}, \cite{gt-1}).  The latter are known to imply, and in fact
be strictly stronger than, Fourier-analytic estimates.  A continuous version of a standard calculation
in additive combinatorics shows that the correlation condition (\ref{transverse-cond})
implies Fourier decay estimates of the form (\ref{e-rdf}). In particular, it follows that
the sets we construct must have positive Fourier dimension provided that the $\epsilon$ 
in Theorem \ref{thm-main3} is sufficiently small ($\epsilon<\frac{1}{5}$ will do).
The rate of decay
obtained in this manner is not necessarily optimal.  In the case of the set $S$ of dimension $1$
given by Theorem \ref{thm-main}, our current methods yield (\ref{e-rdf}) for all $a<\frac{1}{8}$,
whereas the optimal range would be $a\leq\half$.  Note that the range of $p$ in Theorems
\ref{thm-main} and \ref{thm-main3} is better than what would follow from the numerology of
Theorem \ref{thm-rdf} with that value of $a$. We do not know
whether it is possible to prove maximal estimates such as those in Theorems \ref{thm-main}
or \ref{thm-main3} based solely on Fourier decay with $a<\half$.

With some additional effort, it is possible to construct sequences of
sets $S_k$ obeying all conditions of Theorems \ref{thm-main} and \ref{thm-main3}, 
respectively, such that $S$ is also a Salem set.  This can be done 
by adding the
appropriate Fourier-analytic conditions to Theorem \ref{random-mainprop} and proving
them along the same lines as in \cite[Section 6]{lp}. However, the optimal Fourier decay is not
needed in the proofs of any of our theorems.

\subsubsection{Density theorems and differentiation of integrals} \label{Aversa-Preiss-description}

There are several natural questions concerning density 
and differentiation theorems in one dimension 
that suggest the directions we pursue here.
We do not attempt to survey the vast literature on density theorems and differentiation
of integrals (see
\cite{bruckner}, \cite{de-guzman} for more information) and focus only on the
specific problems relevant to the present discussion.

The following question was raised and investigated by Preiss \cite{preiss} and Aversa-Preiss
\cite{AP1}, \cite{AP2}: to what extent can the Lebesgue density theorem be 
viewed as  ``canonical" in $\rr$, in the sense that any other density theorem that
takes into account the affine structure of the reals must follow from the Lebesgue
density theorem?  

Let us clarify and motivate this statement.  Consider a family $\cals$ of measurable 
subsets of $\rr$.  We will say that $\cals$ has the \textit{translational density property}
if for every measurable set $E\subset\rr$ we have
\begin{equation}\label{e-density}
\lim_{S\in\cals, \diam(S\cup\{0\})\to 0} \frac{|(x+S)\cap E|}{|S|}=1\hbox{ for a.e. }x\in E.
\end{equation}
Here and below, we use $x+S$ to denote the translated set 
$\{x+y:\  y\in S\}$. 

It follows from the Lebesgue density theorem that the collection of intervals $\{(-r,r): r>0\}$ has this property.  A moment's thought shows that collections such as $\{(0,r):\ r>0\}$
or $\{(\frac{r}{2},r):\ r>0\}$ also have it, simply because the intervals in question
occupy at least a fixed positive proportion of $(-r,r)$.

Consider now the family of intervals $\cals=\{I_k\}_{k=1}^\infty$, where 
$I_k=(\frac{k}{(k+1)!}, \frac{1}{k!})$.  We have $|I_k|=\frac{1}{(k+1)!}$ and 
$\diam(I_k\cup\{0\})=\frac{1}{k!}$, hence the last argument no longer applies.
In other words, the Lebesgue density theorem does not imply any density properties
of $\cals$.
Nonetheless, $\cals$ does have the translational density property, courtesy of
the \textit{hearts density theorem} of Preiss \cite{preiss} and Aversa-Preiss \cite{AP1}
(see also \cite{csornyei} for an alternative proof).

The collection $\cals$ in the last example does not generate an
\textit{affine invariant density system}:
if we let $I_k=(\frac{k}{(k+1)!}, \frac{1}{k!})$ as before and define $\cals'=\{rS_k:\ r>0,k\in\nn\}$,
then (\ref{e-density}) does not hold with $\cals$ replaced by $\cals'$.
(Note that the limit in (\ref{e-density}) is now being taken
over the two parameters $k$ and $r$.)  In fact, Aversa-Preiss prove in \cite{AP1} that no 
sequence of intervals $I_k$ can generate an affine invariant density 
system unless $\lim\inf_{k\to\infty}|I_k|/\diam(I_k \cup\{0\})
>0$, in which case the density property in question follows  from the Lebesgue theorem
as explained above.  

On the other hand, if we drop the requirement that $\cals$ be
a family of intervals, it is possible for $\cals$ to generate an affine invariant density system
independently of the Lebesgue density theorem.
This was announced by Aversa and Preiss in \cite{AP1} and proved in \cite{AP2}.

\begin{theorem}\label{AP-density} 
(Aversa-Preiss \cite{AP1}, \cite{AP2})
There is a sequence $\{S_k\}$ of compact sets of positive measure such that $|S_k|\to 0$ and:

(a)  $0$ is a Lebesgue density point for $\rr\setminus\bigcup S_k$, and in particular
we have
$$\lim_{n\to\infty} \frac{|S_k|}{\diam(S_k\cup\{0\})}=0;$$

(b) the family $\{rS_k:\ r>0,k\in\nn\}$ has the affine density property.
\end{theorem}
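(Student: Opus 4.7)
The plan is to derive Theorem \ref{AP-density} as a direct corollary of Theorem \ref{AP-diff}, applied to indicator functions. The sequence $\{S_k\}$ exhibited is precisely the one produced by Theorem \ref{thm-main}: each $S_k$ lies in $[1,2]$, is a disjoint union of finitely many closed intervals (hence a compact set of positive measure), and $|S_k|\searrow 0$.

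Condition (a) is then immediate. Because $\bigcup_k S_k\subset[1,2]$, the open interval $(-1,1)$ is disjoint from every $S_k$, so $0$ is an interior point of the complement and a fortiori a Lebesgue density point; the bound $|S_k|/\diam(S_k\cup\{0\})\leq|S_k|/1\to 0$ follows since $\diam(S_k\cup\{0\})\geq 1$.

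For condition (b), fix a measurable set $E\subset\rr$. Localizing if necessary (the property is local, so we may replace $E$ by $E\cap[-N,N]$), we have $\one_E\in L^p(\rr)$ for some $p\in(1,\infty)$, and (\ref{diff-e10}) yields, for a.e.\ $x\in E$,
\begin{equation*}
\lim_{r\to 0}\sup_k\left|\frac{|(x+rS_k)\cap E|}{r|S_k|}-1\right|=0.
\end{equation*}
Because every $S_k\subset[1,2]$, we have $r\leq\diam(rS_k\cup\{0\})\leq 2r$, so the limit $\diam(rS_k\cup\{0\})\to 0$ appearing in the affine density property is equivalent to $r\to 0$ uniformly in $k$. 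The supremum over $k$ in the displayed limit then gives exactly (\ref{e-density}) for $\cals'=\{rS_k:r>0,k\in\nn\}$.

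The main obstacle is therefore not in this short deduction but in the availability of Theorem \ref{AP-diff}, which in turn rests on the maximal estimates of Theorem \ref{thm-main}(e). Passing from $L^p$-boundedness of $\tilde{\calm}$ to the pointwise statement (\ref{diff-e10}) is standard: a.e.\ convergence on the dense subclass $C_c(\rr)$ is trivial since $x+rS_k\subset[x+r,x+2r]$ collapses to $\{x\}$, and the maximal inequality extends this to $L^p$ by the usual density and weak-type argument. The genuinely hard step — constructing Cantor-type sets whose multiscale correlation structure forces the maximal operator to be bounded — is encapsulated in Theorem \ref{thm-main} and requires the randomized Cantor construction together with the correlation bound (\ref{transverse-cond}), in the spirit of Bourgain's proof of the circular maximal theorem.
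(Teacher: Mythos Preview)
The paper does not give its own proof of Theorem \ref{AP-density}: it is stated as a result of Aversa and Preiss and attributed to \cite{AP1}, \cite{AP2}. What the paper proves is the stronger Theorem \ref{AP-diff}, and it explicitly remarks (in the paragraph following (\ref{e-diff})) that the differentiation property implies the density property ``by letting $f$ range over characteristic functions of measurable sets.'' Your proposal carries out exactly this implication, and the details you give---the localization $E\mapsto E\cap[-N,N]$ to land in $L^p$, the equivalence $\diam(rS_k\cup\{0\})\to 0 \Leftrightarrow r\to 0$ uniformly in $k$, and the verification of (a) from $S_k\subset[1,2]$---are all correct.

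So your argument is sound, but be aware of what it actually accomplishes: you are recovering Theorem \ref{AP-density} as a corollary of the much deeper Theorem \ref{thm-main}(e), which requires the full randomized Cantor construction and the correlation machinery of Sections \ref{section-maximal}--\ref{section-random}. The original Aversa--Preiss proof in \cite{AP2} is a direct probabilistic construction aimed only at the density property, and does not pass through any $L^p$ maximal inequality. Thus your route is not a comparison with ``the paper's proof'' (there is none here), but rather a confirmation that the paper's main theorems subsume the cited result.
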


This essentially settles the matter for density theorems, except that constructing an
explicit example of sets $S_k$ as in Theorem \ref{AP-density} is still an open
problem.  (The Aversa-Preiss construction is probabilistic, and so is ours below.) However, the analogous question for $L^p$ differentiation theorems remained unanswered.

We will say that $\cals$ 
\textit{differentiates}\footnote{
This is a slight abuse of the standard terminology, which would require us to
say instead that the family $\{S+x\}_{x\in\rr}$ differentiates $L^p_\loc(\rr)$. 
}
$L^p_\loc (\rr)$ for some  $1\leq p\leq\infty$ if for every $f\in L^p_\loc (\rr)$ we have
\begin{equation}\label{e-diff}
\lim_{S\in\cals, \diam(S\cup\{0\})\to 0} \frac{1}{|S|}\int_{x+S}f(y)dy=f(x)\hbox{ for a.e. }x\in \mathbb R.
\end{equation}
For instance, the Lebesgue differentiation theorem states that the collection $\{(-r,r):\ r>0\}$
differentiates $L^1_\loc(\rr)$. Note that the differentiation property (\ref{e-diff}) implies the density property
(\ref{e-density}), by letting $f$ range over characteristic functions of measurable sets.  
There is no reason, though, why the converse implication should automatically hold.

While density theorems (such as Theorem \ref{AP-density} or the hearts density theorem 
mentioned earlier) can often be proved using purely geometrical considerations, differentiation
theorems tend to require additional analytic input, usually in the form of maximal estimates.
A well-known and representative example is provided by the Hardy-Littlewood
maximal theorem \cite{hardy-littlewood2}, \cite{wiener},
which easily implies the Lebesgue differentiation theorem.

Aversa and Preiss conjectured in  \cite{AP2} that their Theorem \ref{AP-density} could be
strengthened to an $L^2$  differentiation theorem. Specifically, there should exist a 
sequence of sets $\{ S_k \}$
as in Theorem \ref{AP-density} such that the family $\{rS_k: \ r>0,\  k\in\nn\}$ differentiates
$L^2(\rr)$ in the sense of (\ref{e-diff}).  Our maximal estimates in Theorem \ref{thm-main}
imply the Aversa-Preiss conjecture along the lines of
the standard Hardy-Littlewood argument.  Our Theorem \ref{AP-diff} is in fact stronger, 
providing a family of sparse sets which differentiates $L^p(\rr)$ for all $p>1$.  Preiss's
argument in Subsection \ref{preiss-example} shows that this range is optimal.


\subsection{Outline of the proofs}
\label{proofs-outline}

The intuition behind the construction in Theorems \ref{thm-main} and \ref{thm-main3} is,
roughly, that such results might hold if the sets $S_k$ (hence also $S$) are
sufficiently randomly distributed throughout the interval $[1,2]$.
Thus the challenge is first to find appropriate pseudorandomness conditions that
guarantee the boundedness of our maximal operators, then to actually construct a family of
sets obeying such conditions.
Our arguments are largely inspired by considerations from multidimensional
harmonic analysis, in particular by Bourgain's proof of the circular maximal theorem 
\cite{bourg-86}. The probabilistic construction of $S_k$ is somewhat similar to that
in \cite[Section 6]{lp}, but significantly more complicated.

The sets $S_k$ will be constructed by randomizing a Cantor-type iteration whose general 
features are described in Section \ref{sec-general-cantor}.
The main task is to prove that $S_k$ may be chosen so that the restricted maximal operator
$\calm$ obeys $L^p\to L^q$ bounds as indicated in Theorems \ref{thm-main} and \ref{thm-main3}.
Once such bounds are available, the corresponding estimates on $\tilde\calm^a$ are 
obtained through the scaling analysis in Section \ref{sec-scales}, and the estimates on
$\mathfrak M$ and $\tilde{\mathfrak M}$ follow automatically provided that the limiting measure
$\mu$ exists.  The differentiation theorems (Theorem \ref{AP-diff} and \ref{thm-main3} 
(\ref{lower-dim-diff}))  are deduced in Section
\ref{AP-section}.

Our analysis of $\calm$ begins with several preliminary reductions carried out in
Section \ref{subsec-lin-disc}. Consider the auxiliary restricted maximal operators
\begin{equation}\label{ferret-e200}
\calm_k f(x)=\sup_{1<t<2} \Big| \int f(x+ty) \sigma_k(y)dy\Big| \ ,
\end{equation}
where
$\sigma_k=\phi_{k+1}-\phi_k$, and $\phi_k$ is the normalized Lebesgue density on $S_k$.
The bulk of the work is to prove appropriate $L^p\to L^q$ bounds on $\calm_k$;
this implies the bounds on $\calm$ upon summation in $k$.
We further replace each $\mathcal M_k$ by its discretized and linearized
counterpart $\Phi_k$, the discretization being in the space of affine transformations. 
By duality and interpolation, the desired $L^p$ estimates on $\Phi_k$ will follow 
from restricted strong-type estimates on the ``dual" operator $\Phi_k^*$.
These reductions are all well known in the harmonic analysis literature, 
even though the details are specific to the problem at hand.  We will follow
the approach of \cite{bourg-86}, \cite{schlag2}, and especially \cite{schlag-thesis}
with relatively minor modifications.

The main part of our argument is to prove the required estimates on $\Phi_k^*$.
Before we describe it in more detail, 
we pause for a moment to recall the analogous part of Bourgain's proof of the circular 
maximal theorem in \cite{bourg-86}.  In his context, the dual 
linearized operator $\Phi_k^*$ 
acting on characteristic functions $g=\one_\Omega$ has the form
$$
\Phi^*_k g(z) = \int_{\Omega} \frac{1}{|E_{x,k}|} \one_{E_{x,k}}(z) dx,
$$
where each $E_{x,k}$ is an annulus of thickness $2^{-k}$ and radius $r_x$ centered at $x$.
(Our setup is actually somewhat different in that our operators have built-in
cancellation: the maximal operators in (\ref{ferret-e200}) and their dual linearized counterparts 
$\Phi_k^*$ involve integration with density $\sigma_k$ rather than $\phi_k$. This distinction
will be important later on, for example we will need bounds on $\Phi_k^*$ that are summable
and not just bounded in $k$,
but we omit it for the purpose of the present discussion.)

The main task is to prove that $\Phi^*_k$ is bounded on $L^{p'}$ with $1\leq p'<2$.
The $L^1$ bound is trivial, and the proof would be complete if we could prove 
a similar bound on $L^2$.  We have
\begin{equation}\label{ferret-e3}
\begin{split}
\| \Phi^*_k g\|_2^2 
&= \int \int_{\Omega\times\Omega} \frac{1}{|E_{x,k}|\, |E_{y,k}| } \one_{E_{x,k}}(z) \one_{E_{y,k}}(z) dx\,dy\,dz\\
&= \int_{\Omega\times\Omega} \frac{1}{|E_{x,k}|\, |E_{y,k}| } |E_{x,k} \cap E_{y,k} | dx\,dy\, .\\
\end{split}
\end{equation}
If we had
\begin{equation}\label{ferret-e1}
|E_{x,k} \cap E_{y,k} | \leq  C_k |E_{x,k}|\, |E_{y,k}|,
\end{equation}
the needed $L^2$ bound would follow.  Unfortunately, (\ref{ferret-e1}) need not always hold. 
Specifically, if the two annuli are ``internally tangent" in a clamshell configuration, 
the area of the intersection on the left side of (\ref{ferret-e1}) can easily 
be much larger than $ |E_{x,k}|\, |E_{y,k}|\approx 2^{-2k}$.

Bourgain's key observation is that geometric considerations put a strict
limit on the size of the set of pairs $(x, y)\in\Omega^2$ for 
which the associated annuli are internally tangent. 
The remaining generic (or \textit{transverse})
intersections do have reduced area.  This allows
him to split the region of integration in two parts.  One of them involves only transverse 
intersections, hence there is a good $L^2$ bound as described above. The other part 
covers the internal tangencies; here the $L^2$ estimates are poor, but on the other hand
the $L^1$ estimates can be improved thanks to the small size of the region.  An interpolation
argument completes the proof.

Let us now try to apply a similar argument in our setting, with $p$ restricted for now
to the range $(2,\infty]$ so that $1\leq p'< 2$.  As in Bourgain's proof, the restricted weak $L^2$ bounds
for $\Phi^*_k$ are based on estimates on the size of the double intersections $(x+rS_k)\cap (y+sS_k)$ via
the appropriate analogue of (\ref{ferret-e3}).  
While we still expect that generic double intersections should be significantly smaller than
$|S_k|$, the task of actually estimating them turns out to be quite hard, due to the interplay
between the different scales in the Cantor iteration.  

To illustrate the problem, we consider the following somewhat simplified setting.  Suppose that
the $k$-th iteration $S_k$ of the Cantor set is given.  Subdivide each of the intervals of $S_k$
into $N_{k+1}$ subintervals of equal length, and choose $N_{k+1}^{1-\epsilon}$ of them within each
interval of $S_{k}$.  Given the translation and dilation parameters $x,y,r,s$, what is
the size of $(x+rS_{k+1})\cap (y+sS_{k+1})$?

We write the intersection in question as a union of sets 
\begin{equation}\label{ferret-e6}
(x+r(I\cap S_{k+1}))\cap (y+s(J\cap S_{k+1})),
\end{equation}
where $I$ and $J$ range over all intervals of $S_{k}$.  If $I\neq J$, the $S_{k+1}$-subintervals 
of $I$ and $J$ were chosen independently, hence (\ref{ferret-e6}) is expected to consist of
about $N_{k+1}^{1-2\epsilon}$ such subintervals.  In other words, we expect a substantial gain compared to the
size of each of the sets $I\cap S_{k+1}$ and $J\cap S_{k+1}$. On the other hand, this argument
does not apply to (\ref{ferret-e6}) with $I=J$, where we cannot expect to do better than
the trivial bound.

Following Bourgain, we will refer to the first type of intersections ((\ref{ferret-e6}) 
with $I\neq J$) as \textit{transverse intersections}, and to the second type (with $I=J$) 
as \textit{internal tangencies}. At each step $k$ of the iteration, a typical intersection 
of two affine copies of $S_k$ will consist of both transverse intersections and internal
tangencies.  If there are few internal tangencies, we expect an overall gain as described 
above. If on the other hand there are many internal tangencies, a geometrical argument 
shows that both $|x-y|$ and $|r-s|$ must be small relative to the current scale, 
which in turn restricts the relevant 
domain of $(x,y)$.  As in Bourgain's proof, we are able to combine these two observations 
to prove the desired maximal bound. To extend our bounds to $1<p\leq 2$ (hence 
$2\leq p'<\infty$), we consider the $L^n$ analogues 
of (\ref{ferret-e3}) which involve $n$-fold intersections of affine copies of $S_k$.  

The precise statement of the intersection bound we need is given by the \textit{transverse
correlation condition} (\ref{transverse-cond}). 
In Section \ref{section-maximal} we formulate the correlation condition and prove that it 
does indeed guarantee a restricted strong type estimate on $\Phi^*_k$. The correlation 
condition (\ref{transverse-cond}) may be viewed as a multiscale analogue of the higher order
uniformity conditions in additive combinatorics, see e.g. \cite{gowers-szem}, \cite{gt-1}.
It appears to be stronger than the pseudorandomness conditions considered so far in the
literature, due to the inclusion of the dilation factor and the interplay between different scales.

The random construction of sets $S_k$ obeying our correlation condition is carried
out in Section \ref{section-random}. This part of the proof contains the bulk of the technical
work and requires the full strength of our probabilistic machinery. The procedure
is based on a Cantor-type iteration as described in Section \ref{sec-general-cantor}, but 
now each $S_k$ is randomized subject to appropriate constraints on the parameters.
We then use
large deviation inequalities (specifically, Bernstein's inequality and Azuma's inequality)
to prove that at each step of the construction there is a positive probability that 
the set $S_k$ has the required properties including (\ref{transverse-cond}).
Finally, in Section \ref{sec-choose} we fix the parameters of the random construction and complete
the proof of our restricted maximal estimates.


\subsection{Acknowledgement}

We are grateful to Vincenzo Aversa and Nir Lev for bringing the Aversa-Preiss
conjecture to our attention, and to Nir Lev for a further
introduction to questions regarding density and differentiation theorems for sparse sets.

We thank Andreas Seeger for showing us the argument (together with relevant background \cite{nagel-stein-wainger78}, \cite{duo-rdf86}, \cite{christ88-unpublished}) that $\tilde{\mathcal M}$ and $\tilde{\mathfrak M}$ are bounded on all $L^p(\mathbb R)$ with $1<p < 2$ for which the restricted operator $\mathcal M$ is $L^p$-bounded. He also referred us to \cite{gsw99-pams}, the techniques of which combined with his proof led to the $L^p-L^q$ generalizations for $\tilde{\mathcal M}^a$ and $\tilde{\mathfrak M}^a$ stated in Theorems \ref{thm-main} (\ref{thm-main-unrestricted1}) and \ref{thm-main3} (\ref{thm-main3-unrestricted1}). We include this argument with his kind permission in Section \ref{sec-scales} (see Proposition \ref{p-less-2}).   

The proof that singular measures cannot differentiate $L^1(\rr)$ (see Subsection 
\ref{preiss-example}) is due to David Preiss.  We are indebted to him for allowing us
to include his argument in this article.

We thank the anonymous referees for their valuable suggestions.

This project was started at the Fields Institute in Toronto, where both authors
were participating in the Winter 2008 program on Harmonic Analysis. 
Both authors are supported by NSERC Discovery Grants.

\section{The general Cantor-type construction} \label{sec-general-cantor}

\subsection{Basic construction of the sets $\{S_k\}$} \label{subsec-basic-constr}
All the nested sequences of sets $\{S_k : k \geq 1\}$ considered in this paper
will be obtained using a Cantor-type construction, whose basic features
we now describe. The parameters in the construction are the following:  
\begin{enumerate}[(a)]
\item a nondecreasing sequence of positive integers $\{N_k : k \geq 1 \}$ with $\delta_k^{-1} = N_1 N_2 \cdots N_k$,  
\item certain sequences of $0$-s and $1$-s, 
\begin{align*} 
&\mathbf X_k := \{X_k({\mathbf i}) : \mathbf i = (i_1, \cdots, i_k),
1 \leq i_j \leq N_j, \, 1 \leq j \leq k \}, \text{ and } 
\\ & \mathbf Y_k := \{Y_k({\mathbf i}) : \mathbf i = (i_1, \cdots, i_k),
1 \leq i_j \leq N_j, \, 1 \leq j \leq k \} \; 
\text{ satisfying } \\ 
&X_{k+1}(\overline{\mathbf i}) := X_k({\mathbf i}) Y_{k+1}(\overline{\mathbf i}), \; \text{ where }
\overline{\mathbf i} = (i_1, \cdots, i_{k+1}). \end{align*}
\end{enumerate}
Given these quantities, we denote
\[ \mathbb I = \mathbb I_k =
\{\mathbf i = (i_1, \cdots, i_k) \in \mathbb Z^k : 1 \leq i_r \leq N_r, \;
1 \leq r \leq k \}, \]
and for every multi-index $\mathbf i = (i_1, \cdots, i_k) \in \mathbb I_k$,
\begin{align}
\alpha({\mathbf i}) &= \alpha_k(\mathbf i)
= 1 + \frac{i_1 - 1}{N_1} + \frac{i_2-1}{N_1 N_2} + \cdots + \frac{i_k - 1}{N_1 \cdots N_k},
\label{alphai-def}
\\ I_k({\mathbf i}) &= \left[\alpha(\mathbf i), \alpha(\mathbf i) + \delta_k \right],
\quad \text{so that}
\quad I_k(\mathbf i) = \bigcup_{i_{k+1}=1}^{N_{k+1}} I_{k+1}(\overline{\mathbf i}).
\end{align}
The argument $k$ will sometimes be suppressed if it is clear from the context. 
We also set for
$k \geq 1$, 
\[M_k = N_1 N_2 \cdots N_k (\text{so that } \delta_k = M_k^{-1}), \quad P_k =
\#\{\mathbf i : X_k(\mathbf i) = 1 \}. \]

The construction proceeds as follows. Starting with the interval
$[1,2]$ equipped with the Lebesgue measure, we subdivide it into $N_1$ intervals
$\{I_1(i): 1 \leq i \leq N_1 \}$ of equal length. We choose the $P_1$ intervals $I_1(i_1)$
for which $X_1(i_1) = 1$ and assign weight $P_1^{-1}$ to each one. At the second
step, we subdivide each of the intervals chosen at the first step into $N_2$
subintervals of equal length $\delta_2$, and choose from $I_1({i_1})$ the subintervals
$\{I_2({\mathbf i}), \; \mathbf i = (i_1, i_2)\}$ such that $Y_2({\mathbf i}) = 1$.
The total number of chosen subintervals at this stage is therefore $P_2$, and each one
is assigned a weight of $P_2^{-1}$. We continue to iterate the procedure, selecting
at the $(k+1)$-th stage subintervals of the intervals chosen at the $k$-th step,
based on the sequences $Y_{k+1}(\mathbf i)$. In summary, the sets $S_k$
are chosen according to the scheme 
\[S_0 = [1,2], \qquad 
S_k = \bigcup_{\mathbf i} \left\{I_k(\mathbf i): X_k(\mathbf i) = 1 \right\}.
\] 
We will always assume that $|S_k| \searrow 0$, i.e., $P_k \delta_k \rightarrow 0$.   


\subsection{The Hausdorff dimension of the set $S$}

We now investigate the Hausdorff dimension of the resulting set
$S = \bigcap_{k=1}^{\infty} S_k$ as a function of the parameters of the construction. 

\begin{lemma}
Let $\dim_{\mathbb H}(S)$ denote the Hausdorff dimension of $S$ constructed above. Then  
\begin{enumerate}[(a)]
\item $\dim_{\mathbb H}(S) \leq \liminf_{k \rightarrow \infty} \log(P_k)/\log(M_k)$. 
\item   
$\dim_{\mathbb H}(S) \geq s_0 := \liminf_{k \rightarrow \infty} \log(P_{k}/N_k)/\log(M_{k-1})$. 
\end{enumerate} \label{Hd-lemma}
\end{lemma}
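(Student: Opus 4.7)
\emph{(a)} Use the natural cover of $S$ by the $P_k$ intervals of length $\delta_k = M_k^{-1}$ making up $S_k$. Since $S \subseteq S_k$ for every $k$,
$$
\mathcal H^s_{\delta_k}(S) \le P_k \delta_k^s = P_k M_k^{-s}.
$$
For any $s > \liminf_{k \to \infty} \log P_k/\log M_k$, one finds a subsequence $k_j \to \infty$ along which $P_{k_j} M_{k_j}^{-s} \le 1$; combined with $\delta_{k_j} \to 0$ this gives $\mathcal H^s(S) \le 1 < \infty$ and hence $\dim_H(S) \le s$. Letting $s$ decrease to the limit inferior completes the upper bound.

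\emph{(b)} Apply the mass distribution principle to a Frostman measure supported on $S$. Let $\mu$ be a weak-$\ast$ subsequential limit of the normalized densities $\mu_k := |S_k|^{-1} \mathbf{1}_{S_k}$; such a limit exists by compactness of Borel probability measures on $[1,2]$, and the portmanteau theorem applied to the closed sets $S_\ell$ (noting $\mu_k(S_\ell) = 1$ for every $k \ge \ell$) forces $\mu(S_\ell) \ge 1$ for all $\ell$, so $\mu$ is supported on $S = \bigcap_\ell S_\ell$.

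Now fix $s < s_0$, so that the hypothesis delivers $P_k/N_k \ge M_{k-1}^s$ for every sufficiently large $k$. For the Frostman estimate at scale $r$, let $k = k(r)$ be the unique integer with $\delta_k \le r < \delta_{k-1}$. The ball $B(x, r)$ has length $2r < 2\delta_{k-1}$, so it meets at most three consecutive level-$(k-1)$ intervals. The trivial inclusion $|B(x, r) \cap S_k| \le 2r$ together with $|S_k| = P_k/M_k$ yields
$$
\mu_k(B(x, r)) \le \frac{2r M_k}{P_k} = \frac{2r N_k M_{k-1}}{P_k} \le 2 r M_{k-1}^{1-s} = 2(r M_{k-1})^{1-s} r^s \le 2 r^s,
$$
where the third inequality uses $P_k/N_k \ge M_{k-1}^s$ and the last uses $r M_{k-1} < 1$ (from $r < \delta_{k-1} = M_{k-1}^{-1}$) together with $s < 1$. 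The portmanteau inequality $\mu(B(x, r)) \le \liminf_m \mu_m(B(x, r))$ transfers this bound to $\mu$, and then the mass distribution principle gives $\mathcal H^s(S) > 0$ and hence $\dim_H(S) \ge s$. Letting $s \uparrow s_0$ finishes the argument.

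\emph{Main obstacle.} The delicate step is to propagate the matched-scale inequality $\mu_k(B(x,r)) \le 2 r^s$ — which I establish only for the specific $k = k(r)$ — to a uniform bound $\mu_m(B(x,r)) \le C r^s$ for all $m \ge k(r)$, so that it survives passage to the weak-$\ast$ limit. The obstruction is that $|S_m| \to 0$, while the naive per-parent bound $|S_m \cap I_{k-1}(\mathbf i)| \le \delta_{k-1}$ does not decay accordingly. The resolution is to exploit the telescoping recursion $P_{j+1} \le N_{j+1} P_j$ combined with the lower bound $P_{k-1} \ge P_k/N_k \ge M_{k-1}^s$ to control the number of selected level-$m$ descendants per level-$(k-1)$ parent, equivalently to replace $\mu_k$ by the tree-consistent measure that distributes each parent's mass evenly among its selected children; either route reduces the propagation step to the single scale estimate already in hand.
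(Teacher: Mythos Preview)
Your part (a) is correct and is exactly what the paper does (it cites Falconer, Proposition 4.1, which is this covering argument).

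For part (b), your matched-scale estimate $\mu_{k(r)}(B(x,r)) \le 2r^s$ is correct, and you are right that the ``main obstacle'' is to pass from this to a bound on the limiting measure. Unfortunately neither of your suggested resolutions actually works in the generality of Section~2.1. The telescoping bound $P_{j+1}\le N_{j+1}P_j$ goes in the wrong direction: it bounds $P_{j+1}$ from above, hence $|S_{j+1}|^{-1}$ from below, which is useless for controlling $\mu_m(B(x,r))$ from above. More seriously, the construction in Section~2.1 places \emph{no} lower bound on the number of selected children per parent, so for $m>k(r)$ a single level-$k$ interval meeting $B(x,r)$ may contain a disproportionately large fraction of all level-$m$ selected intervals; then $\mu_m(B(x,r))$ can be much larger than $\mu_{k(r)}(B(x,r))$, and the bound does not survive the weak-$*$ limit. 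Your ``tree-consistent'' measure (split each parent's mass evenly among its selected children) is genuinely a measure on $S$, but it does not satisfy a Frostman bound either: a level-$k$ interval lying on a branch where each ancestor had only one selected child would carry mass $1$.

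The paper takes a different route that sidesteps the weak-$*$ convergence issue entirely. Rather than using the densities $\phi_k$, it builds a measure $\nu$ on $S$ directly by Caratheodory's Method~I from the premeasure $w(I)=P_k^{-1}$ for $I\in\mathcal B_k$; i.e.\ $\nu(F)=\sup_m \inf\bigl\{\sum_i w(J_i): F\subset\bigcup J_i,\ J_i\in\mathcal B,\ |J_i|\le\delta_m\bigr\}$. The key interval estimate is then obtained by counting: if $\delta_{k+1}\le|J|<\delta_k$, the number of selected level-$(k+1)$ intervals meeting $J$ is at most $\min\bigl(2N_{k+1},\,|J|/\delta_{k+1}\bigr)$, and bounding this minimum by $(2N_{k+1})^{1-s}(|J|/\delta_{k+1})^s$ gives $\nu_{k+1}(J)/|J|^s \le 2^{1-s}N_{k+1}^{1-s}/(P_{k+1}\delta_{k+1}^s)$, which is uniformly bounded for $s<s_0$. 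This is the same numerical inequality you derived, but packaged through a measure that is defined intrinsically on $S$ rather than as a limit of the $\mu_k$. If you want to repair your argument, the cleanest fix is to abandon the weak-$*$ limit of $\phi_k$ and work instead with this Caratheodory measure (or, equivalently, to argue directly with covers of $S$ using the two-sided count above).
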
  
\begin{proof}
Part (a) follows immediately from Proposition 4.1 in \cite{falconer-book}.
For the proof of part (b), we follow an approach similar to Example 4.6 in
\cite{falconer-book}. The goal is to define a measure $\nu$ on $S$ such that for
any $s < s_0$, there exists a constant $C_s < \infty$ satisfying 
\begin{equation}
\nu(J) \leq C_s |J|^s \quad \text{for all intervals $J \subset \mathbb R$}.
\label{ball} \end{equation}   
The desired conclusion would then follow from Frostman's lemma (see e.g. Proposition 8.2
in \cite{wolff-lectures}).

In order to define $\nu$, we follow a standard procedure due to Caratheodory (see
Chapter 4, \cite{mattila-book}). 
Let $\mathcal B = \bigcup \mathcal B_k$, where $\mathcal B_0 = [1,2]$ and $\mathcal
B_k$ for $k \geq 1$ is the family of all basic intervals of $S_k$, i.e., intervals
of the form $\{I_k(\mathbf i) : X_k(\mathbf i)=1 \}$. For each interval
$I \in \mathcal B$, we define its weight $w(I)$ to be
\begin{equation}
w([1,2]) = 1, \qquad w(I) = P_{k}^{-1} \text{ if } I \in \mathcal B_{k},
\label{def-w} 
\end{equation} 
and a family of outer measures $\nu_k$ as follows, 
\begin{equation} 
\nu_k(F) := \inf \Bigl\{\sum_{i=1}^{\infty} w(J_i) : F \subseteq \bigcup_{i=1}^{\infty}
J_i,\; |J_i| \leq \delta_k, \; J_i \in \mathcal B \Bigr\}
\label{def-sigmak} \end{equation}
for all $F \subseteq S$. It is easy to see that $\nu_k$ is monotonic, so we can
define $\nu$ by
\begin{equation}
\nu(F) = \lim_{k \rightarrow \infty} \nu_k(F) = \sup_{k \geq 1}\nu_k(F).
\label{def-sigma} \end{equation}
Then $\nu$ is a non-negative regular Borel measure of unit mass on subsets of $S$ (Theorem 4.2, \cite{mattila-book}). 

To prove (\ref{ball}), let $J$ be an interval with $0 < |J| \leq \delta_1$. Given such a $J$, there is a unique $k = k(J)$ such that $\delta_{k+1} \leq |J| < \delta_k$. The number of basic intervals of $S_{k+1}$ that intersect $J$ is 
\begin{enumerate}[(i)]
\item at most $2N_{k+1}$ since $J$ intersects at most two intervals of $S_k$, and 
\item at most $|J|/\delta_{k+1}$, since the basic intervals comprising $S_{k+1}$ are of length $\delta_{k+1}$ and have disjoint interiors.  
\end{enumerate}      
It therefore follows from the definitions (\ref{def-w}) and (\ref{def-sigmak}) that  
\begin{align*}
\nu_{k+1}(J) &\leq P_{k+1}^{-1} \min \left[ 2N_{k+1}, \frac{|J|}{\delta_{k+1}}\right] \\
&\leq P_{k+1}^{-1} (2N_{k+1})^{1-s} \left( \frac{|J|}{\delta_{k+1}}\right)^{s}
\text{ for all } 0 \leq s \leq 1, \\ \text{i.e.,} \quad
\frac{\nu_{k+1}(J)}{|J|^s} &\leq \frac{2^{1-s} N_{k+1}^{1-s}}{P_{k+1} \delta_{k+1}^s}.   
\end{align*}
Letting $k \rightarrow \infty$ and recalling (\ref{def-sigma}), we find that the right hand side of the inequality above is bounded above by a constant provided that $s < s_0$. This completes the proof.
\end{proof}
{{\em Remark:}} In our applications, 
the sequences $\mathbf X_k, \mathbf Y_k$ of $0$-s and $1$-s will be chosen according to a random mechanism, to be described in Section \ref{section-random}. We will see in these instances that the upper and the lower bounds given by Lemma \ref{Hd-lemma} coincide, providing an exact value of the Hausdorff dimension.


\subsection{A limiting measure} 
\label{sub-measure-limit}

Although most of our results can be stated purely in terms of the maximal operators $\calm$ 
associated with the sequence of sets $\{S_k: k \geq 1\}$, it is often of interest to
know whether the normalized Lebesgue measures $\phi_k = \mathbf 1_{S_k}/|S_k|$ have
a nontrivial weak-$*$ limit $\mu$.  In this case, the maximal operator $\mathfrak M$ associated
with $\mu$ is bounded by $\mathcal M$. If each
interval in $S_k$ contains the same number of subintervals of $S_{k+1}$, it is easy
to see that $\mu$ exists and is identical to the measure $\nu$ defined in the
last subsection. Below we provide a sufficient condition for the existence of the
weak-$\ast$ limiting measure under a 
slightly weaker assumption that will be verified for certain constructions in the sequel.

\begin{lemma} \label{limit-measure-lemma}
Suppose that the distribution of the chosen subintervals $\{I_{\mathbf i}(k) : X_k(\mathbf i) = 1 \}$ within $S_{k-1}$ is approximately uniform in the following sense: 
\begin{equation} \sup_{k': k' \geq k} \sum_{\begin{subarray}{c}\mathbf i \\
X_k({\mathbf i}) = 1 \end{subarray}} \left| \int_{I_k({\mathbf i})} \left[\phi_{k'} - \phi_k \right](x) \, dx  \right| \rightarrow 0 \quad \text{ as } \quad k \rightarrow \infty.  \label{limit-measure} \end{equation}  
Then there exists a probability measure $\mu$ on $[1,2]$ such that $\phi_k \rightarrow \mu$ in the weak$-{\ast}$ topology, i.e., for all $f \in C[1,2]$
\[ \int f \phi_k \rightarrow \int fd\mu  \quad \text{ as } \quad k \rightarrow \infty. \] 
\end{lemma}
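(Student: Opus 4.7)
My plan is to show that the sequence of real numbers $\{\int f \phi_k\}$ is Cauchy for every $f \in C[1,2]$, which by the Riesz representation theorem will identify a probability measure $\mu$ (with $L(f) := \lim_k \int f\phi_k = \int f\, d\mu$) since the limit functional is positive and satisfies $L(1)=1$. So the entire task reduces to a quantitative bound on $|\int f(\phi_{k'} - \phi_k)\, dx|$ for $k' \geq k$.

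The key observation is that $S_{k'} \subseteq S_k$, so both $\phi_{k'}$ and $\phi_k$ are supported on the union of the chosen basic intervals at level $k$. The strategy is to split $f$ on each basic interval $I_k(\mathbf{i})$ (with $X_k(\mathbf{i}) = 1$) into a constant piece plus an oscillation piece. Specifically, writing $c_{\mathbf i} = f(\alpha_k(\mathbf{i}))$,
\begin{align*}
\int f(\phi_{k'} - \phi_k)\, dx
&= \sum_{\mathbf{i}:\, X_k(\mathbf{i})=1} c_{\mathbf i} \int_{I_k(\mathbf{i})}(\phi_{k'} - \phi_k)\, dx \\
&\quad + \sum_{\mathbf{i}:\, X_k(\mathbf{i})=1} \int_{I_k(\mathbf{i})}(f - c_{\mathbf i})(\phi_{k'} - \phi_k)\, dx.
\end{align*}
The first sum is bounded in absolute value by $\|f\|_\infty$ times the quantity appearing in hypothesis (\ref{limit-measure}), which tends to zero as $k \to \infty$ uniformly in $k' \geq k$.

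For the second sum, since $f$ is continuous on the compact interval $[1,2]$, it is uniformly continuous with modulus $\omega_f$, and $|f - c_{\mathbf i}| \leq \omega_f(\delta_k)$ on $I_k(\mathbf i)$. Thus
\begin{equation*}
\left| \sum_{\mathbf{i}:\, X_k(\mathbf{i})=1} \int_{I_k(\mathbf{i})}(f - c_{\mathbf i})(\phi_{k'} - \phi_k)\, dx \right|
\leq \omega_f(\delta_k)\int (\phi_{k'} + \phi_k)\, dx = 2\omega_f(\delta_k),
\end{equation*}
which also tends to zero as $k \to \infty$ since $\delta_k \to 0$. Combining the two estimates, $\int f \phi_k$ is Cauchy, hence convergent.

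Having obtained convergence, I would close by noting that $L(f) := \lim_k \int f\phi_k$ is a well-defined positive linear functional on $C[1,2]$ with $L(1) = 1$, so the Riesz representation theorem supplies a unique Borel probability measure $\mu$ on $[1,2]$ with $\int f\, d\mu = L(f)$, which is the desired weak-$*$ limit. I do not anticipate a genuine obstacle here — the argument is essentially a standard Cauchy-sequence approximation, and the hypothesis (\ref{limit-measure}) has been tailored precisely to make the constant-on-cells piece summable to zero.
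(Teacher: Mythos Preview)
Your proof is correct and follows essentially the same approach as the paper: both show $\{\int f\phi_k\}$ is Cauchy by splitting $f$ on each basic interval $I_k(\mathbf i)$ into a constant (the paper uses an arbitrary point $x_k(\mathbf i)\in I_k(\mathbf i)$ where you use the left endpoint $\alpha_k(\mathbf i)$) plus an oscillation controlled by uniform continuity, then bound the constant part by $\|f\|_\infty$ times the hypothesis quantity and the oscillation part by $2\omega_f(\delta_k)$. Your explicit invocation of Riesz representation at the end is a minor addition that the paper leaves implicit.
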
 
\begin{proof}
It suffices to show that $\lim_{k \rightarrow \infty} \int f \phi_k$ exists for all continuous functions $f$ on $[1,2]$, i.e., that the sequence $\{\int f \phi_k : k \geq 1\}$ is Cauchy. Since $f$ is uniformly continuous, given any $\epsilon > 0$ there exists $\delta > 0$ such that \begin{equation} \label{uniform-cty}|f(x) - f(y)|< \frac{\epsilon}{4}\quad \text{ whenever } \quad |x-y|< \delta. \end{equation}
Fix $K \geq 1$ such that $\delta_K < \delta$ and 
\begin{equation} \sup_{k': k' \geq k} \sum_{\begin{subarray}{c}\mathbf i \\ 
X_k({\mathbf i}) = 1 \end{subarray}} \left| \int_{I_k({\mathbf i})}
 \left[\phi_{k'} - \phi_k \right](x) \, dx  \right| < \frac{\epsilon}{2 ||f||_{\infty}}
 \text{ for all } k \geq K.
\label{uniform} \end{equation} 
Let $\{x_k({\mathbf i}) : X_k({\mathbf i}) = 1\}$ be a collection of points in $[1,2]$ 
such that $x_k({\mathbf i}) \in I_k({\mathbf i})$. Then for all $k' \geq k \geq K$,    
\begin{align*}
&\left|\int f(x) \bigl( \phi_{k'}(x) - \phi_k(x)\bigr) \, dx \right| \\
&\leq \sum_{\begin{subarray}{c} \mathbf i \\ X_k({\mathbf i})=1 \end{subarray}} 
\int_{I_k({\mathbf i})} \Big| \left[f(x) - f(x_k({\mathbf i})) \right] 
\left(\phi_{k'} - \phi_k \right)(x)\Big| \, dx \\ 
& \hskip0.5in + \sum_{\begin{subarray}{c} \mathbf i \\ 
X_k({\mathbf i})=1 \end{subarray}} |
f(x_k({\mathbf i}))| \left| \int_{I_k({\mathbf i})} \left(\phi_{k'} - \phi_k \right)(x) \, dx \right| \\ 
&\leq \frac{\epsilon}{4} \int_{S_k} (\phi_{k'} + \phi_k)(x)\, dx 
+\|f\|_{\infty} \sum_{\begin{subarray}{c} \mathbf i \\ X_k({\mathbf i}) = 1 \end{subarray}} \left| \int_{I_k({\mathbf i})} \left(\phi_{k'} - \phi_k \right)(x) \, dx \right| \\ 
&\leq 2\frac{\epsilon}{4} + \frac{\epsilon}{2} = \epsilon,
\end{align*} 
where we have used (\ref{uniform-cty}) and (\ref{uniform}) at the last two steps.
\end{proof}


\subsection{Internal tangencies and transverse intersections} \label{sec-int-tgt}

An important ingredient in the derivation of the maximal estimates is the behavior 
of the intersections of a fixed number of affine copies of $S_k$. Obviously, 
much of our analysis will depend on the specific structure of $\{S_k\}$, 
which will be described in detail in Section 
\ref{section-random}. However, we also need certain general properties of the 
$n$-fold intersections of affine copies of sets $S_k$ constructed as in 
Subsection \ref{subsec-basic-constr}. The relevant results of this type 
are collected in this subsection. 

Fix $k \geq 1$, $r,s \in [1,2]$ and points $x,y$ in a fixed compact set, say $[-4,0]$ (the reason for this choice will be made clear in the next section). We will be interested in 
classifying pairs of multi-indices $(\mathbf i, \mathbf j)\in{\mathbb I}_k^2$ such that   
\begin{equation}\label{int-e1}
(x + r I_k({\mathbf i})) \cap (y + s I_k({\mathbf j})) \ne \emptyset.
\end{equation}

We will need to distinguish between the situations where $|\alpha_k(\mathbf i) - 
\alpha_k(\mathbf j)|$ is ``small'' or ``large''. The first case will be referred
to as an {\it internal tangency} and the second as a {\it transverse intersection}.
In view of subsequent applications, we give the precise definitions of these notions
for general $n$-fold intersections of intervals. However, the main ideas are already contained
in the case $n=2$, which we encourage the reader to investigate first.    

\begin{definition}
For integers $k \geq 1, n \geq 2$ and any set \[ \mathbf A_n = \{(c_{\ell},r_{\ell}) : 1 \leq \ell \leq n,\; c_{\ell} \in [-4,0],  r_{\ell} \in [1,2]  \} \] of $n$ translation-dilation pairs, we define a set $\mathbb F= \mathbb F[n, k; \mathbf A_n]$ and $n$ projection maps $\pi_{\ell}= \pi_{\ell}[n, k; \mathbf A_n](\mathbf i_1, \cdots, \mathbf i_n) : \mathbb F \rightarrow \mathbb I_k$ as follows,  
\begin{equation} 
\mathbb F  = \Bigl\{(\mathbf i_1, \cdots, \mathbf i_n) \in \mathbb I_k^n: 
\bigcap_{\ell = 1}^n \bigl(c_{\ell} + r_{\ell} I_k(\mathbf i_{\ell})\bigr) \ne \emptyset \Bigr\},
\label{def-F} \end{equation}
\[ \pi_{\ell}(\mathbf i_1, \cdots, \mathbf i_n)  = \mathbf i_{\ell}. \]  
\end{definition}

\noindent{\em{Remarks:}} 
\begin{enumerate}[1.] 
\item We emphasize that $\mathbb F$ consists of {\em{all}} tuples $(\mathbf i_1, \cdots, 
\mathbf i_n) \in \mathbb I_k^n$ such that (\ref{def-F}) holds, regardless of the actual 
choice of the sets $S_k$. Thus $\mathbb F$ depends only on the parameters $n,k,N_j$, 
and on the choice of $\mathbf A_n$.

\item Eventually, our translation and dilation parameters $c_{\ell}$ and $r_{\ell}$ 
will be chosen from discrete subsets ${\cal C}, {\cal R}$ of the respective spaces 
$[-4,0]$ and $[1,2]$.  Then the total number of possible collections $\mathbb F$ 
cannot exceed $|{\cal C}|^n|{\cal R}|^n$, again irrespective of the choice of the 
sets $S_k$.    
\end{enumerate}

The next lemma is an easy observation concerning the ``almost injectivity'' of the 
projections $\pi_{\ell}$.

\begin{lemma}
For any $1 \leq \ell \leq n$ and any fixed choice of multi-indices $(\mathbf i_{\ell'} : 1 \leq \ell' \leq n, \ell' \ne \ell) \in \mathbb I_k^{n-1}$,
\begin{equation} 
\max_{{\mathbf i_{\ell}} :\ (\mathbf i_1, \cdots, \mathbf i_n) \in \mathbb F}
\alpha_k({\mathbf i_{\ell}}) 
-\min_{{\mathbf i_{\ell}} :\ (\mathbf i_1, \cdots, \mathbf i_n) \in \mathbb F}
\alpha_k({\mathbf i_{\ell}}) 
\leq 4 \delta_k. \label{max-min} \end{equation}
In particular, for any $1 \leq \ell \leq n$ the map $\pi_{\ell}$ is at most four-to-one, i.e.,  
\begin{equation} \sup_{\mathbf i_{\ell} \in \mathbb I_k} \#\bigl(\pi_{\ell}^{-1}(\mathbf i_{\ell})\bigr) \leq 4. \label{421} \end{equation}      \label{prelim-lemma}
\end{lemma}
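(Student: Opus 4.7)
The plan is to reduce to a single constraint by fixing all multi-indices except $\mathbf i_\ell$. Let $K = \bigcap_{\ell' \ne \ell}\bigl(c_{\ell'} + r_{\ell'} I_k(\mathbf i_{\ell'})\bigr)$. If $K = \emptyset$, then $\pi_\ell^{-1}(\mathbf i_{\ell'})_{\ell' \ne \ell}$ is empty and (\ref{max-min}) holds vacuously. Otherwise, being an intersection of closed intervals on the line, $K$ is a closed interval $[p,q]$, and since $K \subseteq c_{\ell'} + r_{\ell'} I_k(\mathbf i_{\ell'})$ for any $\ell' \ne \ell$, its length satisfies
\[
q - p \;\leq\; \min_{\ell' \ne \ell} r_{\ell'} \delta_k \;\leq\; 2 \delta_k.
\]

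Next I would translate the condition $(\mathbf i_1,\dots,\mathbf i_n) \in \mathbb F$ into an inequality on $\alpha_k(\mathbf i_\ell)$. Writing $c_\ell + r_\ell I_k(\mathbf i_\ell) = [c_\ell + r_\ell \alpha_k(\mathbf i_\ell),\; c_\ell + r_\ell \alpha_k(\mathbf i_\ell) + r_\ell \delta_k]$, the requirement that this interval meet $[p,q]$ becomes
\[
p - r_\ell \delta_k - c_\ell \;\leq\; r_\ell \alpha_k(\mathbf i_\ell) \;\leq\; q - c_\ell.
\]
Dividing by $r_\ell \geq 1$ shows that the admissible $\alpha_k(\mathbf i_\ell)$ are confined to an interval of length at most $(q-p)/r_\ell + \delta_k \leq 2\delta_k + \delta_k \leq 4\delta_k$, which gives (\ref{max-min}).

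Finally, for the four-to-one claim (\ref{421}), I would use that the definition (\ref{alphai-def}) realizes $\mathbf i \mapsto \alpha_k(\mathbf i)$ as a bijection from $\mathbb I_k$ onto the arithmetic progression $\{1 + j\delta_k : 0 \leq j \leq M_k - 1\}$ with common difference $\delta_k$. Any real interval of length $\leq 3\delta_k$ therefore contains at most four such values, so fixing the other $n-1$ multi-indices leaves at most four choices of $\mathbf i_\ell$, as claimed.

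I do not anticipate a genuine obstacle here: the lemma is a purely geometric fact about overlaps of $\delta_k$-length intervals on the line, and the only mild care needed is to handle the normalization $r_\ell \in [1,2]$ (which ensures that dividing by $r_\ell$ does not worsen the bound) and to convert the metric bound into an integer count via the uniform $\delta_k$-spacing of the $\alpha_k$-lattice.
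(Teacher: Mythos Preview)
Your argument for (\ref{max-min}) is correct and is essentially the paper's: both reduce to the observation that a single fixed interval $c_{\ell'}+r_{\ell'}I_k(\mathbf i_{\ell'})$ has length at most $2\delta_k$, and since $r_\ell\ge 1$ the admissible values of $\alpha_k(\mathbf i_\ell)$ lie in an interval of length a few $\delta_k$. The paper just says the fixed interval ``can intersect at most four intervals of the form $c_\ell+r_\ell I_k(\mathbf i_\ell)$ and these intervals must necessarily be adjacent,'' whereas you write out the inequality for $\alpha_k(\mathbf i_\ell)$ explicitly and then count lattice points; the content is identical.

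There is one slip in your passage to (\ref{421}). What you actually prove is that fixing the $(n-1)$-tuple $(\mathbf i_{\ell'})_{\ell'\ne\ell}$ leaves at most four choices of $\mathbf i_\ell$ --- this is precisely (\ref{max-min}) plus the lattice count. But (\ref{421}) as written fixes the single multi-index $\mathbf i_\ell$ and bounds the number of $(n-1)$-tuples in the fibre $\pi_\ell^{-1}(\mathbf i_\ell)$. For $n=2$ the two statements coincide; for $n\ge 3$ they do not, and in fact with $c_\ell\equiv 0$, $r_\ell\equiv 1$ the fibre $\pi_1^{-1}(\mathbf i_1)$ can easily have more than four elements (since the closed intervals $I_k(\mathbf i)$ share endpoints). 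The paper's own one-line proof ``the second part follows from the first'' makes the same identification, so you have faithfully reproduced its argument, and in every downstream use (Corollary~\ref{prelim-cor1}, Lemma~\ref{internal-tangency}, Lemma~\ref{ran-lemma3}) only an $O_n(1)$ bound is ever needed, so the imprecision is harmless.
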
 
\begin{proof}
The second part of the lemma follows from the first. The inequality in (\ref{max-min}) is essentially a fact about two-fold intersections. Fix $\ell' \ne \ell$ and $(\mathbf i_1, \cdots, \mathbf i_n) \in \mathbb F$, so that by definition (\ref{def-F})
\[ (x_{\ell'} \cap r_{\ell'} I_k(\mathbf i_{\ell'})) \cap (x_{\ell} \cap r_{\ell} I_k(\mathbf i_{\ell})) \ne \emptyset. \]  Since $r_{\ell}, r_{\ell'}  \in [1,2]$ any interval of the form $x_{\ell'} + r_{\ell'} I_k(\mathbf i_{\ell'})$ can intersect at most four intervals of the form $x_{\ell} + r_{\ell} I_k(\mathbf i_{\ell})$ and these intervals must necessarily be adjacent. The claim follows.   
\end{proof} 
\begin{corollary}\label{prelim-cor1}
There exists a decomposition of $\mathbb F$ into at most $4^{n-1}$ subsets so that all the projection maps $\pi_{\ell}$ restricted to each subset are injective.  
\end{corollary}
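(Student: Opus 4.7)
Proof proposal:

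The plan is to proceed by induction on $n$, with K\"onig's edge-coloring theorem for bipartite graphs as the engine. The base case $n=1$ is immediate, since $\pi_1$ is the identity on $\mathbb{I}_k$ and $\mathbb F$ itself trivially serves as a decomposition into $4^{0}=1$ piece.

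For the inductive step, I would view $\mathbb F$ as the edge set of an auxiliary bipartite graph $B$ whose two vertex parts are $\mathbb{I}_k$ (indexed by the last coordinate $\mathbf{i}_n$) and $\mathbb{I}_k^{n-1}$ (indexed by the tuple of the first $n-1$ coordinates $(\mathbf{i}_1,\ldots,\mathbf{i}_{n-1})$). The element $(\mathbf{i}_1,\ldots,\mathbf{i}_n)\in\mathbb F$ becomes the edge joining $\mathbf{i}_n$ with $(\mathbf{i}_1,\ldots,\mathbf{i}_{n-1})$. Lemma \ref{prelim-lemma} applied with $\ell=n$ controls the degree on the $\mathbb{I}_k$-side by $4$, while the same lemma applied with the first $n-1$ indices fixed and $\mathbf{i}_n$ free controls the degree on the $\mathbb{I}_k^{n-1}$-side by $4$. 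K\"onig's theorem then edge-colors $B$ with at most $4$ colors, which partitions $\mathbb F$ into at most $4$ matchings $\mathcal{C}_1,\ldots,\mathcal{C}_4$ of $B$. On each $\mathcal{C}_s$ the projection $\pi_n$ is injective (distinct edges meet distinct $\mathbb{I}_k$-vertices), and so is the block projection $\rho_n:=(\pi_1,\ldots,\pi_{n-1})$ (distinct edges meet distinct $\mathbb{I}_k^{n-1}$-vertices).

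Because $\rho_n|_{\mathcal{C}_s}$ is injective, $\mathcal{C}_s$ is in bijection with $\mathcal{C}_s':=\rho_n(\mathcal{C}_s)$. Nonemptiness of an $n$-fold intersection forces nonemptiness of the $(n-1)$-fold sub-intersection obtained by discarding the $n$-th set, so $\mathcal{C}_s'\subseteq \mathbb F[n-1,k;\mathbf{A}_{n-1}]$ with $\mathbf{A}_{n-1}=\{(c_\ell,r_\ell):1\leq\ell\leq n-1\}$. Applying the inductive hypothesis to $\mathbb F[n-1,k;\mathbf{A}_{n-1}]$ and restricting the resulting decomposition to $\mathcal{C}_s'$ produces at most $4^{n-2}$ subsets of $\mathcal{C}_s'$ on each of which $\pi_1,\ldots,\pi_{n-1}$ are injective. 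Pulling this partition back through the bijection $\rho_n|_{\mathcal{C}_s}$ gives at most $4^{n-2}$ pieces of $\mathcal{C}_s$; on each piece the first $n-1$ projections remain injective (inherited from $\mathcal{C}_s'$) while $\pi_n$ remains injective (inherited from the matching $\mathcal{C}_s$). Summing over the at most $4$ matchings yields at most $4\cdot 4^{n-2}=4^{n-1}$ pieces, as required.

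The conceptually nontrivial step is the bipartite reformulation combined with K\"onig's theorem: the naive alternative of peeling off one coordinate at a time via Lemma \ref{prelim-lemma} would split each piece by a factor of $4$ at every step and only yield the weaker bound $4^{n}$. Grouping the first $n-1$ coordinates into a single ``super-vertex'' and using bipartite edge coloring saves one factor of $4$ at the start of every inductive stage. Beyond this observation, the degree calculations on both sides of $B$ and the pull-back of injectivity are routine.
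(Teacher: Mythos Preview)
Your argument is correct and follows exactly the outline the paper gives (``easy induction on $n$ combined with (\ref{421}), left to the reader''), while supplying the details the authors omit. The one ingredient you add beyond the paper's hint is K\"onig's edge-coloring theorem, which you use to make $\pi_n$ and the block projection $(\pi_1,\ldots,\pi_{n-1})$ injective \emph{simultaneously} with only $4$ colors at each inductive step; your observation that the naive one-coordinate-at-a-time peeling would only yield $4^n$ is accurate, so some such device is needed to secure the exponent $n-1$. Since the paper gives no further indication of how the ``easy induction'' is meant to go, one cannot tell whether the authors had K\"onig or some more elementary trick (perhaps exploiting the specific staircase structure of $\mathbb F$) in mind, but your route is certainly valid.
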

\begin{proof}
The proof is an easy induction on $n$ combined with (\ref{421}), and is left to the interested reader. 
\end{proof}  
The lemma above motivates the following definition. Setting $\mathbf i_{\ell} =
(\mathbf i_{\ell}', i_{\ell k}) \in \mathbb I_{k-1} \times \{1,2, \cdots, N_k\}$,
we find that each $\mathbb F = \mathbb F[n,k;\mathbf A_n]$ decomposes as \begin{align*} \mathbb F &= \mathbb F_{\text{int}} \cup \mathbb F_{\text{tr}}, \quad \text{ where } \quad  
\mathbb F_{\text{int}} := \bigcup_{1 \leq \ell \neq \ell' \leq n} \mathbb F_{\text{int}}(\ell, \ell'), \text{ with } \\ \mathbb F_{\text{int}}(\ell, \ell') &:= \{(\mathbf i_1, \cdots, \mathbf i_n) \in \mathbb F : \mathbf i_{\ell}' = \mathbf i'_{\ell'},\; |i_{\ell k} - i_{\ell' k}| \leq 4 \}, \text{ and } \\
\mathbb F_{\text{tr}} &:= \mathbb F \setminus \mathbb F_{\text{int}}.
\end{align*} 
Note that in view of (\ref{alphai-def}), 
\begin{equation} (\mathbf i_1, \cdots, \mathbf i_{n}) \in \mathbb F_{\text{int}}(\ell, \ell') \quad \text{ implies }  \quad |\alpha_k(\mathbf i_{\ell}) - \alpha_k(\mathbf i_{\ell'})| \leq 4 \delta_k. 
\label{int-cond} \end{equation}  
\begin{definition}
The collections $\mathbb F_{\text{int}}$ and $\mathbb F_{\text{tr}}$, which depend only on $n, k, \{N_j : 1 \leq j \leq k \}$ and $\mathbf A_n = \{(c_{\ell},r_{\ell}) : 1 \leq \ell \leq n \}$, are referred to as the classes of internal tangencies and transverse intersections respectively.
\end{definition}
A large number of internal tangencies forces a relation between the translation (and hence dilation) parameters, in a sense made precise by the next lemma.  (A similar observation was made 
by Aversa and Preiss in \cite{AP2}.)

\begin{lemma} \label{internal-tangency}
Suppose $\#(\mathbb F_{\text{int}}) \geq L$. Then \[\min \{|c_{\ell}-c_{\ell'}| : 1 \leq \ell \neq \ell' \leq n \} \leq \min\left(4, 80n(n-1)/L \right). \] 
\end{lemma}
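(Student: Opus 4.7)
The trivial part $\min_{\ell \neq \ell'} |c_\ell - c_{\ell'}| \leq 4$ is immediate since all $c_\ell \in [-4,0]$. The real content is the bound $80n(n-1)/L$, and my plan is a pigeonhole-plus-linearization argument. Since $\mathbb F_{\text{int}} = \bigcup_{\ell \neq \ell'} \mathbb F_{\text{int}}(\ell, \ell')$ is a union of $n(n-1)$ sets, some pair $(\ell, \ell')$ satisfies
\[ |\mathbb F_{\text{int}}(\ell, \ell')| \geq \frac{L}{n(n-1)}. \]
Fix such a pair. It suffices to prove the implication ``$|\mathbb F_{\text{int}}(\ell, \ell')| \geq M$ implies $|c_\ell - c_{\ell'}| \leq 80/M$'', which combined with the pigeonhole value of $M$ produces the claimed $80n(n-1)/L$.

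For each $(\mathbf i_1, \cdots, \mathbf i_n) \in \mathbb F_{\text{int}}(\ell, \ell')$, membership in $\mathbb F$ forces $(c_\ell + r_\ell I_k(\mathbf i_\ell)) \cap (c_{\ell'} + r_{\ell'} I_k(\mathbf i_{\ell'})) \neq \emptyset$, so their left endpoints obey $|c_\ell + r_\ell \alpha(\mathbf i_\ell) - c_{\ell'} - r_{\ell'}\alpha(\mathbf i_{\ell'})| \leq 2\delta_k$, while (\ref{int-cond}) gives $|\alpha(\mathbf i_\ell) - \alpha(\mathbf i_{\ell'})| \leq 4\delta_k$. Adding $r_{\ell'}$ times the second to the first yields the key linearized constraint
\[ \bigl|(c_\ell - c_{\ell'}) + (r_\ell - r_{\ell'}) \alpha(\mathbf i_\ell)\bigr| \leq 10 \delta_k. \]
If $r_\ell = r_{\ell'}$ this already gives $|c_\ell - c_{\ell'}| \leq 10 \delta_k$. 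Otherwise, $\alpha(\mathbf i_\ell)$ is forced into an interval of length $\leq 20 \delta_k / |r_\ell - r_{\ell'}|$, and since the values $\{\alpha(\mathbf i) : \mathbf i \in \mathbb I_k\}$ form an arithmetic progression with common difference $\delta_k$, only $\lesssim 1/|r_\ell - r_{\ell'}|$ admissible $\mathbf i_\ell$ can meet this constraint. For each such $\mathbf i_\ell$, the internal tangency definition allows at most $9$ choices of $\mathbf i_{\ell'}$ (same parent, $|i_{\ell k} - i_{\ell' k}| \leq 4$), and for every remaining index $m$ Lemma~\ref{prelim-lemma} restricts $\mathbf i_m$ to a bounded set of choices. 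Multiplying these counts gives a bound on $|\mathbb F_{\text{int}}(\ell, \ell')|$ proportional to $1/|r_\ell - r_{\ell'}|$, which inverts to $|r_\ell - r_{\ell'}| \lesssim 1/|\mathbb F_{\text{int}}(\ell, \ell')|$; finally $|c_\ell - c_{\ell'}| \leq 2|r_\ell - r_{\ell'}| + 10 \delta_k$ (from the linearized constraint with $\alpha \leq 2$) produces the desired $O(n(n-1)/L)$ estimate.

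The main obstacle is bookkeeping the constants so that the aggregate prefactor is actually $80$ rather than something larger, particularly because the fibers of the projection to the coordinates $(\mathbf i_\ell, \mathbf i_{\ell'})$ could naively contribute a factor $4^{n-2}$. The way to dodge this is to project first onto the single coordinate $\mathbf i_\ell$, use the linearization to control the number of admissible $\alpha(\mathbf i_\ell)$, and only then invoke the at-most-$9$ bound for $\mathbf i_{\ell'}$, so that the fiber along the remaining coordinates is cleanly absorbed into the pigeonhole factor $n(n-1)$ already extracted. A minor secondary issue is the $10\delta_k$ error: when $80n(n-1)/L$ dominates $\delta_k$ the estimate follows directly, and in the opposite regime the trivial bound $\min|c_\ell - c_{\ell'}| \leq 4$ (or the sharper $\leq 10\delta_k$ from the $r_\ell = r_{\ell'}$ case) takes over, so the two bounds in $\min(4, 80n(n-1)/L)$ together cover all possibilities.
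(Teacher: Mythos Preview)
Your pigeonhole step and the derivation of the linearized constraint $|(c_\ell - c_{\ell'}) + (r_\ell - r_{\ell'})\alpha_k(\mathbf i_\ell)| \leq 10\delta_k$ match the paper exactly. The divergence is in the finishing move, and your version does not deliver the constant $80$.

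The paper never goes through $|r_\ell - r_{\ell'}|$. After passing (via Lemma~\ref{prelim-lemma}) to a subset $\mathbb F^\ast \subseteq \mathbb F_{\text{int}}(\ell,\ell')$ of size $\geq L/(2n(n-1))$ on which $\pi_\ell$ is injective, it takes \emph{two} tuples $\mathbf I, \mathbf J \in \mathbb F^\ast$, writes down two instances of the linearized constraint, and eliminates $r_\ell - r_{\ell'}$ by the linear combination $\alpha_k(\mathbf j_\ell)\cdot(\text{first}) - \alpha_k(\mathbf i_\ell)\cdot(\text{second})$, obtaining directly
\[
|c_\ell - c_{\ell'}|\,|\alpha_k(\mathbf i_\ell)-\alpha_k(\mathbf j_\ell)| \leq 40\delta_k.
\]
Choosing $\mathbf I, \mathbf J$ to maximize the $\alpha$-gap (which is $\geq \frac{L\delta_k}{2n(n-1)}$ by injectivity of $\pi_\ell$ on $\mathbb F^\ast$ and the $\delta_k$-spacing of the $\alpha_k$'s) yields $80n(n-1)/L$ in one stroke. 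The coordinates $\mathbf i_m$ for $m \neq \ell, \ell'$ are never counted.

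Your route instead \emph{must} count full tuples to compare $|\mathbb F_{\text{int}}(\ell,\ell')|$ against the number of admissible $\alpha_k(\mathbf i_\ell)$, and the proposed ``dodge'' for the $4^{n-2}$ fiber factor does not work: once $(\ell,\ell')$ has been fixed by pigeonhole, the remaining coordinates $\mathbf i_m$ ($m\neq \ell,\ell'$) genuinely range over $\sim 4^{n-2}$ values for each fixed $(\mathbf i_\ell,\mathbf i_{\ell'})$, and that multiplicity has nowhere to be absorbed --- the factor $n(n-1)$ was already spent selecting the pair. So your count gives $|\mathbb F_{\text{int}}(\ell,\ell')| \lesssim 4^{n-2}/|r_\ell - r_{\ell'}|$, hence only $|c_\ell - c_{\ell'}| \lesssim 4^{n-2}\, n(n-1)/L$. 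This is enough for every downstream application in the paper (only a bound $C(n)/L$ is ever used), but it does not prove the lemma as stated; the two-point elimination is precisely the device that lets the paper bypass those extra coordinates.
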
 
\begin{proof}
Since the translation parameters all lie in $[-4,0]$, we may assume without loss of generality that $L > 20n(n-1)$. Using the definition of $\mathbb F_{\text{int}}$ and pigeonholing we can find indices $\ell \neq \ell'$ such that $\#(\mathbb F_{\text{int}}(\ell, \ell')) \geq \frac{2L}{n(n-1)}$. By Lemma \ref{prelim-lemma}, there exists a further subset $\mathbb F^{\ast} \subseteq \mathbb F_{\text{int}}(\ell, \ell')$ such that \begin{equation} \#(\mathbb F^{\ast})\geq \frac{1}{4}\#(\mathbb F_{\text{int}}(\ell, \ell')) \geq \frac{L}{2n(n-1)}, \text{ and } \mathbf \pi_{\ell}\Bigl|_{\mathbb F^{\ast}} \text{ is injective }. \label{Fstar} \end{equation}

Let $(\mathbf i_1, \cdots \mathbf i_n), (\mathbf j_1, \cdots, \mathbf j_n) \in \mathbb F$. Since $r_{\ell}, r_{\ell'} \in [1,2]$, it follows from the definition (\ref{def-F}) that  
\begin{equation} \begin{aligned} 
\left|\bigl(c_{\ell} + r_{\ell} \alpha_k(\mathbf i_{\ell}) \bigr) - 
\bigl(c_{\ell'} + r_{\ell'} \alpha_k(\mathbf i_{\ell'}) \bigr) \right| 
\leq \max(r_{\ell},r_{\ell'}) \delta_k &\leq 2\delta_k,  \\ \text{ and similarly } \left|\bigl(c_{\ell} + r_{\ell} \alpha_k(\mathbf j_{\ell}) \bigr) - 
\bigl( c_{\ell'} + r_{\ell'} \alpha_k(\mathbf j_{\ell'}) \bigr) \right| &\leq 2 \delta_k.  \end{aligned} \label{est1} \end{equation}
If further $(\mathbf i_1, \cdots, \mathbf i_n), (\mathbf j_1, \cdots, \mathbf j_n) \in \mathbb F_{\text{int}}(\ell, \ell')$, then (\ref{est1}) and (\ref{int-cond}) imply that 
\begin{align*} 
\bigl|(c_{\ell}- c_{\ell'}) + (r_{\ell} - r_{\ell'})\alpha_k(\mathbf i_{\ell}) \bigr| 
\leq 2 \delta_k + r_{\ell'} |\alpha_k(\mathbf i_{\ell'}) - \alpha_k(\mathbf i_{\ell})| 
&\leq 10 \delta_k, \\ 
\bigl|(c_{\ell}- c_{\ell'}) + (r_{\ell} - r_{\ell'})\alpha_k(\mathbf j_{\ell}) \bigr| 
\leq 2 \delta_k + r_{\ell'} |\alpha_k(\mathbf j_{\ell'}) - \alpha_k(\mathbf j_{\ell})| 
&\leq 10 \delta_k.   \end{align*}
Eliminating $(r_{\ell}- r_{\ell'})$ from the two inequalities above we find that 
\[ |c_{\ell}- c_{\ell'}| \bigl|\alpha_k(\mathbf i_{\ell}) - \alpha_k(\mathbf j_{\ell}) \bigr| \leq 40 \delta_k. \] 
If we now choose $(\mathbf i_1, \cdots, \mathbf i_{n}), (\mathbf j_1, \cdots, \mathbf j_n) \in \mathbb F^{\ast}$ so that $\bigl|\alpha_k(\mathbf i_{\ell}) - \alpha_k(\mathbf j_{\ell})\bigr|$ is maximal in this class, it follows from (\ref{Fstar}) that $|\alpha_k(\mathbf i_{\ell}) - \alpha_k(\mathbf j_{\ell})| \geq \frac{L \delta_k}{2n(n-1)}$, from which the desired conclusion follows.    
\end{proof} 

We end this section by applying these definitions to the intersections of the sets $S_k$.  
Fix $k \geq 1$, and suppose that the sets $S_1,\dots,S_k$ have been chosen.
Recalling from Subsection \ref{subsec-basic-constr} that $S_k =
\bigcup_{X_k(\mathbf i)=1} I_k(\mathbf i)$ and restricting the scale
factors $r, s \in [1,2]$, we find that any intersection of the form $(x+rS_k)
\cap (y+s S_k)$ is nonempty if and only if there exists at least one pair of
multi-indices $(\mathbf i, \mathbf j)$ such that   
$X_k(\mathbf i) = X_k(\mathbf j) = 1$ and (\ref{int-e1}) holds.
In general, there may be many such pairs $(\mathbf i, \mathbf j)$. 
Given two affine copies of $S_k$ with a large intersection, one of two
cases must arise: either there will be a strong match, in the sense that the number
of internal tangencies will be large, or else all but a few such pairs will be
transverse intersections.  We will need to treat these two situations differently.
As before, the exact definitions are stated for
general $n$-fold intersections of affine copies of $S_k$.

\begin{definition}
Let $\{S_k : k \geq 1\}$ be a sequence of sets constructed as in Subsection \ref{subsec-basic-constr}. Given $\mathbf A_n = \{(c_{\ell}, r_{\ell}) : 1 \leq \ell \leq n \} \subseteq [-4,0] \times [1,2]$, the sets $x_{\ell} + r_{\ell} S_k$ are said to have $L$ internal tangencies (respectively transverse intersections) if 
\[ \# \{(\mathbf i_1, \cdots, \mathbf i_n) \in \mathbb F_{\text{int}} \text{ (resp. } \mathbb F_{\text{tr}}) : X_k(\mathbf i_1) = \cdots =  X_k(\mathbf i_n) = 1 \} = L. \]  The total number of intersections among $x_{\ell} + r_{\ell} S_k$ is defined to be the sum of the numbers of internal tangencies and transverse intersections.    
\end{definition}

A large number of internal tangencies among $c_{\ell} + r_{\ell}S_k$ implies a lower bound on $\#(\mathbb F_{\text{int}})$, which in light of Lemma 2.6 (and regardless of what $S_k$ may be) provides a gain in the form of relative proximity of the translation parameters $\{c_{\ell}\}$. On the other hand, controlling the transverse intersections will be possible only under certain additional assumptions on $S_k$. We take up this issue in Sections \ref{section-maximal} and \ref{section-random}.


\section{Preliminary reductions}
We now begin our analysis of the restricted maximal operator $\mathcal M$ defined in (\ref{max-e3}). In this section, we decompose $\mathcal M$ as a sum of auxiliary restricted maximal operators $\calm_k$, each of which is then replaced by a linearized and discretized operator $\Phi_k$. We will subsequently investigate the $L^p\to L^q$ mapping properties of $\Phi_k$ when acting on functions supported in a fixed compact set. While these reductions 
are well known and have been used extensively in the literature, it is not entirely
straightforward to adapt them to the specific situation at hand, hence we include them
for completeness. 
\subsection{Spatial restriction} \label{subsec-spatial} 
\begin{lemma} \label{spatial-lemma}
Suppose that there are exponents $(p,q)$ with $1 \leq p \leq q < \infty$ and a constant $A > 0$ such that $\mathcal M$ as in (\ref{max-e3}) satisfies 
\begin{equation}  \|\mathcal M f\|_{q} \leq A \|f\|_p \quad \text{ for all } f \in L^p[0,1]. \label{z-e10} \end{equation}  
Then the inequality in (\ref{z-e10}) continues to hold for all $f \in L^p(\mathbb R)$, with the constant $A$ replaced by $4^{\frac{1}{q}}A$. 
\end{lemma}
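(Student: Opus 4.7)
The plan is to exploit the fact that $\calm$ is essentially a local operator in the input variable $x$. Since $r \in (1,2)$ and $y \in S_k \subseteq [1,2]$, we have $ry \in (1,4)$, so $\calm f(x)$ depends only on the values of $f$ on the open interval $(x+1, x+4)$ of length $3$. Moreover, $\calm$ is translation-invariant in $x$: a direct change of variables gives $\calm(f(\cdot - a))(x) = \calm f(x-a)$, so the hypothesis (\ref{z-e10}) automatically extends to any $f$ supported on any unit interval, not just $[0,1]$.

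Given this, I would decompose $f = \sum_{n \in \zz} f_n$ with $f_n := f \cdot \mathbf 1_{[n,n+1]}$, and use subadditivity $\calm f(x) \leq \sum_n \calm f_n(x)$. For each fixed $x$, the only $n$ that can contribute are those with $[n,n+1] \cap (x+1,x+4) \neq \emptyset$, i.e., integers $n$ lying in the open interval $(x, x+4)$. This gives at most four nonzero terms per $x$, a count that is independent of $f$. Equivalently, partitioning $\zz$ into four residue classes mod $4$, one can group $f = \sum_{i=1}^4 F_i$ with $F_i = \sum_{n \in C_i} f_n$ so that for each $x$ and each $i$, at most one summand in $F_i$ contributes.

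The combination step is then standard. By the elementary inequality $(\sum_{j=1}^K a_j)^q \leq K^{q-1} \sum a_j^q$ with $K=4$ applied pointwise, followed by integration,
\begin{equation*}
\|\calm f\|_q^q \leq 4^{q-1} \sum_n \|\calm f_n\|_q^q \leq 4^{q-1} A^q \sum_n \|f_n\|_p^q,
\end{equation*}
where the hypothesis (\ref{z-e10}) has been applied to each translate of $f_n$. Since $p \leq q$, the $\ell^p \hookrightarrow \ell^q$ embedding yields $\sum_n \|f_n\|_p^q \leq \bigl(\sum_n \|f_n\|_p^p\bigr)^{q/p} = \|f\|_p^q$, and extracting $q$-th roots gives $\|\calm f\|_q \leq C_q A \|f\|_p$ with a purely numerical constant $C_q$ of the form $4^{\alpha}$ depending only on $q$ (the precise value being adjustable by how one sets up the count of overlapping unit windows).

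There is no genuine obstacle here; the only mildly careful point is bookkeeping the number of overlapping unit intervals, but this is a fixed small integer determined entirely by the length of the $r$-range, not by any property of the sets $S_k$. The argument uses nothing beyond translation-invariance of $\calm$, the local range of its input, and the $\ell^p$-$\ell^q$ inclusion from $p \leq q$.
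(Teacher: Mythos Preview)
Your proposal is correct and follows essentially the same approach as the paper's proof: both decompose $f$ into pieces $f_n = f\cdot\one_{[n,n+1]}$, use that at each point at most four of the $\calm f_n$ are nonzero (equivalently, the supports of $\calm f_n$ have bounded overlap), apply the hypothesis to each piece, and finish with the $\ell^p\hookrightarrow\ell^q$ inclusion. The only cosmetic differences are that the paper first reduces to compactly supported $f$ and phrases the overlap in terms of $\supp(\calm f_i)\subset[i-4,i]$, whereas you argue directly from $ry\in(1,4)$; your constant $4^{(q-1)/q}$ also differs slightly from the paper's stated $4^{1/q}$, but as you note this is immaterial.
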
 
\begin{proof}
It suffices to prove the assertion for functions $f \in L^p(\rr)$ of arbitrary compact support. Given any such $f$, we can find an integer $R$ such that $f = \sum_{i=-R}^{R} f_i$, where $f_i$ is supported in $[i, i+1]$. Observe that the support of $\mathcal Mf_i$ is contained in $[i-4, i]$, which implies 
\begin{equation}\label{scales-e7}
\begin{split}
\|\mathcal M f \|_q^q =\Big\| \mathcal M \bigl(\sum_i f_i \bigr) \Big\|_q^q 
&\leq \Big\| \sum_i \mathcal M f_i \Big\|_q^q\\ 
&\leq 4 \sum_i \| \mathcal M f_i \|_q^q \leq 4 \sum_{i=-R}^R A^q \|f_i\|_p^q\ .\\
\end{split}
\end{equation}  
In the second line we have used the finitely overlapping supports for $\mathcal M f_i$, and then applied (\ref{z-e10}) to each $f_i$. If $p\leq q$, we estimate the last sum in (\ref{scales-e7}) by
\begin{equation*}
4A^q \sum_{i=-R}^R \big(\|f_i\|_p^p\big)^{\frac{q}{p}}
\leq 4 A^q \Big[\sum_{i=-R}^R \|f_i\|_p^p\Big]^{\frac{q}{p}}
\leq 4A^q \|f\|_p^q\ .
\end{equation*}
\end{proof} 
We will henceforth assume that all functions are supported on $[0,1]$, so that $\mathcal M$ is supported within the fixed compact set $[-4,0]$.

\subsection{Linearization and discretization} \label{subsec-lin-disc}

Define the auxiliary restricted maximal operators
\begin{equation}\label{max-e101}
\calm_k f(x):=\sup_{1<r<2} \Big| \int f(x+ry) \sigma_k(y)dy\Big| \quad \text{ where } 
\quad \sigma_k=\phi_{k+1}-\phi_k.
\end{equation} 
Then
\[ \mathcal Mf \leq \mathcal N f+ \sum_{k=1}^{\infty} \mathcal M_k|f|, \]
where $\mathcal N f(x)=\sup_{1<r<2} \int |f(x+ry)| \phi_1(y)dy$. It is an easy exercise to
deduce from H\"older's inequality that $\|\mathcal N f\|_q\leq 4^{1/q}\|\mathcal N f\|_\infty
\leq 4^{1/q}\|\phi_1\|_{p'} \|f\|_p$ for any $p,q\in [1,\infty]$; 
the main task is to estimate $\calm_k$ with $k\geq 1$. 
We begin by discretizing each $\mathcal M_k$ in the space of affine transformations. Specifically, we decompose the spaces of translations $x$ and dilations $r$ (i.e. the intervals $[-4,0]$ and $[1,2]$) into disjoint intervals $\{Q_i\}$ and $\{R_i\}$ respectively, of length $\delta_{k+1}^L$, where $L$ is an integer to 
be fixed at the end of this subsection. The centers of $Q_i$ 
and $R_i$ are denoted by $c_i$ and $r_i$ respectively. Let 
\[ \mathcal C = \{ c_i : 1 \leq i \leq 4\delta_{k+1}^{-L} \}, \quad \mathcal R = \{r_i : 1 \leq i \leq \delta_{k+1}^{-L} \}. \]

\begin{proposition}\label{lin-prop1}
Fix $1< p<\infty$.  Then there is a large integer $L = L(p)$ and a small constant
$\eta=\eta(p)>0$ such that the following conclusions hold:
\begin{enumerate}[(a)] 
\item For every $f\in C_c[0,1]$, there are measurable functions $c(x)$ and $r(x)$ depending
on $f$ and taking values in the discrete sets $\mathcal C$ and $\mathcal  R$ respectively, such that
\begin{equation}\label{otter-e1}
\calm_kf(x)\leq 4 |\Phi_k f(x)|+\mathcal E_k f(x),
\end{equation}
where 
$$
\Phi_k f(x)= \int f(z)  V_{k,x}(z)dz, \quad \text{ with } \quad
V_{k,x}(z)=\sigma_k\left(\frac{z-c(x)}{r(x)}\right).$$
\item Both $\Phi_kf$ and $\mathcal E_kf$ are supported on $[-4,0]$. 
\item For every $q\geq 1$ there is a constant $C_{p,q}$ such that
\begin{equation}\label{lin-e2}
\|\mathcal E_k f\|_q\leq C_{p,q} 2^{-k\eta} \|f\|_p.
\end{equation}
\end{enumerate}
\end{proposition}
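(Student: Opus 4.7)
The proof is a standard linearization/discretization scheme from harmonic analysis (cf.\ \cite{bourg-86}, \cite{schlag-thesis}), with the only delicate point being a careful pointwise bound on the discretization error that exploits the piecewise-constant structure of $\sigma_k$.

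\emph{Linearization and discretization.} Since $f\in C_c[0,1]$, a measurable selection lets me pick $\tilde{r}:[-4,0]\to[1,2]$ such that $\bigl|\int f(x+\tilde{r}(x)y)\sigma_k(y)\,dy\bigr|\geq\tfrac12\calm_k f(x)$. I then set $r(x)\in\calr$ to be the center of the unique interval $R_i$ (of length $\delta_{k+1}^L$) containing $\tilde{r}(x)$, and $c(x)\in\mathcal{C}$ the center of the $Q_i$ containing $x$; these are Borel measurable with $|x-c(x)|,\,|\tilde{r}(x)-r(x)|\leq\tfrac12\delta_{k+1}^L$. A change of variables $z=x+\tilde{r}(x)y$, combined with the estimate $1/\tilde{r}(x)\leq 1$, gives
\[ \calm_k f(x)\leq 2\Bigl|\int f(z)\sigma_k\bigl(\tfrac{z-x}{\tilde{r}(x)}\bigr)\,dz\Bigr|\leq 2|\Phi_k f(x)|+2|E_k f(x)|, \]
where $E_k f(x):=\int f(z)[\sigma_k((z-x)/\tilde{r}(x))-\sigma_k((z-c(x))/r(x))]\,dz$. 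Setting $\mathcal{E}_k f:=2|E_k f|$ yields (\ref{otter-e1}) (the constant $4$ in (a) is simply a generous upper bound on the constant $2$ obtained above). For (b), each integrand is supported in $z\in\supp(f)\cap[c(x)+r(x),\,c(x)+2r(x)]$, which is empty unless $c(x)\in[-4,0]$.

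\emph{Pointwise error bound.} The key structural facts about $\sigma_k=\phi_{k+1}-\phi_k$ are: it is piecewise constant on the basic intervals of $S_{k+1}$ (each of length $\delta_{k+1}$), has at most $O(P_{k+1})$ discontinuity points, and satisfies $\|\sigma_k\|_\infty\leq 2/(P_{k+1}\delta_{k+1})$. Since $\supp(\sigma_k)\subset[1,2]$ and the parameter perturbations are $O(\delta_{k+1}^L)$, each discontinuity of $z\mapsto\sigma_k((z-x)/\tilde{r}(x))$ is displaced by at most $O(\delta_{k+1}^L)$ when $(x,\tilde{r}(x))$ is replaced by $(c(x),r(x))$. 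Hence the two rescaled versions of $\sigma_k$ differ on a symmetric-difference set $A(x)\subset\rr$ of measure $|A(x)|=O(P_{k+1}\delta_{k+1}^L)$, on which the pointwise difference is bounded by $2\|\sigma_k\|_\infty$. Hölder's inequality then yields
\[ |E_k f(x)|\leq 2\|\sigma_k\|_\infty\,|A(x)|^{1/p'}\,\|f\|_p\lesssim P_{k+1}^{-1/p}\,\delta_{k+1}^{L/p'-1}\,\|f\|_p \]
uniformly in $x$. Since $N_k\geq 2$ forces $\delta_{k+1}\leq 2^{-(k+1)}$, choosing any integer $L=L(p)>p'$ produces the required decay $2^{-k\eta}$ with $\eta(p)>0$, and (c) follows from $\|\mathcal{E}_k f\|_q\leq 4^{1/q}\|\mathcal{E}_k f\|_\infty\leq C_{p,q}2^{-k\eta}\|f\|_p$.

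\emph{Main obstacle.} The only nontrivial step is the uniform measure bound $|A(x)|=O(P_{k+1}\delta_{k+1}^L)$: one must simultaneously track how the translation $x\mapsto c(x)$ and the dilation $\tilde{r}(x)\mapsto r(x)$ displace the $O(P_{k+1})$ jumps of the rescaled $\sigma_k$. Once this geometric bookkeeping is in place, everything else is routine, and $L(p)$ is chosen large enough to beat the factor $(P_{k+1}\delta_{k+1})^{-1}$ coming from $\|\sigma_k\|_\infty$ after applying Hölder.
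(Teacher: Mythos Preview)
Your proof is correct and follows essentially the same approach as the paper's: linearize by choosing a near-maximizing $\tilde r(x)$, discretize to the grids $\mathcal C,\mathcal R$, and bound the error via H\"older and the observation that the two rescaled copies of $\sigma_k$ differ on a set of measure $O(P_{k+1}\delta_{k+1}^L)$, yielding the same exponent $\eta=L/p'-1$. The only cosmetic difference is that the paper splits $\sigma_k=\phi_{k+1}-\phi_k$ and controls each piece through a separate symmetric-difference lemma (Lemma~\ref{lin-lemma1}), whereas you argue directly in terms of the displacement of the $O(P_{k+1})$ jump points of $\sigma_k$; these are equivalent formulations of the same estimate.
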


\begin{proof}
Fix a function $f\in C_c[0,1]$.  Since $f$ is bounded, so is $\calm_k f(x)$.
For $x \in Q_i$, there exists $\tilde r_i(x) \in[1,2]$ (depending on $f$ and measurable in $x$) such that
\begin{equation}\label{lin-e10}
\begin{split}
\calm_k f(x)&\leq 2\left|\int f(x+\tilde r_i(x) y)\sigma_k(y)dy\right|\\
&\leq 4\left|\int f(z)\sigma_k\left(\frac{z-x}{\tilde r_i(x)}\right)dz\right|\\
&\leq 4\left|\int f(z)\sigma_k\left(\frac{z-c_i}{r_{j(i)}}\right)dz\right|+\calee_k f(x),\\
\end{split}
\end{equation}
where
\begin{equation}\label{lin-e88}
\calee_k f(x)=4\left|\int f(z)\left[\sigma_k\left(\frac{z-x}{\tilde r_i(x)}\right)
-\sigma_k\left(\frac{z-c_i}{r_{j(i)}}\right)\right] dz\right|
\end{equation}
and the index $j(i) = j(i, x, f)$ is chosen so that $\tilde r_i(x)\in R_{j(i)}$.  Note that $|\tilde r_i(x)
-r_{j(i)}(x)|\leq \delta_{k+1}^L$. Thus (\ref{otter-e1}) holds with $c(x) = c_i$ and $r(x) = r_{j(i)}$ for $x \in Q_i$. We obtain below a pointwise bound for the operator $\mathcal E_k$ which eventually leads to (\ref{lin-e2}). For this estimate, we will not need to use the fact that $\tilde{r}_i$ and $r_{j(i)}$ are functions of $x$, and suppress this dependence in the rest of the proof.  

Since each $\mathcal M_k f$ is supported on $[-4,0]$, it is obvious from (\ref{lin-e10}) and (\ref{lin-e88}) that so are 
$\Phi_kf$ and $\calee_kf$.  It remains to prove (\ref{lin-e2}). For this we observe that 
\begin{equation}\label{lin-e11}
\begin{split}
|\calee_k f(x)|\leq &
4\left|\int f(z)\left[\phi_{k+1}\left(\frac{z-x}{\tilde r_i}\right)
-\phi_{k+1}\left(\frac{z-c_i}{r_{j(i)}}\right)\right] dz\right|\\
&+ 
4\left|\int f(z)\left[\phi_{k}\left(\frac{z-x}{\tilde r_i}\right)
-\phi_{k}\left(\frac{z-c_i}{r_{j(i)}}\right)\right] dz\right|. 
\end{split}
\end{equation}
By H\"older's inequality, the first term on the right side of (\ref{lin-e11}) is bounded by
\begin{align*}
&\|f\|_p\,\left\|\phi_{k+1}\left(\frac{ \cdot -x_i}{\tilde r_i}\right)
-\phi_{k+1}\left(\frac{ \cdot -c_i}{r_{j(i)}}\right)\right\|_{L^{p'}(\cdot)}\\
&=\frac{1}{P_{k+1}\delta_{k+1}}\|f\|_p\,\Big\|
\sum_m(\one_{x_i+\tilde r_iI_m^{(k+1)}}-\one_{c_i+ r_{j(i)}I_m^{(k+1)}})\Big\|_{p'}\\
&\leq \frac{2^{1/p}}{P_{k+1}\delta_{k+1}}\|f\|_p\,\Big\|
\sum_m(\one_{x_i+\tilde r_iI_m^{(k+1)}}-\one_{c_i+ r_{j(i)}I_m^{(k+1)}})\Big\|_{1}^{1/p'}\\
&\leq \frac{2^{1/p}}{P_{k+1}\delta_{k+1}}\|f\|_p\cdot \left(
\sum_m|(x_i+\tilde r_iI_m^{(k+1)})\triangle(c_i+ r_{j(i)}I_m^{(k+1)})|\right)^{1/p'}. 
\end{align*}
By Lemma \ref{lin-lemma1} below, each symmetric difference 
$(x_i+\tilde r_iI_m^{(k+1)})\triangle(c_i+ r_{j(i)}I_m^{(k+1)})$ has measure bounded by 
$3\delta_{k+1}^L$.  Hence the last expression is bounded by
\begin{align*}
 \frac{2^{1/p}}{P_{k+1}\delta_{k+1}}\|f\|_p \Big(P_{k+1}\delta_{k+1}^L\Big)^{1/p'}
 &\leq \frac{2^{1/p}\delta_{k+1}^{(L-1)/p'}}{(P_{k+1}\delta_{k+1})^{1/p}}\|f\|_p\\
&\leq 2^{1/p}\delta_{k+1}^{\frac{L}{p'}-1}\|f\|_p \leq C 2^{-(k+1)\eta}\|f\|_{p},
\end{align*}
where $\eta=\frac{L}{p'}-1$ is positive for large enough $L$ whenever $p>1$.
We have used the trivial bounds $P_{k+1}\geq 1$ and $N_k\geq 2$.
The second term in (\ref{lin-e11}) is bounded similarly, with $P_{k+1},\delta_{k+1}$ 
replaced by $P_k,\delta_k$.
Finally, (\ref{lin-e2}) follows from the pointwise bound above and the fact 
that $\calee_k$ are supported on the bounded interval $[-4,0]$.
\end{proof}

\begin{lemma}\label{lin-lemma1}
Let $0<t<1$, $\frac{1}{2}<r,s<2$.  Then for any $x,y\in\rr$ we have
$$
\left|[x,x+rt]\triangle [y,y+st]\right|\leq 3\eta
$$
whenever $\eta<t/2$ and $|x-y|<\eta$, $|r-s|<\eta$.
\end{lemma}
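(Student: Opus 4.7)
The plan is to reduce the problem to a clean algebraic identity and then apply the triangle inequality twice. The lemma is elementary; the only thing to watch is ruling out the non-overlap configuration.

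By the symmetry in the statement, I would first assume $x \leq y$ and write $A = [x, x+rt]$, $B = [y, y+st]$. The crucial preliminary observation is that $A \cap B \ne \emptyset$. This uses the hypothesis $\eta < t/2$ together with $r > 1/2$: indeed $rt > t/2 > \eta > y-x$, so $y \in A$. Ruling out the disjoint case is essential, because if $A$ and $B$ were disjoint then $|A \triangle B| = rt + st$, which is of order $t$ rather than $\eta$.

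Once overlap is established, both $A \cup B = [\min(x,y),\, \max(x+rt, y+st)]$ and $A \cap B = [\max(x,y),\, \min(x+rt, y+st)]$ are intervals. Writing $|A \triangle B| = |A \cup B| - |A \cap B|$ and using $\max(u,v) - \min(u,v) = |u-v|$ on each pair of endpoints, the cross terms cancel and one gets the clean identity
$$
|A \triangle B| \;=\; |x - y| \;+\; \bigl|(x + rt) - (y + st)\bigr|.
$$
From here the estimate is immediate: the first summand is $< \eta$ by assumption, and the triangle inequality together with $|r-s| < \eta$ and $0 < t < 1$ gives
$$
\bigl|(x+rt)-(y+st)\bigr| \;\leq\; |x-y| + |r-s|\, t \;<\; \eta + \eta \;=\; 2\eta.
$$
Adding the two bounds yields $|A \triangle B| < 3\eta$, as required.

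There is no substantive obstacle: the only subtlety is verifying that the non-overlap case cannot occur, which is exactly the role of the two quantitative hypotheses $\eta < t/2$ and $r,s > 1/2$. The rest is a direct endpoint computation that the overlap structure makes transparent.
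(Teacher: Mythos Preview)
Your proof is correct and follows essentially the same approach as the paper: both assume $x\leq y$, rule out the disjoint case via $y-x<\eta<t/2<rt$, and then compute the symmetric difference from the two endpoint discrepancies. The only cosmetic difference is that the paper splits into the two cases $x+rt\lessgtr y+st$ explicitly, whereas your absolute-value identity $|A\triangle B|=|x-y|+|(x+rt)-(y+st)|$ handles both at once.
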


\begin{proof}
We may assume without loss of generality that $x\leq y$. Observe first that the two intervals
cannot be disjoint, since $y-x<\eta<\frac{t}{2}<rt$.  Hence we must have either
$x\leq y\leq x+rt\leq y+st$ or $x\leq y\leq y+st\leq x+rt$. In the first case, the
symmetric difference has measure $(y-x)+(y+st-x-rt)=2(y-x)+t(r-s)\leq 3\eta$.
In the second case, its measure is $(y-x)+(x+rt-y-st)=(r-s)t\leq\eta$.

\end{proof}


\subsection{The interpolation argument}\label{sub-interpolation}

We now turn to the question of proving $L^p \rightarrow L^q$ bounds for $\Phi_k$. 
In the next lemma we show how such bounds follow from a restricted strong-type 
estimate for the ``adjoint'' operator $\Phi_k^{\ast}$ given by
\begin{equation} 
\Phi_{k}^{\ast}g(z) = \int g(x) V_{k,x}(z) \, dx. \label{adjoint} 
\end{equation}
Although similar interpolation arguments are ubiquitous in the literature,
the sequence of steps in the proof is somewhat more complicated than usual, due to the
additional challenge of keeping track of the dependence of the operator norm 
of $\Phi_k^{\ast}$ on $k$. 
     
\begin{lemma} \label{interpolation-lemma}
Let $\Phi_k^{\ast}$ be as in (\ref{adjoint}) and $q_0 \geq 2$. Suppose that $\Phi_k^{\ast}$ 
obeys the restricted strong-type estimate 
\begin{equation} || \Phi_k^{\ast} \mathbf 1_{\Omega} ||_{q_0} \leq 2^{-k \eta_0}|\Omega|^{\frac{q_0-1}{q_0}} \quad \text{ for all sets } \Omega \subseteq [0,1]  \label{pseudo-restricted} \end{equation}  
with some $\eta_0>0$.
Then for any $p > \frac{q_0}{q_0-1}$ there is an $\eta(p)>0$ such that $\Phi_k$ is bounded from $L^p[0,1]$ to $L^{p(q_0-1)}[-4,0]$ with operator norm bounded by $2^{-k\eta(p)}$. 
\end{lemma}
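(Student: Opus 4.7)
The plan is to combine a trivial uniform-in-$k$ bound at the endpoint $(1,1)$ with the hypothesized restricted strong-type bound at $(q_0',q_0)$ via real interpolation, and then pass to $\Phi_k$ by duality.

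First I would observe that $\Phi_k^{\ast}$ is bounded on $L^1$ with operator norm at most $4$, uniformly in $k$. Indeed, by Fubini's theorem and the fact that $\sigma_k = \phi_{k+1} - \phi_k$ is a difference of two probability densities, $\|V_{k,x}\|_{L^1(dz)} = r(x)\|\sigma_k\|_1 \leq 2 \cdot 2 = 4$, which yields $\|\Phi_k^{\ast}g\|_1 \leq 4\|g\|_1$. This gives the ``free'' endpoint estimate that costs nothing in $k$.

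Next I would apply Marcinkiewicz interpolation (in the Stein--Weiss formulation permitting one endpoint to be only a restricted strong-type bound) between the $(1,1)$ strong-type bound and the hypothesized restricted strong-type $(q_0',q_0)$ bound with constant $2^{-k\eta_0}$. For each $\theta \in (0,1)$, define
\[ \frac{1}{p_\theta'} = 1 - \frac{\theta}{q_0}, \qquad \frac{1}{q_\theta} = 1 - \frac{\theta(q_0-1)}{q_0}. \]
A short calculation gives $\tfrac{1}{p_\theta'} - \tfrac{1}{q_\theta} = \theta(q_0-2)/q_0 \geq 0$; this is precisely where the standing hypothesis $q_0 \geq 2$ enters, ensuring $p_\theta' \leq q_\theta$, which is the condition under which real interpolation upgrades the restricted weak-type bounds at the two endpoints to a genuine strong-type bound $\|\Phi_k^{\ast}g\|_{q_\theta} \leq C_\theta \cdot 2^{-k\theta \eta_0} \|g\|_{p_\theta'}$.

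Finally, dualize to obtain $\Phi_k : L^{q_\theta'} \to L^{p_\theta}$ with the same operator norm. A direct computation yields $p_\theta = q_0/\theta$ and $q_\theta' = q_0/(\theta(q_0-1))$, so $p_\theta = q_\theta'(q_0-1)$. Writing $p := q_\theta'$, this reads $\Phi_k : L^p \to L^{p(q_0-1)}$ with norm at most $C(p) \cdot 2^{-k\eta(p)}$, where $\eta(p) = q_0\eta_0/(p(q_0-1)) > 0$. As $\theta$ sweeps the open interval $(0,1)$, $p = q_0/(\theta(q_0-1))$ sweeps $(q_0/(q_0-1),\infty)$, which is exactly the claimed range $p > q_0'$. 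The main obstacle is little more than careful bookkeeping of exponents and tracking the $k$-dependence of the interpolation constant; the interpolation-and-duality strategy itself is standard in this context.
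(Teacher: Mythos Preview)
Your proof is correct and follows the same overall strategy as the paper---combine the trivial uniform-in-$k$ bound $\|\Phi_k^\ast\|_{L^1\to L^1}\le C$ with the restricted estimate at $(q_0',q_0)$, interpolate, and dualize---but you take a more direct route. You apply the restricted-weak-type version of Marcinkiewicz once between the two endpoints and rely on the fact that the resulting strong-type constant has the multiplicative form $C_\theta M_0^{1-\theta}M_1^\theta$, which immediately gives the decay $2^{-k\theta\eta_0}$.

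The paper instead proceeds in two stages. First it invokes the same interpolation theorem (Bennett--Sharpley, Chapter~4, Theorem~5.5) but only records a strong-type bound at each intermediate $(p,q)$ with a constant \emph{uniform} in $k$. It then recovers the decay by an explicit H\"older step, $\|\Phi_k^\ast \mathbf 1_\Omega\|_q \le \|\Phi_k^\ast \mathbf 1_\Omega\|_{q_0}^\theta \|\Phi_k^\ast \mathbf 1_\Omega\|_1^{1-\theta} \le C 2^{-k\eta_0\theta}|\Omega|^{1/p}$, combines this restricted strong-type bound with the uniform strong-type bound (via Bennett--Sharpley, Chapter~4, Theorem~5.3) to get a decaying weak-type bound, and finally applies Marcinkiewicz a second time between two intermediate points. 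Your argument is shorter; the paper's is more self-contained in that the $k$-dependence is obtained by an elementary H\"older computation rather than by tracking constants through the black-box interpolation theorem. If you use your version, it would be worth citing a source (e.g.\ Stein--Weiss, Chapter~V, Theorem~3.15, or the proof in Bennett--Sharpley) where the multiplicative constant $C_\theta M_0^{1-\theta}M_1^\theta$ is made explicit.
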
 
  
\begin{proof}
The operator $\Phi_k^{\ast}$ satisfies a trivial $L^1 \rightarrow L^1$ bound, with operator norm bounded by a constant independent of $k$. On one hand, by a standard interpolation theorem for operators satisfying restricted weak-type endpoint bounds (Chapter 4, Theorem 5.5, \cite{BS}), $\Phi_k^{\ast}$ is bounded from $L^p \rightarrow L^q$
for all $(p,q)$ satisfying $p' = q_0/\theta$ and $q' = q_0/(\theta (q_0-1))$, $0 < \theta < 1$,
with norm bounded uniformly in $k$ but not necessarily decaying as $k\to\infty$.
On the other hand, by H\"older's inequality 
\begin{align*} 
||\Phi_k^{\ast}\mathbf 1_{\Omega}||_q &\leq ||\Phi_k^{\ast}\mathbf 1_{\Omega}||_{q_0}^{\theta} ||\Phi_k^{\ast}\mathbf 1_{\Omega}||_1^{1 - \theta} \leq C2^{-k\eta_0 \theta} |\Omega|^{\frac{1}{p}}.
\end{align*} 
By Theorem 5.3 of \cite[Chapter 4]{BS}), the last two statements imply that the weak-type $(p,q)$ norm of $\Phi_k^{\ast}$ is bounded by $C2^{-k\eta_0 \theta}$ (possibly with a different 
constant). Note
that $p\leq q$, hence we may apply the Marcinkiewicz interpolation theorem (Theorem 4.13
and Corollary 4.14, Chapter 4, \cite{BS}) to two such pairs $(p,q)$ to get the desired
strong-type Lebesgue mapping properties on all the intermediate spaces and with
the operator norms decaying exponentially in $k$. The statement for $\Phi_k$ follows by duality.  
\end{proof} 
Combining Lemma \ref{interpolation-lemma} with Proposition \ref{lin-prop1} and Lemma
\ref{spatial-lemma}, we arrive at the following corollary. 

\begin{corollary} \label{interpolation-cor}
Assume that (\ref{pseudo-restricted}) holds. Then for every $\frac{q_0}{q_0-1} < p < \infty$, 
there is an $\eta(p)>0$ such that 
\[\|\mathcal M_k f\|_{(q_0-1)p} \leq 2^{-k \eta(p)} \|f\|_p \]
for all $f \in L^p[0,1]$. Moreover, the restricted maximal operator $\mathcal M$ is 
bounded from $L^p(\mathbb R)$ to $L^{(q_0-1)p}(\mathbb R)$.   
\end{corollary}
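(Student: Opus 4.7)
The corollary is essentially a bookkeeping assembly of the three preceding ingredients. The plan is as follows.

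First, feed the hypothesis (\ref{pseudo-restricted}) into Lemma \ref{interpolation-lemma}: this produces, for each $p > q_0/(q_0-1)$, a positive exponent $\eta_1 = \eta_1(p)$ such that $\Phi_k$ maps $L^p[0,1]$ into $L^{(q_0-1)p}[-4,0]$ with operator norm at most $2^{-k\eta_1}$. Next, apply Proposition \ref{lin-prop1} with the same $p$ and with $q = (q_0-1)p$: this gives a decomposition $\mathcal M_k f(x) \leq 4|\Phi_k f(x)| + \mathcal E_k f(x)$ on $[-4,0]$, together with the error estimate $\|\mathcal E_k f\|_{(q_0-1)p} \leq C_{p,q} 2^{-k\eta_2} \|f\|_p$ for a second positive exponent $\eta_2 = \eta_2(p)$. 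Combining these bounds by the triangle inequality yields
\[
\|\mathcal M_k f\|_{(q_0-1)p} \leq 4\,\|\Phi_k f\|_{(q_0-1)p} + \|\mathcal E_k f\|_{(q_0-1)p} \leq \bigl(4\cdot 2^{-k\eta_1} + C_{p,q}\, 2^{-k\eta_2}\bigr)\|f\|_p,
\]
which, after replacing $\min(\eta_1,\eta_2)$ by a slightly smaller $\eta(p)>0$ to absorb constants, gives the desired bound $\|\mathcal M_k f\|_{(q_0-1)p} \leq 2^{-k\eta(p)}\|f\|_p$ for every $f \in L^p[0,1]$.

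For the second statement, recall from the start of Subsection \ref{subsec-lin-disc} the pointwise bound
\[
\mathcal M f(x) \leq \mathcal N f(x) + \sum_{k=1}^{\infty} \mathcal M_k |f|(x),
\]
together with the elementary estimate $\|\mathcal N f\|_{(q_0-1)p} \leq 4^{1/((q_0-1)p)}\|\phi_1\|_{p'}\|f\|_p$ derived there via H\"older's inequality. Summing the $\mathcal M_k$ bounds just obtained, the geometric series $\sum_k 2^{-k\eta(p)}$ converges, so
\[
\|\mathcal M f\|_{(q_0-1)p} \leq \|\mathcal N f\|_{(q_0-1)p} + \sum_{k=1}^{\infty}\|\mathcal M_k |f|\|_{(q_0-1)p} \leq C(p)\,\|f\|_p
\]
for every $f \in L^p[0,1]$. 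Since $q_0\geq 2$ gives $p \leq (q_0-1)p$, Lemma \ref{spatial-lemma} applies and promotes this $L^p[0,1]\to L^{(q_0-1)p}(\mathbb R)$ bound to the claimed $L^p(\mathbb R)\to L^{(q_0-1)p}(\mathbb R)$ bound.

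There is no real obstacle here beyond checking that the two sources of decay in $k$ (from $\Phi_k$ via Lemma \ref{interpolation-lemma}, and from $\mathcal E_k$ via Proposition \ref{lin-prop1}) can be combined into a single positive exponent that is summable in $k$; this is precisely why Proposition \ref{lin-prop1} was stated with a factor $2^{-k\eta}$ rather than merely a finite bound. The condition $q_0\geq 2$ is used exactly once, namely to ensure $p\leq (q_0-1)p$ so that the finitely-overlapping-supports argument in Lemma \ref{spatial-lemma} can be run, and the lower bound $p>q_0/(q_0-1)$ is inherited from Lemma \ref{interpolation-lemma}.
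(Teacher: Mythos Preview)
Your proposal is correct and follows precisely the approach the paper intends: the paper's own ``proof'' is the single sentence ``Combining Lemma \ref{interpolation-lemma} with Proposition \ref{lin-prop1} and Lemma \ref{spatial-lemma}, we arrive at the following corollary,'' and you have faithfully unpacked how those three ingredients are combined. The only minor point to note is that Proposition \ref{lin-prop1} is stated for $f\in C_c[0,1]$, so the passage to general $f\in L^p[0,1]$ uses a routine density argument, which is implicit in both your writeup and the paper.
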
 

\section{Transverse correlations} \label{section-maximal}

We now come to the main part of our proof.  The first step, to be accomplished in this 
section, is to reduce the problem of deriving restricted strong-type $L^{\frac{n}{n-1}}
\rightarrow L^n$ bounds on $\Phi_k^{\ast}$ to estimating $n$-fold correlations
between affine copies of $S_k$ with few internal tangencies.  
The construction of a sequence of sets $S_k$ that will meet the correlation condition 
in question will be addressed in Section \ref{section-random}. 
We start by setting up the
notation for such $n$-fold correlations and giving a precise statement of our correlation
criterion. 

Throughout this section, $n\geq 2$ will be a fixed even integer.
We will use $\mathfrak A = \mathfrak A[n,k,L]$ to denote the finite collection of all 
$n$-tuples of translation-dilation pairs that arise from
the $\delta_{k+1}^L$ discretization procedure in Section \ref{subsec-lin-disc}:
\[ 
\mathfrak A := \{\mathbf A_n : \mathbf A_n = \{(c_{\ell}, r_{\ell}) : 1 \leq \ell \leq n \}, \; c_{\ell} \in \mathcal C, r_{\ell} \in \mathcal R \}. 
\]
In particular, we have $\#(\mathfrak A) \leq 4 \delta_{k+1}^{-2Ln}$. We will also use 
${\mathfrak A}_{\text{tr}}$ to denote the subcollection of those $n$-tuples which 
have few internal tangencies:
$$
{\mathfrak A}_{\text{tr}}=\{\mathbf A_n \in \mathfrak A: \ 
\#(\mathbb F_{\text{int}}[n,k;\mathbf A_n]) < P_k^{1 - \epsilon_0}\},
$$
where $\epsilon_0\in (0,1)$ is a fixed constant (eventually, we will 
let $\epsilon_0=\frac{1}{2}$).
We write ${\mathfrak A}_{\text{int}}={\mathfrak A}\setminus {\mathfrak A}_{\text{tr}}$.

\begin{definition}\label{correlation-condition}
Let ${\mathbf A}_n\in {\mathfrak A}$, and let $f_1,\dots,f_n$ be functions on $\rr$.  We define
the {\em $n$-fold correlation of $f_1,\dots,f_n$ according to ${\mathbf A}_n$} as follows:
\begin{equation}\label{corr-e1}
\Lambda({\mathbf A}_n;f_1,\dots,f_n)=
\int \prod_{\ell = 1}^{n} f_{\ell} \Bigl(\frac{z-c_{\ell}}{r_{\ell}} \Bigr) \, dz.
\end{equation}
\end{definition}
If $f_1=\dots=f_n=f$, we will write $\Lambda({\mathbf A}_n;f,\dots,f)=
\Lambda({\mathbf A}_n;f)$.

The main result in this section is the following. 
\begin{proposition} \label{prop-transverse}
Suppose that for some positive even integer $n \geq 1$ and small constant $\epsilon_0 > 0$, the following transverse correlation condition holds: 
\begin{equation} 
\sup_{{\mathbf A}_n\in{\mathfrak A}_{\text{tr}}}|\Lambda({\mathbf A}_n;\sigma_k) |
\leq C_0(k,n, \epsilon_0). 
\label{transverse-cond}  \end{equation} 
Then the operator $\Phi_k^{\ast}$ defined in (\ref{adjoint}) satisfies the
restricted strong-type estimate  
\begin{equation}\label{critter-weak}
\sup_{\Omega \subseteq [0,1]} \frac{\|\Phi_k^{\ast}\mathbf 1_{\Omega}\|_{n}}{|\Omega|^{\frac{n-1}{n}}} \leq C \left[\max \left(
\frac{2^n n^4 P_k^{\epsilon_0-1}}{(P_{k+1} \delta_{k+1})^{n-1}}, C_0(k,n, \epsilon_0)\right) \right]^{\frac{1}{n}},   
\end{equation}  
where $C > 0$ is an absolute constant independent of $n$, $k$ and $\epsilon_0$.
\end{proposition}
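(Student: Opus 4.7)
The plan is to expand $\|\Phi_k^*\mathbf 1_\Omega\|_n^n$ as an $n$-fold integral, partition the domain of integration according to the structure of the induced discretized affine parameters, and control the two resulting pieces using the assumed correlation bound for transverse configurations and Lemma~\ref{internal-tangency} for internal tangencies.

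First, since $n$ is even, Fubini and the definition \eqref{adjoint} give
\begin{equation*}
\|\Phi_k^*\mathbf 1_\Omega\|_n^n = \int_{\Omega^n} \Lambda\bigl(\mathbf A_n(\mathbf x); \sigma_k\bigr)\, d\mathbf x, \quad \mathbf A_n(\mathbf x) := \{(c(x_\ell), r(x_\ell)) : 1 \leq \ell \leq n\},
\end{equation*}
where the inner integral is the $n$-fold correlation from Definition~\ref{correlation-condition}. Since $c,r$ take values in the finite discrete sets $\mathcal C, \mathcal R$, grouping $\mathbf x$ according to $\mathbf A_n(\mathbf x) \in \mathfrak A$ and bounding by absolute values yields
\begin{equation*}
\|\Phi_k^*\mathbf 1_\Omega\|_n^n \leq \sum_{\mathbf A_n \in \atr} |\Lambda(\mathbf A_n;\sigma_k)|\, \mu(\mathbf A_n) + \sum_{\mathbf A_n \in \aint} |\Lambda(\mathbf A_n;\sigma_k)|\, \mu(\mathbf A_n),
\end{equation*}
where $\mu(\mathbf A_n) := |\{\mathbf x \in \Omega^n : \mathbf A_n(\mathbf x) = \mathbf A_n\}|$.

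The transverse sum is handled directly by hypothesis \eqref{transverse-cond} together with the trivial bound $\sum_{\mathbf A_n} \mu(\mathbf A_n) \leq |\Omega|^n \leq |\Omega|^{n-1}$, contributing at most $C_0(k,n,\epsilon_0)\,|\Omega|^{n-1}$. For the internal-tangency sum, I would first record the trivial pointwise bound
\begin{equation*}
|\Lambda(\mathbf A_n;\sigma_k)| \leq \|\sigma_k\|_\infty^{n-1} \int |\sigma_k((z-c_1)/r_1)|\, dz \leq \frac{C}{(P_{k+1}\delta_{k+1})^{n-1}},
\end{equation*}
using that $\|\sigma_k\|_\infty \leq 2/(P_{k+1}\delta_{k+1})$ and that the inner factor has support of length $O(1)$ with $\|\sigma_k\|_1 \leq 2$.

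The key gain comes from Lemma~\ref{internal-tangency}: any $\mathbf A_n \in \aint$ has $\#(\mathbb F_{\text{int}}) \geq P_k^{1-\epsilon_0}$, forcing some pair of translations to satisfy $|c_\ell - c_{\ell'}| \leq 80n(n-1)/P_k^{1-\epsilon_0}$. For each of the $\binom{n}{2}$ choices of pair $(\ell,\ell')$, fixing the other $n-2$ coordinates in $\Omega$ and then fixing $x_{\ell'} \in \Omega$ confines $x_\ell$ to a set of measure at most $160n(n-1)/P_k^{1-\epsilon_0}$ (up to the negligible $\delta_{k+1}^L$ granularity of $\mathcal C$). Summing gives $\sum_{\mathbf A_n \in \aint} \mu(\mathbf A_n) \leq C n^4 |\Omega|^{n-1}/P_k^{1-\epsilon_0}$, so the internal contribution is at most $C\,2^n n^4 |\Omega|^{n-1} P_k^{\epsilon_0 - 1}/(P_{k+1}\delta_{k+1})^{n-1}$. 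Adding the two contributions and taking the $n$-th root yields \eqref{critter-weak}.

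The only real step with content is the geometric one, and that has already been carried out in Lemma~\ref{internal-tangency}; the remainder is bookkeeping, so I expect no substantive obstacle beyond tracking constants carefully. The slightly delicate point is the trivial $L^\infty$ bound on $\Lambda$: one must use the $L^\infty$ norm for $n-1$ of the factors and the $L^1$ norm for the last, rather than naively bounding by $\|\sigma_k\|_\infty^n$ times a support size, in order to produce exactly the exponent $n-1$ on $(P_{k+1}\delta_{k+1})^{-1}$ that matches the statement of the proposition.
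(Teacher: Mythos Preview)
Your proposal is correct and follows essentially the same route as the paper: expand $\|\Phi_k^*\mathbf 1_\Omega\|_n^n$ as an integral of $\Lambda(\mathbf A_n(\mathbf x);\sigma_k)$ over $\Omega^n$, split according to whether $\mathbf A_n(\mathbf x)\in\atr$ or $\aint$, invoke \eqref{transverse-cond} on the first part, and combine the trivial bound $|\Lambda|\leq 2^{n+1}/(P_{k+1}\delta_{k+1})^{n-1}$ with Lemma~\ref{internal-tangency} (plus the observation $|x_\ell-c(x_\ell)|\leq \delta_{k+1}^L\leq P_k^{-1+\epsilon_0}$) on the second. Your derivation of the trivial bound via $\|\sigma_k\|_\infty^{n-1}\|\sigma_k\|_1$ is in fact a slightly cleaner packaging of the paper's Lemma~\ref{trivial-estimate-lemma}, which expands $\sigma_k=\phi_{k+1}-\phi_k$ term by term to reach the same estimate.
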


\noindent{\em{Remarks:}} 
\begin{enumerate}
\item Our goal will be to construct sets $S_k$ for which $C_0(k,n,\epsilon_0)$, and indeed
the right hand side of (\ref{critter-weak}), decay exponentially in $k$. It will then 
follow from Corollary \ref{interpolation-cor} that $\mathcal M$ is bounded from 
$L^p(\mathbb R) \rightarrow L^{(n-1)p}(\mathbb R)$ for all $p > \frac{n}{n-1}$. 

\item The heuristic reason why (\ref{transverse-cond}) should hold is that, essentially,
$\sigma_k$ are highly oscillating random functions with $\int\sigma_k=0$, so that two affine 
copies of $\sigma_k$ with generic translation and scaling parameters should be close to
orthogonal.  In other words, there should be a lot of cancellation in the integral
defining $\Lambda({\mathbf A}_n;\sigma_k)$.  The only exception to this is when relatively
close correlations between two or more such copies are forced by a large number of internal
tangencies.

\end{enumerate}

\medskip

In the proof of the proposition we will need the following
trivial bound (ignoring all cancellation) on $\Lambda({\mathbf A}_n;\sigma_k)$.

\begin{lemma} \label{trivial-estimate-lemma}
For all $k \geq 1$ and ${\mathbf A}_n\in{\mathfrak A}$, we have 
\begin{equation} 
|\Lambda({\mathbf A}_n;\sigma_k)|\leq 
\frac{2^{n+1}}{(P_{k+1} \delta_{k+1})^{n-1}}. 
\label{trivial-estimate} \end{equation}  
\end{lemma}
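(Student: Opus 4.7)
My plan is to establish the estimate by a direct $L^\infty$-$L^1$ bound on each factor in the product defining $\Lambda$, with no attempt to exploit cancellation. The key observation is that $\sigma_k = \phi_{k+1} - \phi_k$ where each $\phi_j = \mathbf{1}_{S_j}/|S_j|$ satisfies $\|\phi_j\|_\infty = 1/(P_j \delta_j)$ and $\|\phi_j\|_1 = 1$. Since the Cantor construction forces $S_{k+1} \subseteq S_k$, we have $P_{k+1}\delta_{k+1} = |S_{k+1}| \leq |S_k| = P_k \delta_k$, so the triangle inequality gives
\[
\|\sigma_k\|_\infty \leq \frac{1}{P_{k+1}\delta_{k+1}} + \frac{1}{P_k \delta_k} \leq \frac{2}{P_{k+1}\delta_{k+1}}, \qquad \|\sigma_k\|_1 \leq 2.
\]

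Next I would apply these bounds to (\ref{corr-e1}): pull the first $n-1$ factors out in $L^\infty$ and retain the $\ell = n$ factor inside the integral. This yields
\[
|\Lambda(\mathbf{A}_n;\sigma_k)| \leq \left(\frac{2}{P_{k+1}\delta_{k+1}}\right)^{n-1}\int \left|\sigma_k\!\left(\frac{z-c_n}{r_n}\right)\right| dz.
\]
A change of variables $w = (z - c_n)/r_n$ converts the remaining integral into $r_n \|\sigma_k\|_1 \leq 2 \cdot 2 = 4$, using $r_n \in [1,2]$. Multiplying out gives the claimed bound $\frac{2^{n+1}}{(P_{k+1}\delta_{k+1})^{n-1}}$.

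There is no real obstacle here; the estimate is essentially a bookkeeping exercise. The point of recording it as a lemma is that it provides the baseline bound to which the transverse correlation condition (\ref{transverse-cond}) must be compared, and which will be used in the proof of Proposition \ref{prop-transverse} to handle the tuples $\mathbf{A}_n \in \mathfrak{A}_{\mathrm{int}}$ (where the internal-tangency count is too large for any cancellation-based estimate to be available).
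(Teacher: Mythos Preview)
Your proof is correct and follows essentially the same approach as the paper's: both ignore cancellation and use a pointwise $L^\infty$ bound on all but one factor together with an $L^1$ bound on the remaining one. The paper first expands $\prod_\ell \sigma_k = \prod_\ell(\phi_{k+1}-\phi_k)$ into $2^n$ terms and bounds each separately, whereas you apply the triangle inequality directly to $\|\sigma_k\|_\infty$ and $\|\sigma_k\|_1$; this is a cosmetic difference, and your version is in fact slightly more streamlined.
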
  
\begin{proof} Recalling that $\sigma_k = \phi_{k+1} - \phi_k$, and expanding the product 
in $\Lambda({\mathbf A}_n;\sigma_k)$, we arrive at the expression  
\begin{equation}
\begin{aligned} 
|\Lambda({\mathbf A}_n;\sigma_k)|
\leq  \sum_{\pmb{\lambda} \in \{0,1\}^n} 
|\Lambda({\mathbf A}_n;\phi_{k+\lambda_1},\dots, \phi_{k+\lambda_{n}}  )|,
 \end{aligned} \label{trest-step1}
\end{equation}
where $\pmb{\lambda} = (\lambda_1, \cdots, \lambda_n)$. We treat each summand separately. 
Suppose first that $\lambda_{\ell_0} = 1$ for some $\ell_0$. Since 
$\phi_{k+1} = (P_{k+1} \delta_{k+1})^{-1} \mathbf 1_{S_{k+1}}$, we may estimate all
factors pointwise by $(P_{k+1} \delta_{k+1})^{-1}$, 
so that
\begin{equation} \label{trest-step2}
\begin{aligned} 
|\Lambda({\mathbf A}_n;\phi_{k+\lambda_1},\dots, \phi_{k+\lambda_{n}}  )|
&\leq \frac{1}{(P_{k+1} \delta_{k+1})^n} \int \mathbf 1_{S_{k+1}}\Bigl(
\frac{z - c_{\ell_0}}{r_{\ell_0}} \Bigr) \, dz  \\ 
&\leq \frac{2P_{k+1}\delta_{k+1}}{(P_{k+1} \delta_{k+1})^n} 
= \frac{2}{(P_{k+1} \delta_{k+1})^{n-1}}. 
\end{aligned} \end{equation} 
If on the other hand $\lambda_{\ell} = 0$ for all $\ell$, we have
\begin{equation} \label{trest-step3}
\begin{aligned} 
|\Lambda({\mathbf A}_n;\phi_{k+\lambda_1},\dots, \phi_{k+\lambda_{n}}  )|
&\leq  \frac{1}{(P_k \delta_k)^n}\int \mathbf 1_{S_{k}}\Bigl( \frac{z - c_{1}}{r_1} \Bigr)\, dz
\\ 
&\leq \frac{2P_k \delta_k}{(P_k \delta_k)^n} = \frac{2}{(P_{k} \delta_{k})^{n-1}} \leq \frac{2}{(P_{k+1} \delta_{k+1})^{n-1}},
\end{aligned} \end{equation} 
where the last step uses the fact that the sequence $\{P_{k} \delta_k\}$ is monotone 
decreasing. Combining (\ref{trest-step1}), (\ref{trest-step2}) and (\ref{trest-step3}) 
yields the desired conclusion. 
\end{proof}

\noindent\textit{Proof of Proposition \ref{prop-transverse}.}
For $x_1,x_2,\dots,x_n\in [0,1]^n$, let
$$
{\mathbf A}(x_1,\dots,x_n)=\{(c(x_{\ell}), r(x_{\ell})) : 1 \leq \ell \leq n \},
$$
where $c(x_{\ell}), r(x_{\ell})$ are chosen as in Section \ref{subsec-lin-disc}.
Thus ${\mathbf A}(x_1,\dots,x_n)\in{\mathfrak A}$.
Let $\Omega \subseteq [0,1]$, then
\begin{align*}
\| \Phi_k^{\ast} \mathbf 1_{\Omega}\|_{n}^n 
&= \left\| \int_{\Omega} V_{k,x}(\cdot) dx \right\|_n^n \\ 
&= \int \prod_{j=1}^{n} \left[ \int_{\Omega} V_{k,x_j}(z)\, dx_j \right] dz \\ 
&= \int_{\Omega^n} \left[ \int \prod_{j=1}^{n} V_{k,x_j}(z) \, dz \right] dx_1 \cdots dx_n \\ 
&= \int_{\Omega^n} \Lambda({\mathbf A}(x_1,\dots,x_n);\sigma_k)\, dx_1\dots dx_n \\
&= \left[\int_{\Theta_1} + \int_{\Theta_2} \right]
\Lambda({\mathbf A}(x_1,\dots,x_n);\sigma_k)\, dx_1\dots dx_n,
\end{align*} 
where 
\begin{align*}
\Theta_1 &= \left\{(x_1, \cdots, x_n) \in \Omega^n : \ 
{\mathbf A}(x_1,\dots,x_n)\in\aint \right\},  \\ 
\Theta_2 &= \left\{(x_1, \cdots, x_n) \in \Omega^n : \ 
{\mathbf A}(x_1,\dots,x_n)\in\atr \right\}.  \\ 
\end{align*}
We first estimate the integral on $\Theta_1$. While the high order of internal 
tangency does not allow a better estimate than (\ref{trivial-estimate}) on the
integrand, the domain of the integration is restricted to a small set. Specifically, 
by Lemma \ref{internal-tangency} we have
\begin{align*} 
\Theta_1 &\subseteq \bigcup_{1 \leq \ell \ne \ell' \leq n}
\Bigl\{(x_1, \cdots, x_n) \in \Omega^n : 
|c(x_{\ell}) - c(x_{\ell'})| \leq \frac{80n(n-1)}{P_k^{1-\epsilon_0}} \Bigr\} \\ 
&\subseteq \bigcup_{1 \leq \ell \ne \ell' \leq n} \Bigl\{(x_1, \cdots, x_n) \in \Omega^n :
 |x_{\ell} - x_{\ell'}| \leq \frac{160n(n-1)}{P_k^{1-\epsilon_0}} \Bigr\}, 
\end{align*} 
where we used that $|x_{\ell}-c(x_\ell)|\leq \delta_{k+1}^{L}\leq \delta_k^{L}\leq P_k^{-1+\epsilon_0}$.
Combining this with Lemma \ref{trivial-estimate-lemma} we obtain   
\begin{equation} 
\begin{aligned}
\int_{\Theta_1} &
\Lambda({\mathbf A}(x_1,\dots,x_n);\sigma_k)\, dx_1\dots dx_n \\  
&\quad \leq \frac{2^{n+1}}{(P_{k+1} \delta_{k+1})^{n-1}} \sum_{1 \leq \ell \ne \ell' \leq n} 
 \int_{\Omega^{n-1}}\Biggl[\int_{|x_{\ell} - x_{\ell'}| 
 \leq \frac{160n(n-1)}{P_k^{1 - \epsilon_0}}} dx_{\ell} \Biggr] \prod_{j \ne \ell} dx_j \\ 
& \quad \leq \frac{2^{n+1} 160 n^2 (n-1)^2}{(P_{k+1} \delta_{k+1})^{n-1}P_k^{1-\epsilon_0}}
 |\Omega|^{n-1} \leq 320 \frac{2^n n^4 P_k^{\epsilon_0-1}}{(P_{k+1} \delta_{k+1})^{n-1}} |\Omega|^{n-1}.
\end{aligned} \label{int-Theta1-est}
\end{equation} 
On the other hand, the desired estimate on the integral on $\Theta_2$ follows directly
from (\ref{transverse-cond}):  
\[
 \int_{\Theta_2} \Lambda({\mathbf A}(x_1,\dots,x_n);\sigma_k)\, dx_1\dots dx_n
 \leq C_0(k,n, \epsilon_0) |\Omega|^n \leq C_0(k,n,\epsilon_0) |\Omega|^{n-1}. \]
The conclusion follows upon combining this with (\ref{int-Theta1-est}).
 
\qed


\section{The random construction} \label{section-random}

\subsection{Selection of the sets $\{S_k\}$}
We are now ready to describe the probabilistic construction of the sets $\{ S_k \}$ 
satisfying the transverse correlation 
condition (\ref{transverse-cond}) with acceptable constants $C_0(k,n,\epsilon_0)$. 
The basic procedure 
is as in Subsection \ref{subsec-basic-constr}, with the crucial additional point that
the sequences $\mathbf X_k, \mathbf Y_k$ are now randomized. 

Here and in the sequel, 
$\{ \epsilon_k : k \geq 1\}$ will be a sequence of small constants with $0<\epsilon_k<\half$,
and $\{N_k: k\geq 1\}$ will be a nondecreasing sequence of large constants with 
$N_1$ large enough.
Specific choices of both sequences will be made in the next section.  
Let $\mathbf X_1 = \{ X_1(i) : 1 \leq i \leq N_1 \}$ be a sequence of independent 
and identically distributed Bernoulli random variables: 
\[ X_1(i) = \begin{cases} 1 &\text{ with probability } p_1 = N_1^{-\epsilon_1}, \\ 
0 &\text{ with probability } 1-p_1. \end{cases} \]
Each realization of the Bernoulli sequence generates a possible candidate for $S_1$:
\[ S_1=S_1(\mathbf X_1) = \bigcup_{\begin{subarray}{c} 1 \leq i \leq N_1 \\ X_1(i) = 1  \end{subarray}}[\alpha_1(i), \alpha_1(i+1)]. \]
In general, at the end of the $k$-th step, we will have selected a realization of 
$S_1, S_2, \cdots, S_k$. At step $k+1$, we will consider an iid Bernoulli 
sequence $\mathbf Y_{k+1}=\{ Y_{k+1}(\overline{\mathbf i}) : \overline{\mathbf i} = (\mathbf i, i_{k+1}) 
\in \mathbb I_{k+1}\}$ with success probability $p_{k+1} = N_{k+1}^{-\epsilon_{k+1}}$, 
and set 
\begin{equation} \label{various-def} 
\left\{
\begin{aligned} 
&\mathbf X_{k+1} = \{X_{k+1}(\overline{\mathbf i}) : \overline{\mathbf i} \in \mathbb I_{k+1} \}, \quad 
X_{k+1}(\overline{\mathbf i}) = X_k(\mathbf i) Y_{k+1}(\overline{\mathbf i}), \\ 
&P_{k+1} = P_{k+1}(\mathbf X_{k+1}) = \sum_{\mathbf i} X_{k+1}(\overline{\mathbf i}), \\
&Q_1 = N_1 p_1, \; Q_{k+1} = P_{k} N_{k+1} p_{k+1} = P_k N_{k+1}^{1- \epsilon_{k+1}}, \\  
&S_{k+1}=S_{k+1}(\mathbf X_{k+1}) = \bigcup_{X_{k+1}(\overline{\mathbf i}) = 1}
 \left[\alpha_{k+1}(\overline{\mathbf i}), \alpha_{k+1}(\overline{\mathbf i}) + \delta_{k+1} \right].
\end{aligned} \right\} \end{equation}        
At step $k+1$, the only random variables are the entries of the sequence $\mathbf Y_{k+1}$
(and hence $\mathbf X_{k+1}$), the sequence $\mathbf X_k$ having already been fixed at the
previous step. Thus at step $k+1$, $P_{k+1}$ is a random variable, whereas $Q_{k+1}$ is not. In fact, $Q_{k+1}$ is the expected value of $P_{k+1}$ conditional on $\mathbf X_{k}$ having been fixed
at the previous step.

For every $k \geq 1$, we have a large sample space of possible choices for $S_k$. The goal of this section is to show that at every stage of the construction a selection can be made that satisfies a specified list of criteria, eventually leading up to (\ref{transverse-cond}). The main result in this section is the following.

\begin{theorem} \label{random-mainprop}
Let $B>0$ be an absolute constant, independent of $k$ and $n$ ($B=10$ will work). Then
there exists a sequence of sets $\{S_k\}$ constructed as described above
(for some realization of the Bernoulli sequences $\mathbf X_1$, $\mathbf Y_k$)
such that all of the following conditions hold: 
\begin{enumerate}[(a)]
\item $2^{-k} \prod_{j=1}^{k} N_j^{1 - \epsilon_j} \leq P_k \leq 2^k \prod_{j=1}^{k} N_j^{1 - \epsilon_j}$. \label{parta}
\item $|P_k - Q_k| \leq B \sqrt{Q_k}$.  \label{partb}
\item The transverse correlation condition (\ref{transverse-cond}) holds with $\epsilon_0=\half$ and \label{partc}
\begin{multline} \label{critter}
C_0(k,n, \half) = 
4^{n+2} n!\, B 2^{k(n+\frac{3}{2})} \\ \times \Bigl[
\prod_{j=1}^k N_j^{-\half+\epsilon_j(n-\half)}\Bigr] N_{k+1}^{n\epsilon_{k+1}} 
\Bigl[ \ln(4^n n!\, B \prod_{j=1}^{k+1}N_j^{2Ln}) \Bigr]^{1/2}.
\end{multline}
\item
$\displaystyle{\sup_{\mathbf i : X_{k}(\mathbf i) = 1} \Bigl|\sum_{i_{k+1}=1}^{N_{k+1}} \bigl(X_{k+1}(\overline{\mathbf i}) - p_{k+1} \bigr) \Bigr| \leq \bigl[8 N_{k+1}^{1-\epsilon_{k+1}} \ln(4BP_k) \bigr]^{\frac{1}{2}}}$. \label{lim-lemma-parta}

\end{enumerate}
\end{theorem}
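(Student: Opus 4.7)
The plan is to construct the sequence $\{S_k\}$ inductively, showing at each step $k$ that conditional on having already fixed a realization of $\mathbf X_1,\dots,\mathbf X_k$ satisfying (a)--(d) up through scale $k$, the freshly randomized Bernoulli variables $\mathbf Y_{k+1}$ produce an $\mathbf X_{k+1}$ satisfying (a)--(d) at scale $k+1$ with strictly positive conditional probability. Each of the four properties will be addressed by a separate large deviation estimate, and all failure probabilities will be summable so that a single union bound (or inductive application) yields a deterministic realization obeying every condition at every scale.

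Properties (a), (b), and (d) are variance-style bounds on independent Bernoullis. For (b), conditional on $\mathbf X_k$ the quantity $P_{k+1}$ is a sum of $P_kN_{k+1}$ i.i.d.\ Bernoulli$(p_{k+1})$ variables with mean $Q_{k+1}$, and Bernstein's inequality gives $\Pr(|P_{k+1}-Q_{k+1}|>B\sqrt{Q_{k+1}})\le 2e^{-cB^2}$, which is tiny for $B=10$; combining with the inductive bound on $P_k$ absorbs the factor $2$ per scale and yields (a). For (d), fixing $\mathbf i$ with $X_k(\mathbf i)=1$, the inner sum over $i_{k+1}$ is again Bernstein-controlled, and the logarithm $\ln(4BP_k)$ appearing on the right is exactly the cost of taking a union bound over the (at most) $P_k$ such $\mathbf i$.

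The crux is (c). Fix $\mathbf A_n\in\atr$. The strategy is to write $\Lambda(\mathbf A_n;\sigma_k)$ as a function of the Bernoulli vector $\mathbf Y_{k+1}$, expand using $\sigma_k=\phi_{k+1}-\phi_k$ so that each term is an $n$-fold correlation $\Lambda(\mathbf A_n;\phi_{k+\lambda_1},\dots,\phi_{k+\lambda_n})$ with $\pmb\lambda\in\{0,1\}^n$, and observe that since $\mathbb{E}[\phi_{k+1}\mid\mathbf X_k]=\phi_k$ when the Bernoulli trials are properly normalized, the conditional expectation of each multilinear term cancels, so $\mathbb{E}[\Lambda(\mathbf A_n;\sigma_k)\mid \mathbf X_k]=0$. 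Next, viewing $\Lambda$ as a martingale obtained by revealing the $Y_{k+1}(\overline{\mathbf i})$ one at a time, estimate the martingale increments: flipping a single $Y_{k+1}(\overline{\mathbf i})$ changes $\phi_{k+1}$ only on the interval $I_{k+1}(\overline{\mathbf i})$ of length $\delta_{k+1}$ with amplitude $(P_{k+1}\delta_{k+1})^{-1}$, and the number of $n$-tuples of intervals contributing to $\Lambda(\mathbf A_n;\sigma_k)$ that involve this particular $\overline{\mathbf i}$ is controlled using Lemma \ref{prelim-lemma} and the transverse hypothesis $\#(\mathbb F_{\text{int}}[n,k;\mathbf A_n])<P_k^{1-\epsilon_0}$. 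Azuma's inequality then yields a sub-Gaussian tail of the form $\exp(-t^2/\Sigma_k^2)$ where $\Sigma_k^2$ depends on the sum of squared increments.

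Finally, take a union bound over all $\mathbf A_n\in\atr$, whose cardinality is at most $4\delta_{k+1}^{-2Ln}$, and choose the Azuma threshold $t$ so that the exponent beats the log of the union bound. This produces precisely the square-root-logarithm factor $[\ln(4^n n!\,B\prod_{j=1}^{k+1}N_j^{2Ln})]^{1/2}$ in (\ref{critter}), while the prefactors $2^{k(n+3/2)}\prod_{j=1}^k N_j^{-1/2+\epsilon_j(n-1/2)}N_{k+1}^{n\epsilon_{k+1}}$ encode the trivial amplitude $(P_{k+1}\delta_{k+1})^{-1}\sim \prod N_j^{\epsilon_j}$ appearing $n$ times, against the $P_k^{-1/2+\epsilon_0/2}\sim\prod N_j^{-(1-\epsilon_j)/2}$ gain from the transverse restriction (with $\epsilon_0=\tfrac12$). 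The main obstacle is the bookkeeping in the martingale-increment calculation of step (c): one must carefully count, for each fixed $\overline{\mathbf i}$, how many transverse $n$-tuples can hit $I_{k+1}(\overline{\mathbf i})$ after accounting for both the scales $j<k+1$ (where Corollary \ref{prelim-cor1} limits the projections to be injective on bounded-size fibers) and the constraint coming from $\mathbf A_n\in\atr$, so that the sum of squared increments grows only as $P_k^{1+\epsilon_0}$ times the appropriate density factor rather than $P_k^n$.
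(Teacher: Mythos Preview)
Your overall architecture---inductive selection with large-deviation bounds and a union bound at each step---matches the paper, and your treatment of (a), (b), (d) is essentially the same as the paper's (Bernstein for sums of independent Bernoullis, with the $\ln(4BP_k)$ in (d) paying for the union bound over $\mathbf i$). The differences and gaps are all in part (c).

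First, the claim $\mathbb{E}[\Lambda(\mathbf A_n;\sigma_k)\mid\mathbf X_k]=0$ is not correct as stated: since $\phi_{k+1}=(P_{k+1}\delta_{k+1})^{-1}\mathbf 1_{S_{k+1}}$ carries the \emph{random} normalizer $P_{k+1}$, one does not have $\mathbb{E}[\phi_{k+1}\mid\mathbf X_k]=\phi_k$. The paper fixes this by first replacing $\sigma_k$ with $\bar\sigma_k=(Q_{k+1}\delta_{k+1})^{-1}\mathbf 1_{S_{k+1}}-\phi_k$, whose normalization is deterministic given $\mathbf X_k$; the discrepancy $\sigma_k-\bar\sigma_k$ is then bounded via part (b). Your outline omits this step, and the same issue invalidates your bounded-differences remark (``flipping a single $Y_{k+1}(\overline{\mathbf i})$ changes $\phi_{k+1}$ only on $I_{k+1}(\overline{\mathbf i})$''): it also changes $P_{k+1}$, hence $\phi_{k+1}$ globally.

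Second, the paper's martingale is \emph{not} the Doob martingale obtained by revealing individual $Y_{k+1}(\overline{\mathbf i})$. After the $\bar\sigma_k$ substitution, $\Lambda(\mathbf A_n;\bar\sigma_k)$ splits into a sum over $\mathbf I\in\mathbb F_{\text{int}}$---bounded trivially since $\#\mathbb F_{\text{int}}<P_k^{1/2}$ by the hypothesis $\mathbf A_n\in\atr$---plus a sum over $\mathbf I\in\mathbb F_{\text{tr}}$. This transverse sum is decomposed into $O(4^n n!)$ classes $\mathcal F_\rho$ on which the projections $\pi_\ell$ are injective and the $\alpha_k(\mathbf i_\ell)$ are strictly ordered; the martingale steps are indexed by the tuples $\mathbf I(j)\in\mathcal F_\rho$, ordered by the \emph{largest} coordinate $\mathbf i_{\rho(n)}(j)$. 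The martingale property holds because at step $j$ the factor $(Y_{k+1}(\mathbf i_{\rho(n)}(j),\cdot)-p_{k+1})$ is independent of everything revealed so far. Each increment $W_j$ is a sum over $\pmb\iota$ with at most $4N_{k+1}$ nonzero terms of size $\le\delta_{k+1}$, so $|W_j|\le 4\delta_k$ and $\sum_j c_j^2\le 16P_k\delta_k^2$---linear in $P_k$, not the $P_k^{1+\epsilon_0}$ you suggest.

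Third, you have misplaced the role of the transverse hypothesis. The condition $\mathbf A_n\in\atr$ is used only to make the internal-tangency sum small; it does not enter the martingale increment bound. What the class $\mathbb F_{\text{tr}}$ itself provides is the strict separation $|\alpha_k(\mathbf i_\ell)-\alpha_k(\mathbf i_{\ell'})|>4\delta_k$, and it is \emph{this} ordering---not the cardinality bound $\#\mathbb F_{\text{int}}<P_k^{1-\epsilon_0}$---that makes the martingale construction work.
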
 

\begin{corollary}\label{critter-corollary}
Let $\{S_k\}$ be the sequence of sets given by Theorem \ref{random-mainprop}.  Then:
\begin{enumerate}[(a)]
\item The associated operators $\Phi_k^{\ast}$ defined in (\ref{adjoint}) satisfy the
restricted strong-type estimate  
\begin{equation}\label{critter-cor}
\begin{split}
\sup_{\Omega \subseteq [0,1]} \frac{\|\Phi_k^{\ast}\mathbf 1_{\Omega}\|_{n}}{|\Omega|^{\frac{n-1}{n}}}
\leq 
C (n!\, B)^{1/n} 2^{k(1+\frac{3}{2n})}&\Bigl[
\prod_{j=1}^k N_j^{-\half+\epsilon_j(n-\half)}\Bigr]^{1/n} N_{k+1}^{\epsilon_{k+1}}\\
& \times
\Bigl[ \ln(4^n n!\, B \prod_{j=1}^{k+1}N_j^{2Ln}) \Bigr]^{1/2n}, 
\end{split}
\end{equation}  
where $C > 0$ is an absolute constant independent of $n$ and $k$.
\item  
Assume that the parameters $N_k,\epsilon_k$ have been set so that
\begin{equation}\label{boundedness}
\sup_{k \geq 1}\frac{2^{(5+\gamma)k}\ln(M_k)}{N_{k+1}^{1 - \epsilon_{k+1}}} \leq \frac{1}{32}. \end{equation}
for some $\gamma>0$. 
Then we further have
\begin{equation} \sup_{k': k' \geq k} \sum_{\begin{subarray}{c}\mathbf i : X_k(\mathbf i) = 1 \end{subarray}} \Bigl| \int_{I_k(\mathbf i)} \bigl(\phi_{k'} - \phi_k \bigr)dx \Bigr| 
\leq \frac{2B}{1-2^{-\gamma/2}} 2^{-k\gamma/2}.
\label{lim-cond-ineq} \end{equation} 
Consequently, the densities $\phi_k$ converge weakly to a 
probability measure $\mu$ supported on $S=\bigcap_{k=1}^\infty S_k$.
\end{enumerate}
\end{corollary}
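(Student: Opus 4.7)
\medskip

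\noindent\textit{Proof plan.} Part (a) is essentially a direct substitution. Apply Proposition \ref{prop-transverse} with $n$ as given and $\epsilon_0 = \tfrac{1}{2}$, which provides the bound (\ref{critter-weak}). It remains to check that the first term inside the max, namely $2^n n^4 P_k^{-1/2}(P_{k+1}\delta_{k+1})^{-(n-1)}$, is dominated by $C_0(k,n,\tfrac{1}{2})$ as given in (\ref{critter}). Inserting the lower bounds $P_k \geq 2^{-k}\prod_{j\leq k} N_j^{1-\epsilon_j}$ and $P_{k+1}\delta_{k+1} \geq 2^{-(k+1)}\prod_{j\leq k+1} N_j^{-\epsilon_j}$ (both from part (a) of Theorem \ref{random-mainprop}) yields an expression with the same product $\prod_{j=1}^k N_j^{-1/2+\epsilon_j(n-1/2)}$ that appears in (\ref{critter}), while the remaining factors of $2^k$, $N_{k+1}^{\epsilon_{k+1}}$, and combinatorial constants are smaller than the corresponding factors in $C_0$. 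Taking the $n$-th root then gives (\ref{critter-cor}).

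For part (b), the strategy is a telescoping-plus-refinement reduction. Writing $\phi_{k'}-\phi_k = \sum_{j=k}^{k'-1}(\phi_{j+1}-\phi_j)$ and applying the triangle inequality, it suffices to bound
\[
T_j(k) := \sum_{\mathbf{i}:X_k(\mathbf{i})=1} \Bigl|\int_{I_k(\mathbf{i})} (\phi_{j+1}-\phi_j)\,dx\Bigr|
\]
for each $j\geq k$, and then show the bound is summable in $j$. Decomposing each $I_k(\mathbf{i})$ into the level-$j$ basic intervals $I_j(\mathbf{l})$ it contains and pulling the absolute value inside gives $T_j(k) \leq T_j(j)$, reducing the problem to the top-level estimate.

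To estimate $T_j(j)$, note that for $\mathbf{l}$ with $X_j(\mathbf{l})=1$ we have $\int_{I_j(\mathbf{l})}\phi_j\,dx = 1/P_j$ and $\int_{I_j(\mathbf{l})}\phi_{j+1}\,dx = R_j(\mathbf{l})/P_{j+1}$, where $R_j(\mathbf{l}) := \sum_{i_{j+1}} X_{j+1}(\mathbf{l},i_{j+1})$. Thus the integrand equals $(R_j(\mathbf{l}) - P_{j+1}/P_j)/P_{j+1}$. Part (d) of Theorem \ref{random-mainprop} controls $|R_j(\mathbf{l}) - N_{j+1}^{1-\epsilon_{j+1}}|$ uniformly by $[8N_{j+1}^{1-\epsilon_{j+1}}\ln(4BP_j)]^{1/2}$, while averaging this bound over $\mathbf{l}$ (and using part (b) to keep $P_{j+1}/P_j$ close to $N_{j+1}^{1-\epsilon_{j+1}}$) gives the same control for the centering term. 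Hence
\[
T_j(j) \lesssim \frac{P_j}{P_{j+1}}\bigl[N_{j+1}^{1-\epsilon_{j+1}} \ln(4BP_j)\bigr]^{1/2}.
\]
Parts (a)–(b) give $P_j/P_{j+1} \lesssim N_{j+1}^{-(1-\epsilon_{j+1})}$, so the right-hand side collapses to $C[\ln(4BP_j)/N_{j+1}^{1-\epsilon_{j+1}}]^{1/2}$; using $\ln P_j \leq \ln M_j$ together with the hypothesis (\ref{boundedness}) forces this to decay at least like $2^{-j(1+\gamma)/2}$, which is stronger than the required $2^{-j\gamma/2}$.

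Summing the geometric series in $j\geq k$ yields (\ref{lim-cond-ineq}) with the stated constant $2B/(1-2^{-\gamma/2})$ (in fact with room to spare, which absorbs the crude constants accumulated above). The existence and support of the weak-$\ast$ limit $\mu$ then follows immediately by invoking Lemma \ref{limit-measure-lemma}. The main obstacle is the bookkeeping in the level-$j$ bound: one must simultaneously use \emph{all} of parts (a), (b), and (d) of Theorem \ref{random-mainprop}, as well as (\ref{boundedness}), to convert the per-level Bernstein-type fluctuation estimate into honest geometric decay in $j$; the telescoping/refinement reduction in part (b) is the only structural idea, everything else is quantitative.
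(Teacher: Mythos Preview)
Your argument is correct. Part (a) matches the paper's proof exactly: apply Proposition~\ref{prop-transverse} with $\epsilon_0=\tfrac12$, then use Theorem~\ref{random-mainprop}(a) to check that $2^n n^4 P_k^{-1/2}(P_{k+1}\delta_{k+1})^{-(n-1)}$ is dominated by $C_0(k,n,\tfrac12)$.

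For part (b), your route is genuinely different from the paper's, and in fact simpler. The paper telescopes in the same way but then fixes a level-$k$ interval $I_k(\mathbf i)$ and bounds $P_k\,|\int_{I_k(\mathbf i)}\sigma_{k+m}|$ for each $m\ge0$. To do this it must control the number of level-$(k+m)$ descendants of $I_k(\mathbf i)$, which requires the separate inductive Lemma~\ref{prop-lim-cond}. Your observation that $T_j(k)\le T_j(j)$---obtained by refining each $I_k(\mathbf i)$ into its level-$j$ subintervals and pulling the absolute value inside---eliminates this step entirely: once the outer sum lives at level $j$, only the one-step fluctuation between levels $j$ and $j+1$ is needed, and that is precisely what Theorem~\ref{random-mainprop}(d) provides. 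The averaging trick for the centering term is clean (note that averaging (d) over $\mathbf l$ already gives $|P_{j+1}/P_j-N_{j+1}^{1-\epsilon_{j+1}}|\le\Delta$, so part (b) is not strictly required there). Your approach buys a shorter argument with no auxiliary induction; the paper's approach gives slightly more, namely per-interval control of $\sum_{\mathbf j}X_{k+m}(\mathbf i,\mathbf j)$, but that extra information is not used elsewhere.
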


\begin{proof}
Part (a) follows from Proposition \ref{prop-transverse}.
By Theorem \ref{random-mainprop}(a), we have
$$
\frac{2^n n^4 P_k^{-1/2}}{(P_{k+1}\delta_{k+1})^{n-1}}
\leq 2^{2n-1} n^4 2^{k(n-\frac{1}{2})}\Bigl[
\prod_{j=1}^k N_j^{-\half+\epsilon_j(n-\half)}\Bigr] N_{k+1}^{(n-1)\epsilon_{k+1}}.
$$
Plugging this together with (\ref{critter}) into (\ref{critter-weak}), we get 
(\ref{critter-cor}).
The inequality (\ref{lim-cond-ineq}) follows from Theorem \ref{random-mainprop}(d);
we defer the proof of this to Subsection \ref{weasel-sub3}. Since (\ref{lim-cond-ineq})
implies in particular that (\ref{limit-measure}) holds, the convergence statement
 follows from Lemma \ref{limit-measure-lemma}.
\end{proof}

The proof of Theorem \ref{random-mainprop} is arranged as follows. 
Note that parts (a)--(b) concern the set $S_k$, whereas (c)--(d) are properties of 
$S_{k+1}$; accordingly, we will say
that $S_k$ obeys (a)--(b) if (a)--(b) hold as stated above, and that $S_k$ obeys (c)--(d) if
(c)--(d) hold with $k$ replaced by $k-1$.
Fix $B$ as in the statement of the theorem,
and choose $N_1$ sufficiently large relative to $B$. 
To initialize, we prove that $S_1$ obeys (a)--(b) with probability
at least $1-B^{-1}$, in particular there exists a choice of $S_1$ with these properties.  
Assume now that we have already chosen $S_1,\dots,S_k$ obeying (a)--(d)
(where (c)--(d) hold vacuously for $S_1$), 
and consider the space of all possible choices of $S_{k+1}$.  
We will prove in Subsections \ref{weasel-sub1}--\ref{weasel-sub3} that
each of (a)-(b) and (d) fails to hold for $S_{k+1}$ with probability at most $B^{-1}$,
and the event that (a)-(b) hold but (c) fails has probability at most $B^{-1}$.
Thus there is a probability of at least $1-4B^{-1}$ that $S_{k+1}$ obeys all of (a)--(d).
Fix this choice of $S_{k+1}$, and continue by induction.

We emphasize here that we do not attempt to randomize the entire sequence of steps simultaneously. By the $(k+1)$-th stage of the iteration we have restricted attention to a {\em{deterministic}} sequence $\mathbf X_k$, with the probabilistic machinery being applied to the random sequence $\mathbf X_{k+1}$ conditional on the previously obtained $\mathbf X_k$. As a consequence, we ensure the existence of {\em{some}} sequence of desirable sets, but (in contrast 
to e.g. Salem's construction in \cite{salem}) we can make no claim as to its frequency of occurrence among all possible iterative constructions subject to the given parameters.


\subsection{Two large deviation inequalities}
\label{sub-prob}

In this subsection, we record two large deviation inequalities widely used in 
probability theory that will play a key role in the sequel. 
The first one is a version of Bernstein's inequality. We will use it here much as it was used in \cite{lp}
and  \cite{green-sumsets}.

\begin{lemma}[Bernstein's inequality]\label{ex-lemma2}
Let $Z_1,\dots, Z_m$ be independent random variables with $|Z_j|\leq 1$,
$\ee Z_j=0$ and $\ee |Z_j|^2=\sigma_j^2$.  Let $\sum \sigma_j^2\leq\sigma^2$,
and assume that $\sigma^2\geq 6m\lambda$.  Then 
\begin{equation}\label{ex-6}
\pp\Big(\Big|\sum_1^m Z_j\Big|\geq m\lambda \Big)\leq 4e^{-m^2\lambda^2/8\sigma^2}.
\end{equation}
\end{lemma}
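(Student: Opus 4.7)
The plan is to use the standard Chernoff--Cram\'er exponential moment method. For any $t > 0$, Markov's inequality applied to $e^{t\sum_j Z_j}$ gives
\begin{equation*}
\pp\Bigl(\sum_{j=1}^m Z_j \geq m\lambda\Bigr) \leq e^{-tm\lambda}\, \ee\Bigl[\exp\Bigl(t\sum_{j=1}^m Z_j\Bigr)\Bigr] = e^{-tm\lambda}\prod_{j=1}^m \ee[e^{tZ_j}],
\end{equation*}
where the last equality uses independence of the $Z_j$. The same argument applied to $-Z_j$ yields the matching lower tail bound, which will cost only a factor of $2$ at the end.

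The core step is to control the individual moment generating functions using the hypotheses $|Z_j|\leq 1$, $\ee Z_j=0$, $\ee Z_j^2=\sigma_j^2$. For $k\geq 2$, the pointwise bound $|Z_j|^k \leq |Z_j|^2$ gives $|\ee Z_j^k|\leq \sigma_j^2$, so expanding the exponential termwise,
\begin{equation*}
\ee[e^{tZ_j}] = 1 + \sum_{k\geq 2}\frac{t^k \ee Z_j^k}{k!} \leq 1 + \sigma_j^2 \sum_{k\geq 2}\frac{|t|^k}{k!} = 1 + \sigma_j^2(e^{|t|}-1-|t|) \leq \exp\bigl(\sigma_j^2(e^{|t|}-1-|t|)\bigr).
\end{equation*}
Multiplying over $j$ and invoking $\sum \sigma_j^2 \leq \sigma^2$ turns the product bound into $\exp(\sigma^2(e^t-1-t))$.

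The optimization step is now routine. For $0<t\leq 1$ the elementary inequality $e^t-1-t \leq t^2$ holds (as one checks by a Taylor expansion), giving
\begin{equation*}
\pp\Bigl(\sum_{j=1}^m Z_j \geq m\lambda\Bigr) \leq \exp(-tm\lambda + \sigma^2 t^2).
\end{equation*}
I choose $t = m\lambda/(2\sigma^2)$; the hypothesis $\sigma^2 \geq 6m\lambda$ ensures $t\leq 1/12 \leq 1$, so the quadratic bound above is valid. Substituting yields an exponent of $-m^2\lambda^2/(4\sigma^2)$, comfortably inside $-m^2\lambda^2/(8\sigma^2)$ with plenty of slack to absorb the looser bound the paper apparently uses. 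Combining with the symmetric estimate for the lower tail gives a factor of $2$; the further factor of $2$ (producing the stated constant $4$) is harmless slack, which I would accept rather than sharpen.

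The only mildly delicate point is coordinating the range of $t$ with the hypothesis $\sigma^2 \geq 6m\lambda$: one must be careful to pick the exponential parameter small enough that the simple quadratic surrogate for $e^t-1-t$ is legitimate, while still large enough to produce the Gaussian-type decay. The assumption $\sigma^2 \geq 6m\lambda$ is exactly what guarantees this compatibility, and is the main obstacle to state precisely; everything else is a direct computation.
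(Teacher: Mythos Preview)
Your argument is correct: the Chernoff--Cram\'er bound, the moment estimate $|\ee Z_j^k|\leq\sigma_j^2$ from $|Z_j|\leq 1$, the quadratic surrogate $e^t-1-t\leq t^2$ on $[0,1]$, and the choice $t=m\lambda/(2\sigma^2)$ (which the hypothesis $\sigma^2\geq 6m\lambda$ forces into that range) all combine exactly as you describe to give the one-sided bound $e^{-m^2\lambda^2/(4\sigma^2)}$, hence the two-sided bound with constant $2$, comfortably within the stated $4e^{-m^2\lambda^2/(8\sigma^2)}$.

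As for comparison with the paper: there is nothing to compare. The paper does not prove this lemma; it simply records Bernstein's inequality as a well-known large deviation estimate and uses it as a black box. Your write-up supplies the standard textbook proof the authors chose to omit.
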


We will also need a similar inequality for random variables which are not independent,
but instead are allowed to interact with one another to a limited extent.  The exact
statement that we need is contained in Lemma \ref{azuma} below.  Recall that a sequence
$U_1,U_2,\dots$ of random variables is a \textit{martingale} if ${\mathbb E}|U_j|<\infty$
for all $j$ and 
$$
{\mathbb E}(U_{m+1}|U_1,\dots,U_{m})=U_{m},\ m=1,2,\dots.
$$

\begin{lemma}[Azuma's inequality, \cite{wellner-notes} or \cite{a&s-book}, p.\ 95] \label{azuma}
Suppose that $\{U_k : k=0,1,2, \cdots \}$ is a martingale and $\{c_k : k \geq 0\}$ 
is a sequence of positive numbers such that $|U_{k+1} - U_k| \leq c_k$ a.s. Then for 
all integers $m \geq 1$ and all $\lambda \in \mathbb R$, 
\[ \mathbb P(|U_m - U_0| \geq \lambda) \leq 2 \exp \left({-\frac{\lambda^2}{2 \sum_{k=1}^{m} c_k^2}}\right) . \] 
\end{lemma}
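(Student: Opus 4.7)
The plan is to prove Azuma's inequality by the standard Chernoff/exponential moment method, conditioning along the martingale filtration. The skeleton is Markov's inequality applied to $e^{t(U_m-U_0)}$ for a parameter $t>0$ to be optimized, combined with an iterated conditional expectation argument that peels off one martingale increment at a time.

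First I would record the key lemma of Hoeffding: if $X$ is a random variable with $\mathbb{E}[X\mid\mathcal{G}]=0$ almost surely and $|X|\leq c$, then $\mathbb{E}[e^{tX}\mid\mathcal{G}]\leq e^{t^2c^2/2}$ for all real $t$. The proof of this uses convexity: for $|x|\leq c$ write $e^{tx}\leq \tfrac{c+x}{2c}e^{tc}+\tfrac{c-x}{2c}e^{-tc}$, take conditional expectations (the linear term vanishes since $\mathbb{E}[X\mid\mathcal{G}]=0$), obtain $\cosh(tc)$, and then bound $\cosh(tc)\leq e^{t^2c^2/2}$ by comparing Taylor coefficients. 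I would apply this with $X$ equal to the martingale difference $D_{k+1}=U_{k+1}-U_k$, which satisfies $|D_{k+1}|\leq c_k$ and $\mathbb{E}[D_{k+1}\mid U_0,\ldots,U_k]=0$ by the martingale property.

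Next, for fixed $t>0$, Markov's inequality gives
\[
\mathbb{P}(U_m-U_0\geq \lambda)\leq e^{-t\lambda}\,\mathbb{E}\bigl[e^{t(U_m-U_0)}\bigr].
\]
I would expand $U_m-U_0=\sum_{k=0}^{m-1}D_{k+1}$ and iteratively bound the moment generating function by conditioning on the past: writing
\[
\mathbb{E}\bigl[e^{t(U_m-U_0)}\bigr]=\mathbb{E}\Bigl[e^{t(U_{m-1}-U_0)}\,\mathbb{E}\bigl[e^{tD_m}\,\big|\,U_0,\ldots,U_{m-1}\bigr]\Bigr]\leq e^{t^2c_{m-1}^2/2}\,\mathbb{E}\bigl[e^{t(U_{m-1}-U_0)}\bigr],
\]
and inducting downward, I get
\[
\mathbb{E}\bigl[e^{t(U_m-U_0)}\bigr]\leq \exp\!\Bigl(\tfrac{t^2}{2}\sum_{k}c_k^2\Bigr).
\]
Optimizing over $t$ by choosing $t=\lambda/\sum c_k^2$ yields $\mathbb{P}(U_m-U_0\geq\lambda)\leq \exp(-\lambda^2/(2\sum c_k^2))$. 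Applying the identical argument to the martingale $-U_k$ (which has the same increment bounds) produces the symmetric tail $\mathbb{P}(U_0-U_m\geq\lambda)\leq \exp(-\lambda^2/(2\sum c_k^2))$, and a union bound supplies the factor of $2$ in the stated inequality.

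There is no real obstacle here; the only subtlety is the conditional Hoeffding step, and I would just verify it once at the outset. Since the authors cite \cite{wellner-notes} and \cite{a&s-book} for this classical inequality, I would expect no original proof in the paper itself.
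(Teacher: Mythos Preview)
Your proof is the standard and correct argument for Azuma's inequality, and your expectation is accurate: the paper does not supply its own proof of this lemma, merely citing \cite{wellner-notes} and \cite{a&s-book}. There is nothing to compare.
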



\subsection{Proof of Theorem \ref{random-mainprop} (a)-(b)}
\label{weasel-sub1}

For $k=1$, let $N_1$ be chosen 
so that $6B \leq N_1^{(1 - \epsilon_1)/2}$. By Bernstein's inequality (Lemma \ref{ex-lemma2})
with $Z_i = X_1(i) - p_1$, $m=N_1$, $\sigma^2 = N_1 p_1 = N_1^{1 - \epsilon_1}$ and 
$\lambda = B N_1^{-(1 + \epsilon_1)/2}$, we have
\begin{align*} \mathbb P(\bigl|P_1 - N_1 p_1 \bigr| > BN_1^{\frac{1 - \epsilon_1}{2}}) &= \mathbb P \Bigl( \Bigl| \sum_{i=1}^{N_1}\bigl[ X_1(i) - p_1 \bigr] \Bigr| > BN_1^{\frac{1 - \epsilon_1}{2}} \Bigr) \\ &\leq 4 e^{-\frac{B^2}{8}}. \end{align*} 
Since $Q_1 = N_1p_1 = N_1^{1 - \epsilon_1}$, this shows that the inequality in (\ref{partb}) holds for $k=1$ with probability $\geq 1 - 4e^{-{B^2}/{8}}$. Further, for any $\mathbf X_1$ that satisfies (\ref{partb}), the estimate  
\[ \frac{1}{2}N_1^{1 - \epsilon_1} \leq N_1^{1 - \epsilon_1} (1 - BN_1^{-\frac{1 - \epsilon_1}{2}})  \leq P_1 \leq N_1^{1 - \epsilon_1} (1 + BN_1^{-\frac{1 - \epsilon_1}{2}}) \leq 2N_1^{1 - \epsilon_1} \]   
holds. Assume now that $\mathbf X_k$ has been selected so that (\ref{parta}) and (\ref{partb}) hold for some $k \geq 1$.  The random variables 
\[ Z_{\mathbf i} = \frac{1}{N_{k+1}} \sum_{i_{k+1} = 1}^{N_{k+1}} \bigl[Y_{k+1}(\overline{\mathbf i}) - p_{k+1} \bigr] \ , \]
indexed by $\mathbf i \in \mathbb I_k$ with $X_k(\mathbf i) = 1$, 
are iid with mean zero and variance $p_{k+1}(1 - p_{k+1})/N_{k+1}$. Hence Lemma 
\ref{ex-lemma2} applies with $m=P_k$, $\sigma^2 = P_k p_{k+1}/N_{k+1}$, and 
$\lambda = B \sqrt{p_{k+1}/(P_k N_{k+1})}$, yielding 
\begin{align*}
\mathbb P \Bigl(\Bigl| P_{k+1} &- Q_{k+1 }\Bigr| > B \sqrt{Q_{k+1}}  \Bigr) \\ &= \mathbb P \Bigl( \Bigl| \sum_{\mathbf i \in \mathbb I_k} X_k(\mathbf i) \sum_{i_{k+1}=1}^{N_{k+1}} \bigl[Y_{k+1}(\overline{\mathbf i}) - p_{k+1} \bigr] \Bigr| > B \sqrt{Q_{k+1}} \Bigr)  \\ &= \mathbb P \Bigl( \Bigl| \sum_{X_k(\mathbf i) = 1} Z_{\mathbf i} \Bigr| >  P_k \lambda \Bigr) \\ &\leq 4e^{-\frac{B^2}{8}} < B^{-1}.
\end{align*}    
Thus with large probability, (\ref{partb}) holds with $k$ replaced by $k+1$. Further by induction hypothesis (a) and the definition of $Q$, 
\begin{equation} 2^{-k} \prod_{j=1}^{k+1} N_j^{1 - \epsilon_j} \leq Q_{k+1}
 \leq 2^k \prod_{j=1}^{k+1} N_j^{1 - \epsilon_j}, \label{Qk-ind} \end{equation} 
which in particular implies that $Q_{k+1} \geq 2^{-k} N_1^{(k+1)(1 - \epsilon_1)} \geq 4B^2$ if $N_1$ is chosen sufficiently large.  
Thus for any $\mathbf X_{k+1}$ satisfying (\ref{partb}),  
\[ \frac{1}{2} \leq 1 - \frac{B}{\sqrt{Q_{k+1}}} 
\leq \frac{P_{k+1}}{Q_{k+1}} \leq 1 + \frac{B}{\sqrt{Q_{k+1}}} \leq 2, \]
which coupled with (\ref{Qk-ind}) proves the inductive step for (\ref{parta}).


\subsection{Proof of Theorem \ref{random-mainprop}(\ref{partc})}
\label{weasel-sub2}

We now begin the proof of (c), which is substantially more difficult. The strategy of the proof is outlined in 
\S\ref{subsubsec-steps} below, the execution of the various steps being relegated to the later parts of this subsection.  
\subsubsection{Steps of the proof} \label{subsubsec-steps} 

Throughout this section we will assume that $S_k$ has been selected so as to obey Theorem \ref{random-mainprop}(a)-(b). 
We begin by replacing the measure $\sigma_k = \phi_{k+1} - \phi_k$ 
in (\ref{transverse-cond}) by $\overline{\sigma}_k$,
where 
\begin{equation} \overline{\sigma}_k(z) = \frac{1}{Q_{k+1} \delta_{k+1}} \mathbf 1_{S_{k+1}}(z) - \frac{1}{P_k \delta_k} \mathbf 1_{S_k}(z).  \label{phik-bar} \end{equation}
This renders the expression in (\ref{transverse-cond}) more amenable to the application 
of the large deviation inequalities from Subsection \ref{sub-prob}, at the expense of a 
harmless error term that we estimate below.

\begin{lemma}[{\bf{Step 1}}]
Assume that Theorem \ref{random-mainprop}(a)--(b) holds at step $k+1$. 
For any $\mathbf A_n = \{(c_{\ell}, r_{\ell}) : 1 \leq \ell \leq n \} \in \mathfrak A$, 
\begin{equation} 
|\Lambda({\mathbf A}_n;\sigma_k)|\leq |\Lambda({\mathbf A}_n;\overline{\sigma}_k)|
+ 2^{2n+1}B 2^{k(n + \frac{3}{2})} \Bigl[ \prod_{j=1}^{k+1} N_j^{-\frac{1}{2} + \epsilon_j(n-\frac{1}{2})}\Bigr]. \label{error-est} \end{equation} 
In particular, this means that for any $0 < \epsilon_0 < 1$, (\ref{transverse-cond}) holds with
\begin{equation} C_0(k,n,\epsilon_0)= \sup_{\mathbf A_n \in \mathfrak A_{\text{tr}}} \bigl| \Lambda(\mathbf A_n; \overline{\sigma}_k) \bigr| + 2^{2n+1}B2^{k + \frac{3}{2}}\Bigl[ \prod_{j=1}^{k+1} N_j^{-\frac{1}{2} + \epsilon_j(n-\frac{1}{2})}\Bigr]. \label{C0-est-step1} \end{equation} \label{lemma-error-est}
\end{lemma}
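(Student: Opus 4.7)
The plan is to exploit the multilinearity of $\Lambda$ to expand $\Lambda(\mathbf{A}_n;\sigma_k)$ around $\Lambda(\mathbf{A}_n;\overline{\sigma}_k)$ and then estimate each of the $2^n-1$ cross terms by a simple pointwise argument using the sharp bounds from Theorem~\ref{random-mainprop}(a)--(b) applied at both steps $k$ and $k+1$.

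First I would write
\[
g_k(z) := \sigma_k(z) - \overline{\sigma}_k(z) = \Bigl(\frac{1}{P_{k+1}\delta_{k+1}} - \frac{1}{Q_{k+1}\delta_{k+1}}\Bigr)\mathbf{1}_{S_{k+1}}(z),
\]
so that $\sigma_k = \overline{\sigma}_k + g_k$. Expanding the $n$-fold product in (\ref{corr-e1}) with $h_0 := \overline{\sigma}_k$ and $h_1 := g_k$ yields
\[
\Lambda(\mathbf{A}_n;\sigma_k) = \Lambda(\mathbf{A}_n;\overline{\sigma}_k) + \sum_{\substack{\mathbf{T}\in\{0,1\}^n\\ \mathbf{T}\neq\mathbf 0}} \Lambda(\mathbf{A}_n;h_{T_1},\dots,h_{T_n}),
\]
so the lemma reduces to estimating the sum on the right.

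Next I would establish two pointwise bounds. From Theorem~\ref{random-mainprop}(b) and the fact that (a)-(b) imply $P_{k+1}\geq Q_{k+1}/2$ (since $N_1$ is large, $Q_{k+1}\geq 4B^{2}$),
\[
\|g_k\|_\infty \leq \frac{|P_{k+1}-Q_{k+1}|}{P_{k+1}Q_{k+1}\delta_{k+1}} \leq \frac{2B}{Q_{k+1}^{3/2}\delta_{k+1}},
\qquad
\|\overline{\sigma}_k\|_\infty \leq \frac{2}{Q_{k+1}\delta_{k+1}},
\]
where for $\|\overline{\sigma}_k\|_\infty$ I use $S_{k+1}\subseteq S_k$ and the identity $Q_{k+1}\delta_{k+1}=P_k\delta_k\, N_{k+1}^{-\epsilon_{k+1}}\leq P_k\delta_k$. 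For a term with $m:=\#\{\ell:T_\ell=1\}\geq 1$, I would pick any $\ell_0$ with $T_{\ell_0}=1$, bound all other factors pointwise by their $L^\infty$ norms, and integrate the remaining factor of $g_k\bigl((z-c_{\ell_0})/r_{\ell_0}\bigr)$ using $r_{\ell_0}\leq 2$ and $|S_{k+1}|=P_{k+1}\delta_{k+1}\leq 2Q_{k+1}\delta_{k+1}$. This yields
\[
|\Lambda(\mathbf{A}_n;h_{T_1},\dots,h_{T_n})|\ \leq\ \frac{2^{n+2}\,B^{m}}{Q_{k+1}^{n-1+m/2}\,\delta_{k+1}^{n-1}}.
\]

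Finally I would sum over $\mathbf{T}\neq\mathbf 0$. Since each additional factor of $g_k$ costs a factor $B/\sqrt{Q_{k+1}}\leq 1/2$ (by the same bound on $Q_{k+1}$), the geometric series is dominated by its $m=1$ term, giving
\[
\sum_{\mathbf{T}\neq\mathbf 0}|\Lambda(\mathbf{A}_n;h_{T_1},\dots,h_{T_n})| \leq \frac{2^{2n+2}\,B}{Q_{k+1}^{n-1/2}\,\delta_{k+1}^{n-1}}.
\]
It remains to convert the $Q_{k+1}$- and $\delta_{k+1}$-factors into the stated form. Using the lower bound $Q_{k+1}\geq 2^{-k}\prod_{j=1}^{k+1}N_j^{1-\epsilon_j}$ from Theorem~\ref{random-mainprop}(a) and $\delta_{k+1}=\prod_{j=1}^{k+1}N_j^{-1}$, a direct computation gives $Q_{k+1}^{n-1/2}\delta_{k+1}^{n-1}\geq 2^{-k(n-1/2)}\prod_{j=1}^{k+1}N_j^{1/2-\epsilon_j(n-1/2)}$, which yields an error bound strictly smaller than the claimed one in (\ref{error-est}). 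The resulting formula (\ref{C0-est-step1}) for $C_0(k,n,\epsilon_0)$ then follows from the definition of $\mathfrak{A}_{\text{tr}}$.

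The only real subtlety is the bookkeeping in the last paragraph (making sure the sign of the error terms and the $Q_{k+1}$-versus-$P_{k+1}$ interplay work out), but there is no substantial obstacle; all steps are pointwise estimates combined with multilinearity.
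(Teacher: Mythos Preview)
Your proposal is correct and follows essentially the same approach as the paper: write $\sigma_k=\overline{\sigma}_k+e_k$ with $e_k$ supported on $S_{k+1}$, expand multilinearly, bound all but one factor in each cross term pointwise, and integrate the remaining $e_k$-factor using the measure of $S_{k+1}$. The only cosmetic differences are that you keep the bounds in terms of $Q_{k+1}$ before substituting (whereas the paper substitutes immediately), and that you track the number $m$ of $g_k$-factors and sum, whereas the paper simply notes that the $e_k$-bound is smaller than the $\overline{\sigma}_k$-bound and uses the worst case $m=1$ for every term; as you note, your organization even yields a slightly sharper power of $2^{k}$ than (\ref{error-est}).
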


\begin{proposition}[{\bf{Step 2}}]\label{trans-criterion}
Suppose that there is a constant $C_1(k,n,\epsilon_0)$ such that for all $\mathbf A_n \in \atr$ the following estimate holds:
\begin{equation}\label{trans-crit}
\begin{split}
\Biggl|\sum_{\mathbf I \in \mathbb F_{\text{tr}}}  \prod_{\ell=1}^{n} X_k(\mathbf i_{\ell}) \sum_{\pmb{\iota}} \prod_{\ell=1}^{n} \Bigl( Y_{k+1}(\overline{\mathbf i}_{\ell}) - p_{k+1}\Bigr) \cdot \bigl| \bigcap_{\ell=1}^{n} \bigl(c_{\ell} + r_{\ell} &I_{k+1}(\overline{\mathbf i}_{\ell}) \bigr) \bigr| \Biggr| \\
& \leq C_1(k,n,\epsilon_0)
\end{split}
\end{equation} 
where $\mathbf I = (\mathbf i_1, \cdots, \mathbf i_n)$, $\overline{\mathbf i}_\ell=
({\mathbf i}_\ell, i_{k+1,\ell})$, and $\pmb{\iota}=(i_{k+1,1},\dots,i_{k+1,n})$ denotes the 
$n$-vector whose entries are the $(k+1)$-th entries of $\overline{\mathbf i}_1, 
\cdots, \overline{\mathbf i}_n$ respectively (thus $\pmb \iota$ ranges over the set 
$\{1,2 \cdots, N_{k+1} \}^n$). 
Then
\begin{equation} 
\begin{aligned}
\sup_{\mathbf A_n \in \atr}|\Lambda({\mathbf A}_n;\overline{\sigma}_k)|
\leq &C_1(k,n,\epsilon_0) 2^{kn}\Bigl[\prod_{j=1}^{k+1}N_j^{n\epsilon_j}\Bigr]\\
&+ 2^{k(n+1-\epsilon_0)+3} \Bigl[\prod_{j=1}^{k} N_j^{-\epsilon_0 +
\epsilon_j(n +\epsilon_0-1)) } \Bigr]N_{k+1}^{n\epsilon_{k+1}}.
\end{aligned}
\label{transverse-cond-C1}  \end{equation} 
\end{proposition}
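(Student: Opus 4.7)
Plan. The first step is to rewrite $\overline{\sigma}_k$ in a single ``centered'' form in which the independent mean-zero random variables $Y_{k+1}(\overline{\mathbf i})-p_{k+1}$ appear explicitly. Using the tiling identity $\mathbf 1_{I_k(\mathbf i)}=\sum_{i_{k+1}=1}^{N_{k+1}}\mathbf 1_{I_{k+1}(\overline{\mathbf i})}$ (valid a.e.), the relation $X_{k+1}(\overline{\mathbf i})=X_k(\mathbf i)Y_{k+1}(\overline{\mathbf i})$, and the easily verified identity $Q_{k+1}\delta_{k+1}=P_kp_{k+1}\delta_k$, a short computation yields
\[
\overline{\sigma}_k(z)=\frac{1}{P_kp_{k+1}\delta_k}\sum_{\overline{\mathbf i}\in\mathbb I_{k+1}}X_k(\mathbf i)\bigl[Y_{k+1}(\overline{\mathbf i})-p_{k+1}\bigr]\mathbf 1_{I_{k+1}(\overline{\mathbf i})}(z).
\]
Substituting this into the definition of $\Lambda(\mathbf A_n;\overline{\sigma}_k)$ and swapping the integral with the sums produces
\[
\Lambda(\mathbf A_n;\overline{\sigma}_k)=\frac{1}{(P_kp_{k+1}\delta_k)^n}\sum_{\mathbf I}\prod_\ell X_k(\mathbf i_\ell)\sum_{\pmb\iota}\prod_\ell\bigl[Y_{k+1}(\overline{\mathbf i}_\ell)-p_{k+1}\bigr]\Bigl|\bigcap_\ell\bigl(c_\ell+r_\ell I_{k+1}(\overline{\mathbf i}_\ell)\bigr)\Bigr|,
\]
where $\mathbf I=(\mathbf i_1,\dots,\mathbf i_n)$ ranges over $\mathbb I_k^n$; terms with $\mathbf I\notin\mathbb F[n,k;\mathbf A_n]$ vanish identically since the level-$(k+1)$ intersection is contained in the level-$k$ one.

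Next I would split $\mathbb F[n,k;\mathbf A_n]=\mathbb F_{\text{tr}}\cup\mathbb F_{\text{int}}$ and handle the two pieces separately. The contribution from $\mathbb F_{\text{tr}}$ is precisely the expression bounded in hypothesis (\ref{trans-crit}), hence this piece is at most $C_1(k,n,\epsilon_0)(P_kp_{k+1}\delta_k)^{-n}$. Invoking the lower bound $P_k\geq 2^{-k}\prod_{j=1}^kN_j^{1-\epsilon_j}$ from Theorem \ref{random-mainprop}(a), together with $p_{k+1}^{-n}=N_{k+1}^{n\epsilon_{k+1}}$ and $\delta_k^{-n}=\prod_{j=1}^kN_j^n$, reproduces exactly the first term on the right side of (\ref{transverse-cond-C1}).

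The internal part requires one extra observation which I expect to be the crux of the argument. Because $I_k(\mathbf i_\ell)$ is a disjoint union of the $I_{k+1}(\overline{\mathbf i}_\ell)$, the pointwise identity $\sum_{i_{k+1,\ell}}\mathbf 1_{c_\ell+r_\ell I_{k+1}(\overline{\mathbf i}_\ell)}=\mathbf 1_{c_\ell+r_\ell I_k(\mathbf i_\ell)}$ holds, and integrating the product over $z$ gives the telescoping identity
\[
\sum_{\pmb\iota}\Bigl|\bigcap_\ell\bigl(c_\ell+r_\ell I_{k+1}(\overline{\mathbf i}_\ell)\bigr)\Bigr|=\Bigl|\bigcap_\ell\bigl(c_\ell+r_\ell I_k(\mathbf i_\ell)\bigr)\Bigr|\leq 2\delta_k.
\]
Combined with the trivial pointwise bound $|Y_{k+1}(\overline{\mathbf i}_\ell)-p_{k+1}|\leq 1$ and the inequality $\#(\mathbb F_{\text{int}}[n,k;\mathbf A_n])<P_k^{1-\epsilon_0}$ that defines $\atr$, the internal contribution is at most $2\delta_kP_k^{1-\epsilon_0}(P_kp_{k+1}\delta_k)^{-n}$. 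Estimating $P_k^{-(n-1+\epsilon_0)}$ from above using Theorem \ref{random-mainprop}(a) then matches the second term of (\ref{transverse-cond-C1}). The essential point is that this telescoping eliminates what a naive fiber-by-fiber count (based on Corollary \ref{prelim-cor1}) would produce as a combinatorial factor $4^{n-1}$; without it the stated $k$-dependence of the error term would degrade for large $n$.
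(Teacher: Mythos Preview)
Your proof is correct and tracks the paper's argument almost step for step: the same centered rewriting of $\overline{\sigma}_k$ (note $Q_{k+1}\delta_{k+1}=P_kp_{k+1}\delta_k$, so your normalization agrees with the paper's), the same product expansion, the same split $\mathbb F=\mathbb F_{\text{tr}}\cup\mathbb F_{\text{int}}$, and the same use of the hypothesis for $\mathbb F_{\text{tr}}$ together with $\#(\mathbb F_{\text{int}})<P_k^{1-\epsilon_0}$ for $\mathbb F_{\text{int}}$.

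The one local difference is your treatment of the inner sum $\sum_{\pmb\iota}\bigl|\bigcap_\ell(c_\ell+r_\ell I_{k+1}(\overline{\mathbf i}_\ell))\bigr|$ for fixed $\mathbf I$. Your telescoping identity evaluates this \emph{exactly} as $\bigl|\bigcap_\ell(c_\ell+r_\ell I_k(\mathbf i_\ell))\bigr|\leq 2\delta_k$. The paper instead proves a counting lemma (Lemma~\ref{ran-lemma3}): for fixed $\mathbf I$ there are $O(N_{k+1})$ admissible $\pmb\iota$, each contributing at most $\delta_{k+1}$, for a bound of order $\delta_k$. Both routes give the same order and the stated conclusion. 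Your telescoping is arguably the cleaner of the two, since it sidesteps any fiber-counting constant altogether; but your closing remark slightly overstates its necessity --- the paper's counting argument does not actually invoke Corollary~\ref{prelim-cor1}, and in any event an $n$-dependent constant at this step would be absorbed harmlessly into $C_0(k,n,\epsilon_0)$.
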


\begin{proposition}[{\bf{Step 3}}] \label{prop-C0-final}
The event that (\ref{trans-crit}) holds with  
\begin{equation}
C_1(k,n,\epsilon_0) = 4^n n! \Bigl[\prod_{j=1}^{k} N_j^{-\frac{1 + \epsilon_j}{2}} \Bigr] \times \Bigl[\ln \bigl(4^n n! B \prod_{j=1}^{k+1} N_j^{2Ln} \bigr) \Bigr]^{\frac{1}{2}} \label{ran-e21}
\end{equation} 
has probability at least $1-B^{-1}$.
\end{proposition}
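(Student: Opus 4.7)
The plan is to apply Azuma's inequality (Lemma \ref{azuma}) to each fixed $\mathbf A_n \in \atr$ and then take a union bound over the finite index set $\mathfrak A$. Write $F(\mathbf A_n)$ for the random variable on the left-hand side of (\ref{trans-crit}), viewed as a polynomial in the independent Bernoulli variables $\{Y_{k+1}(\mathbf j) : \mathbf j \in \mathbb I_{k+1}\}$; the sequence $\mathbf X_k$ is treated as deterministic data inherited from the previous step. The mean of $F(\mathbf A_n)$ vanishes: the transverse condition $\mathbf I \in \mathbb F_{\text{tr}}$ forces the multi-indices $\mathbf i_1, \ldots, \mathbf i_n$ to be pairwise distinct in $\mathbb I_k$ (if two were equal they would form an internal tangency with $|i_{\ell k} - i_{\ell' k}| = 0$), hence the extended indices $\overline{\mathbf i}_1, \ldots, \overline{\mathbf i}_n$ are also distinct in $\mathbb I_{k+1}$, and each summand of (\ref{trans-crit}) is a product of $n$ independent centered Bernoulli factors with mean zero.

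Order the Bernoulli variables arbitrarily as $W_1, \ldots, W_{M_{k+1}}$ and form the Doob martingale $U_s = \mathbb E[F(\mathbf A_n) \mid W_1, \ldots, W_s]$, so that $U_0 = 0$ and $U_{M_{k+1}} = F(\mathbf A_n)$. The key step is a martingale-difference bound $|U_s - U_{s-1}| \le c_s$ with $c_s = 0$ whenever $W_s = Y_{k+1}(\mathbf j_s)$ corresponds to an index $\mathbf j_s = (\mathbf j_s', j_{s,k+1})$ with $X_k(\mathbf j_s') = 0$, and $c_s \le 2n \cdot 4^{n-1} \delta_{k+1}$ otherwise. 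The first case is immediate because every summand of $F(\mathbf A_n)$ involving $W_s$ carries the vanishing factor $X_k(\mathbf j_s') = 0$. For the second, the summands containing $W_s$ are indexed by the choice of slot $\ell$ with $\overline{\mathbf i}_\ell = \mathbf j_s$ together with an $(n-1)$-tuple $(\overline{\mathbf i}_{\ell'})_{\ell' \ne \ell}$ making the level-$(k+1)$ intersection nonempty; iterating the 4-to-1 projection estimate of Lemma \ref{prelim-lemma} bounds the number of such summands by $n \cdot 4^{n-1}$, and each changes by at most $2 \delta_{k+1}$ when $W_s$ flips (the remaining $n-1$ centered Bernoulli factors are bounded by $1$ in absolute value and the $n$-fold intersection has length at most $2 \delta_{k+1}$).

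Since only the at most $P_k N_{k+1}$ indices $s$ with $X_k(\mathbf j_s') = 1$ contribute, the identity $N_{k+1} \delta_{k+1}^2 = \delta_k \delta_{k+1}$ combined with the inductive estimate $P_k \delta_k \le 2^k \prod_{j=1}^k N_j^{-\epsilon_j}$ from Theorem \ref{random-mainprop}(\ref{parta}) yields
\[
\sum_s c_s^2 \le 4 n^2 \cdot 16^{n-1} \cdot 2^k \Bigl[\prod_{j=1}^k N_j^{-\epsilon_j}\Bigr] \delta_{k+1}.
\]
Azuma's inequality then gives $\mathbb P(|F(\mathbf A_n)| > C_1) \le 2 \exp(-C_1^2/(2 \sum_s c_s^2))$; a union bound over the at most $4 \prod_{j=1}^{k+1} N_j^{2Ln}$ elements of $\mathfrak A$ produces the logarithmic factor in (\ref{ran-e21}), and a routine verification shows the choice of $C_1$ given there makes the total failure probability at most $B^{-1}$ (there is in fact extra slack of order $N_{k+1}^{-1/2}$ between the raw Azuma estimate and the stated $C_1$). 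The only technical subtlety is the martingale-difference step, where bounding the combinatorial multiplicity $\#\{\mathbf J : \pi_\ell(\mathbf J) = \mathbf j_s\}$ requires iterated application of Lemma \ref{prelim-lemma}; the remaining bookkeeping is mechanical.
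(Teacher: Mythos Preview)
Your proof is correct and takes a genuinely different route from the paper's. The paper first decomposes $\mathbb F_{\text{tr}}$ into at most $4^{n-1}n!$ ordered subclasses $\mathcal F_\rho$ (Lemma \ref{ran-lemma1}), and within each subclass builds an explicit martingale: the partial sums of random variables $W_j$, one per element $\mathbf I(j)$ of the subclass (each $W_j$ being the inner sum over $\pmb\iota$), ordered so that the variables $Y_{k+1}(\mathbf i_{\rho(n)}(j),\cdot)$ are independent of all earlier $W_{j'}$ (Lemma \ref{ran-lemma2}). This martingale has length $T \le P_k$ and increments bounded by $4\delta_k$, so $\sum c_j^2 \le 16 P_k \delta_k^2$; Azuma plus a union bound over subclasses and over $\mathfrak A$ then yields the stated $C_1$. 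Your bounded-differences/Doob-martingale argument over the raw Bernoulli variables $Y_{k+1}$ bypasses the subclass decomposition entirely and gives $\sum_s c_s^2 \le C_n\, P_k N_{k+1} \delta_{k+1}^2 = C_n\, P_k \delta_k \delta_{k+1}$, which is smaller than the paper's bound by the factor $\delta_{k+1}/\delta_k = N_{k+1}^{-1}$ (modulo $n$-dependent constants)---this is precisely the extra slack you identify. In the paper's argument the transverse structure is used to produce the ordering of Lemma \ref{ran-lemma1}; in yours it is only used to guarantee that the $\mathbf i_\ell$ are pairwise distinct so that each summand has mean zero. Both uses are legitimate. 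One minor point: your multiplicity bound $n\cdot 4^{n-1}$ is weaker than necessary---the second part of Lemma \ref{prelim-lemma}, applied at level $k+1$, gives $\#\pi_{\ell_0}^{-1}(\mathbf j_s) \le 4$ directly, so $4n$ suffices---but this does not affect the conclusion.
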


Assume for now the claims in steps 1--3.  
\begin{proof}[Conclusion of the proof of Theorem \ref{random-mainprop} (\ref{partc})]
Of the three estimates (\ref{C0-est-step1}), (\ref{transverse-cond-C1}), and (\ref{ran-e21}), 
the first one holds with probability at least $1-B^{-1}$ (Subsection \ref{weasel-sub1}),
the second one holds always, and the third one holds with probability at least
$1-B^{-1}$ as indicated in the last proposition. Combining these estimates yields that
\begin{align*}
&|\Lambda({\mathbf A}_n;\sigma_k)|\\
&\leq 
2^{2n+1}B 2^{k(n + \frac{3}{2})} \Bigl[ \prod_{j=1}^{k+1} N_j^{-\frac{1}{2} + \epsilon_j(n-\frac{1}{2})}\Bigr]\\
&+2^{k(n+1-\epsilon_0)+3} \Bigl[\prod_{j=1}^{k} N_j^{-\epsilon_0 +
\epsilon_j(n +\epsilon_0-1) } \Bigr]N_{k+1}^{n\epsilon_{k+1}}\\
&+
4^n n!\, 2^{k(n+\half)+\half}\Bigl[
\prod_{j=1}^k N_j^{-\half+\epsilon_j(n-\half)}\Bigr] N_{k+1}^{n\epsilon_{k+1}}\times
\Bigl[ \ln(4^n n!\, B \prod_{j=1}^{k+1}N_j^{2Ln}) \Bigr]^{\frac{1}{2}},
\end{align*}
with probability at least $1-2B^{-1}$.
Plugging in $\epsilon_0=\half$, we see after some simple algebra that in this event
$|\Lambda({\mathbf A}_n;\sigma_k)|$ is bounded as indicated in (\ref{critter}).
\end{proof}

\subsubsection{Proof of Lemma \ref{lemma-error-est}}
It suffices to prove (\ref{error-est}), since (\ref{C0-est-step1}) follows directly from it.
We write $\sigma_k = \overline{\sigma}_k + e_k$, where $\overline{\sigma}_k$ is as in (\ref{phik-bar}) so that 
\[ e_{k}(z) =  \left[\frac{1}{P_{k+1} \delta_{k+1}} - \frac{1}{Q_{k+1} \delta_{k+1}} \right] \mathbf 1_{S_{k+1}}(z), \] 
Then
\begin{align*} &
\Lambda({\mathbf A}_n;\sigma_k)=\Lambda({\mathbf A}_n;\overline{\sigma}_k)+E_k,
\text{ where } \\
E_k &= \sum_{\begin{subarray}{c} \pmb{\lambda} \in \{0,1\}^n \\ \lambda_1 + \cdots + \lambda_n \geq 1 \end{subarray}} 
\Lambda({\mathbf A}_n; u_{\lambda_{1}},\dots,u_{\lambda_n}) \; \text{ with } \;
u_{\lambda} = \begin{cases} \overline{\sigma}_k &\text{ if } \lambda = 0, \\ e_k &\text{ if } \lambda = 1.  \end{cases}  
\end{align*} 
We need to show that $|E_k|$ is bounded by the quantity in (\ref{error-est}).

We observe that by the definition of $Q_k$ in (\ref{various-def}) and Theorem \ref{random-mainprop}(\ref{parta}) at step $k$,
\begin{align*}
|\overline{\sigma}_k(z)| &\leq \Biggl[ \frac{1}{Q_{k+1} \delta_{k+1}} + \frac{1}{P_k \delta_k} \Biggr] \mathbf 1_{S_k}(z) = \Biggl[ \frac{N_{k+1}^{\epsilon_{k+1}}}{P_k \delta_k} + \frac{1}{P_{k} \delta_{k}} \Biggr] \mathbf 1_{S_k}(z) \\ & \leq 2 \frac{N_{k+1}^{\epsilon_{k+1}}}{P_k \delta_k} \mathbf 1_{S_k}(z) 
\leq 2^{k+1} \prod_{j=1}^{k+1} N_j^{\epsilon_j} \mathbf 1_{S_k}(z),
\end{align*} 
whereas by Theorem \ref{random-mainprop}(\ref{partb}) at step $k+1$,
\begin{align*} |e_k(z)| &
\leq \frac{|Q_{k+1} - P_{k+1}|}{P_{k+1} Q_{k+1} \delta_{k+1}} 
\mathbf 1_{S_{k+1}}(z) \leq B \frac{1}{P_{k+1} \sqrt{Q_{k+1}} \delta_{k+1}} 
\mathbf 1_{S_{k+1}}(z) \\ &\leq B 2^{\frac{3k}{2}+1} \Bigl[\prod_{j=1}^{k+1} N_j^{-\frac{1}{2} + \frac{3 \epsilon_j}{2}} \Bigr] \mathbf 1_{S_{k+1}}(z). \end{align*}
Therefore for any $\pmb{\lambda} \in \{0,1\}^n$ with $\lambda_1 + \cdots + \lambda_n \geq 1$, there exists an index $1 \leq \ell_0 \leq n$ such that 
\[ \supp \Bigl[ \prod_{\ell=1}^{n} u_{\lambda_{\ell}} \Bigl( \frac{\cdot - c_{\ell}}{r_{\ell}} \Bigr) \Bigr] \subseteq c_{\ell_0} + r_{\ell_0} S_{k+1}. \]
Note also that the estimate on $|e_k|$ is better than the estimate on $|\overline{\sigma}_k|$
if $N_j \geq N$ and $N$ has been chosen large enough.  Hence
\begin{align*} 
&|\Lambda({\mathbf A}_n; u_{\lambda_{1}},\dots,u_{\lambda_n}) |\\
 &\quad \leq \Bigl(2^{k+1} \prod_{j=1}^{k+1} N_j^{\epsilon_j} \Bigr)^{n-1} 
 \Bigl(B 2^{\frac{3k}{2}+1} \prod_{j=1}^{k+1} N_j^{-\frac{1}{2} + \frac{3 \epsilon_j}{2}} 
 \Bigr) |\bigl(c_{\ell_0} + r_{\ell_0}S_{k+1} \bigr)| \\ 
 &\quad \leq 2^{n}B 2^{k(n+ \frac{1}{2})} \Bigl[\prod_{j=1}^{k+1} N_j^{-\frac{1}{2} 
 + \epsilon_j(n + \frac{1}{2})} \Bigr] P_{k+1} \delta_{k+1} \\ 
 &\quad \leq 2^{n+1}B 2^{k(n + \frac{3}{2})} \Bigl[ \prod_{j=1}^{k+1} N_j^{-\frac{1}{2} + \epsilon_j(n-\frac{1}{2})}\Bigr]. \end{align*}
Since the total number of terms in the sum representing $E_k$ is $2^n-1$, the desired conclusion follows.     
\qed

\subsubsection{Proof of Proposition \ref{trans-criterion}}

We need to estimate
\begin{equation} 
\Lambda({\mathbf A}_n;\overline{\sigma}_k)
=\int \prod_{\ell=1}^{n} \overline{\sigma}_k \Bigl( \frac{z - c_{\ell}}{r_{\ell}}\Bigr)  dz 
\label{sigma-bar-int} \end{equation}
for $\mathbf A_n = \{(c_{\ell}, r_{\ell}) : 1 \leq \ell \leq n \} \in \atr$. 
We start by rewriting $\overline{\sigma}_k$ as 
\begin{align*}
\overline{\sigma}_k(z) 
&= \frac{1}{Q_{k+1} \delta_{k+1}} \sum_{X_{k+1}(\overline{\mathbf i})=1} 
\mathbf 1_{I_{k+1}(\overline{\mathbf i})}(z) - \frac{1}{P_k \delta_k} 
\sum_{X_k(\mathbf i)=1} \mathbf 1_{I_k(\mathbf i)}(z) \\ 
&= \frac{1}{Q_{k+1} \delta_{k+1}} \sum_{X_k(\mathbf i)=1} \sum_{i_{k+1}=1}^{N_{k+1}}
 \Bigl(Y_{k+1}(\overline{\mathbf i}) - p_{k+1}\Bigr) \mathbf 1_{I_{k+1}(\overline{\mathbf i})}(z) \\ &= \frac{1}{Q_{k+1} \delta_{k+1}} \sum_{\mathbf i \in \mathbb I_k} X_k(\mathbf i) \sum_{i_{k+1} = 1}^{N_{k+1}} (Y_{k+1}(\overline{\mathbf i}) - p_{k+1}) \mathbf 1_{I_{k+1}(\overline{\mathbf i})}(z).
\end{align*} 
Hence
\begin{multline}
\prod_{\ell=1}^{n} \overline{\sigma}_k \Bigl( \frac{z - c_{\ell}}{r_{\ell}}\Bigr) 
= \frac{1}{(Q_{k+1} \delta_{k+1})^n} \sum_{\mathbf I \in \mathbb I_k^n}  
\Bigg[ \prod_{\ell=1}^{n}  X_k(\mathbf i_\ell) \\ \times \sum_{\pmb{\iota}} 
\Big(\prod_{\ell=1}^{n} \bigl( Y_{k+1}(\overline{\mathbf i}_\ell) - p_{k+1}\bigr)\Big)
\mathbf 1_{\bigcap_{\ell=1}^{n}(c_{\ell} + r_{\ell}I_{k+1}(\overline{\mathbf i}_\ell))}(z)\Bigg] \label{sigmak-product}
\end{multline} 
where $\mathbf I$ and $\pmb{\iota}$ are as in Proposition \ref{trans-criterion}.
Since    
\begin{equation*} \bigcap_{\ell=1}^{n} (c_{\ell} + r_{\ell} I_{k+1}(\overline{\mathbf i}_\ell))
 \subseteq \bigcap_{\ell=1}^{n} (c_{\ell} + r_{\ell} I_{k}(\mathbf i_\ell)), 
\end{equation*}  
a summand in (\ref{sigmak-product}) is nonzero only if the $n$-fold intersection on the 
right hand side above is nonempty, i.e., only if $\mathbf I \in \mathbb F = 
\mathbb F[n,k;\mathbf A_n]$. Splitting $\mathbb F$ further into $\mathbb F_{\text{int}}$ 
and $\mathbb F_{\text{tr}}$ as in Subsection \ref{sec-int-tgt}, we find that
\begin{align*} 
\Lambda({\mathbf A}_n;\overline{\sigma}_k)
  &=  \frac{1}{(Q_{k+1} \delta_{k+1})^n} \Bigl\{ \sum_{\mathbf I \in \mathbb F_{\text{int}}} + \sum_{\mathbf I \in \mathbb F_{\text{tr}}} \Bigr\} 
\Bigg[ \prod_{\ell=1}^{n} X_k(\mathbf i_\ell) \\ &\qquad \times \sum_{\pmb{\iota}} \Big(\prod_{\ell=1}^{n} \bigl( Y_{k+1}(\overline{\mathbf i}_\ell) - p_{k+1}\bigr) \Big) \Big|\bigcap_{\ell=1}^{n}(c_{\ell} + r_{\ell}I_{k+1}(\overline{\mathbf i}_\ell)) \Big| \Bigg] \\ &:= \Xi_{\text{int}} + \Xi_{\text{tr}}. \end{align*} 
We treat these two sums separately.

Since $\mathbf A_n \in \atr$, we have $\#(\mathbb F_{\text{int}}) < P_k^{1 - \epsilon_0}$, therefore  
\begin{align*}
\bigl|\Xi_{\text{int}} \bigr| &\leq \frac{1}{(Q_{k+1} \delta_{k+1})^n} \sum_{\mathbf I \in \mathbb F_{\text{int}}} \sum_{\pmb{\iota}} \prod_{\ell=1}^n \bigl|Y_{k+1}(\overline{\mathbf i}_\ell) - p_{k+1}\bigr| \times \bigl|\bigcap_{\ell=1}^{n} \bigl(c_{\ell} + r_{\ell} I_{k+1}(\overline{\mathbf i}_{\ell}) \bigr) \bigr|  \\ 
&\leq \frac{1}{(Q_{k+1} \delta_{k+1})^n}
\sum_{\mathbf I \in \mathbb F_{\text{int}}} \sum_{\pmb{\iota}}\bigl|\bigcap_{\ell=1}^{n} \bigl(c_{\ell} + r_{\ell} I_{k+1}(\overline{\mathbf i}_{\ell}) \bigr) \bigr| \\
&\leq \frac{4P_k^{1 - \epsilon_0} N_{k+1} \delta_{k+1}}{(Q_{k+1} \delta_{k+1})^n} \\
&\leq 2^{k(n+1 - \epsilon_0)+3} \Bigl[ \prod_{j=1}^{k} N_j^{-\epsilon_0 + \epsilon_j(n + \epsilon_0 - 1)} \Bigr] N_{k+1}^{n\epsilon_{k+1}},
\end{align*} 
where at the third step we have used Lemma \ref{ran-lemma3} below to estimate the number of non-zero summands in the inner sum on the second line by $4N_{k+1}$.

On the other hand, by (\ref{trans-crit})
\begin{equation*}
\bigl|\Xi_{\text{tr}}\bigr|
\leq \frac{C_1(k,n,\epsilon_0)}{(Q_{k+1}\delta_{k+1})^n}
\leq C_1(k,n,\epsilon_0)2^{kn}\Bigl[\prod_{j=1}^{k+1}N_j^{n\epsilon_j}\Bigr].
\end{equation*}
Combining the two estimates, we get (\ref{transverse-cond-C1}).
\qed

\begin{lemma}\label{ran-lemma3}
For each fixed $\mathbf I \in \mathbb I_k$, there are at most $4N_{k+1}$ distinct choices of 
$\pmb{\iota} = (i_{k+1,1}, \cdots, i_{k+1,n})$ such that 
\begin{equation}\label{ran-e40}
\bigcap_{\ell=1}^{n} 
\bigl(c_{\ell} + r_{\ell} I_{k+1}( \overline{\mathbf i}_{\ell})\bigr)
\neq\emptyset.
\end{equation}
\end{lemma}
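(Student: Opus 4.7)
The plan is to realize the set of valid $\pmb{\iota}$ as the image of a vector-valued step function on a single short interval, and then count its distinct values by counting break points.

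First I would set $K' := \bigcap_{\ell=1}^n (c_\ell + r_\ell I_k(\mathbf i_\ell))$. Since $I_{k+1}(\overline{\mathbf i}_\ell) \subseteq I_k(\mathbf i_\ell)$ for every $\ell$, any $z$ making (\ref{ran-e40}) hold must lie in $K'$, and if $K'$ is empty there is nothing to prove. Conversely, for every $z \in K'$ and every $\ell$ exactly one of the $N_{k+1}$ subintervals $c_\ell + r_\ell I_{k+1}(\mathbf i_\ell, i)$, $i \in \{1, \ldots, N_{k+1}\}$, contains $z$; call this index $f_\ell(z)$. Then every valid tuple appears as $\pmb{\iota}(z) := (f_1(z), \ldots, f_n(z))$ for some $z \in K'$, reducing the problem to counting the distinct values of the vector step function $\pmb{\iota}(\cdot)$ on $K'$.

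Next I would bound the number of distinct values by $1$ plus the total number of interior jumps of $f_1, \ldots, f_n$ in $K'$. Each $f_\ell$ is nondecreasing with jumps at the evenly spaced points $c_\ell + r_\ell(\alpha_k(\mathbf i_\ell) + i\delta_{k+1})$, $i \in \mathbb Z$, of spacing $r_\ell \delta_{k+1}$. The key geometric input is $|K'| \leq \min_{\ell'} r_{\ell'}\, \delta_k$, which follows from the inclusion $K' \subseteq c_{\ell'} + r_{\ell'} I_k(\mathbf i_{\ell'})$ applied to every $\ell'$. This yields the per-coordinate jump count $|K'|/(r_\ell \delta_{k+1}) \leq (\min_{\ell'} r_{\ell'}/r_\ell)\, N_{k+1} \leq N_{k+1}$.

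To reach the dimension-free bound $4N_{k+1}$ claimed in the statement, I would nominate the coordinate $\ell^*$ minimizing $r_{\ell^*}$ as a pivot: the $N_{k+1}$ possible values of $f_{\ell^*}$ partition $K'$ into pivot cells of length at most $r_{\ell^*} \delta_{k+1}$, which is no greater than the jump spacing of any other $f_\ell$. The main obstacle is showing that each pivot cell contributes only a uniformly bounded number of tuples, independently of $n$: a naive summation of the per-coordinate jump counts yields only $O(nN_{k+1})$ tuples, and the sharper dimension-free constant must be obtained by exploiting the requirement that all intervals $c_\ell + r_\ell I_{k+1}(\overline{\mathbf i}_\ell)$ overlap a common short subinterval inside each pivot cell, which forces the remaining indices $i_{k+1,\ell}$ to collapse to a uniformly bounded collection of admissible configurations per pivot cell.
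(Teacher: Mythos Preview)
Your parametrization by $z\in K'$ and jump-counting are correct, but the argument is more elaborate than needed and leaves a genuine gap at the end. The paper's proof is two lines: any admissible $\pmb{\iota}$ gives $(\overline{\mathbf i}_1,\ldots,\overline{\mathbf i}_n)\in\mathbb F[n,k+1;\mathbf A_n]$; fix $i_{k+1,1}$ (at most $N_{k+1}$ choices), which fixes $\overline{\mathbf i}_1$, and then Lemma~\ref{prelim-lemma} (the four-to-one bound~(\ref{421}) for the projection $\pi_1$, applied at level $k+1$) gives at most $4$ completions $(\overline{\mathbf i}_2,\ldots,\overline{\mathbf i}_n)$. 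That yields $4N_{k+1}$ immediately, with no need to introduce $K'$, the step functions $f_\ell$, or a distinguished pivot coordinate.

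The ``main obstacle'' you flag --- a dimension-free bound on the number of tuples per pivot cell --- is exactly the content of~(\ref{421}), so at that point you should invoke Lemma~\ref{prelim-lemma} rather than the unproved ``collapse'' heuristic in your last sentence. Note that your jump-counting, carried through honestly, gives only $n$ tuples per pivot cell (each of the $n-1$ non-pivot coordinates jumps at most once in a cell of length $\le r_{\ell^*}\delta_{k+1}$, since its jump spacing is $r_\ell\delta_{k+1}\ge r_{\ell^*}\delta_{k+1}$), hence $nN_{k+1}$ in total. That bound suffices for every application of the lemma in the paper, but it does not by itself recover the stated constant $4$; for that you must cite~(\ref{421}) directly.
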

\begin{proof}
Suppose that (\ref{ran-e40}) holds, then
\begin{equation}\label{ran-e32}
\big(\overline{\mathbf i}_{1}, \cdots, \overline{\mathbf i}_{n} \big)
\in {\mathbb F}[n,k+1,{\mathbf A}_n].
\end{equation}
If $i_{k+1,1}$ is fixed, this fixes $\overline{\mathbf i}_1$ and it follows from Lemma \ref{prelim-lemma} that the number of possible tuples $(\overline{\mathbf i}_2, \cdots, \overline{\mathbf i}_n)$ such that (\ref{ran-e32}) holds is at most 4. Hence the number of possible choices of $(i_{k+1,2},\dots,i_{k+1,n})$ is at most 4.  This proves the claim, since there are at most $N_{k+1}$ choices of $i_{k+1,1}$.
\end{proof}


\subsubsection{Proof of Proposition \ref{prop-C0-final}} 

The heart of the proof is a convenient re-indexing of the sum in (\ref{trans-crit}) that permits the application of Azuma's inequality from Subsection \ref{sub-prob}. The next lemma is a preparatory step for arranging this sum in the desired form. The lemma following it completes the verification of the martingale criterion.   
\begin{lemma}\label{ran-lemma1}
Fix ${\mathbf A}_n\in{\mathfrak A}$.  Then there is a decomposition of ${\mathbb F}_{\text{tr}}$
into at most $4^{n-1}n!$ subclasses such that
\begin{enumerate}[(a)]
\item For all $1\leq\ell\leq n$, $\pi_\ell$ is injective on each subclass. \label{decomposition-lemma-parta}
\item For each subclass, there is a permutation $\rho$ of $\{1,\dots,n\}$ such that
\begin{equation}\label{ran-e1}
\alpha_k({\mathbf i}_{\rho(1)}) < \dots <\alpha_k({\mathbf i}_{\rho(n)})
\end{equation}
for all ${\mathbf I}=({\mathbf i}_1,\dots,{\mathbf i}_n)$ in the subclass.
\end{enumerate}
\end{lemma}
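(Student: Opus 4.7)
The plan is to use the already-established Corollary \ref{prelim-cor1} to get part (a) for free, and then refine the resulting partition by the order type of the $\alpha_k$-values to secure part (b). Concretely, Corollary \ref{prelim-cor1} produces a partition of $\mathbb F$ into at most $4^{n-1}$ subsets on each of which every $\pi_\ell$ is injective; restricting each subset to $\mathbb F_{\text{tr}}$ gives the desired injectivity on each piece of $\mathbb F_{\text{tr}}$, so (a) will already hold before any refinement.

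For (b), the first step is to observe that for every fixed $\mathbf I = (\mathbf i_1,\dots,\mathbf i_n) \in \mathbb F_{\text{tr}}$, the $n$ scalars $\alpha_k(\mathbf i_1),\dots,\alpha_k(\mathbf i_n)$ are pairwise distinct. Indeed, for any $\ell\neq\ell'$ the hypothesis $\mathbf I\notin\mathbb F_{\text{int}}(\ell,\ell')$ means that either $\mathbf i_\ell'\neq\mathbf i_{\ell'}'$, in which case the telescoping formula (\ref{alphai-def}) forces $|\alpha_k(\mathbf i_\ell)-\alpha_k(\mathbf i_{\ell'})|\geq \delta_k$ (the contribution $(i_{\ell k}-i_{\ell' k})\delta_k$ cannot cancel the earlier-scale gap of at least $\delta_{k-1}=N_k\delta_k$), or else $\mathbf i_\ell'=\mathbf i_{\ell'}'$ but $|i_{\ell k}-i_{\ell' k}|>4$, in which case $|\alpha_k(\mathbf i_\ell)-\alpha_k(\mathbf i_{\ell'})|>4\delta_k$. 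Either way the two values are strictly different.

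Once this is in hand, each $\mathbf I\in\mathbb F_{\text{tr}}$ determines a unique permutation $\rho_{\mathbf I}\in S_n$ such that $\alpha_k(\mathbf i_{\rho_{\mathbf I}(1)})<\dots<\alpha_k(\mathbf i_{\rho_{\mathbf I}(n)})$. The plan is then to refine each of the $4^{n-1}$ pieces coming from Corollary \ref{prelim-cor1} according to the value of $\rho_{\mathbf I}$, producing at most $4^{n-1}\cdot n!$ subclasses in total (many possibly empty, which is harmless). Property (b) holds by construction within each subclass, and property (a) is preserved under passage to subsets, so both conditions hold simultaneously.

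Essentially no genuine obstacle arises here: the only point requiring care is the elementary separation estimate for $\alpha_k$-values on $\mathbb F_{\text{tr}}$ sketched above, and the count $4^{n-1}n!$ is exactly the product of the two refinement costs.
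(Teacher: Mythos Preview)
Your proof is correct and essentially the same as the paper's: both arguments combine Corollary \ref{prelim-cor1} (giving the $4^{n-1}$ injectivity pieces) with the decomposition by the order type $\rho_{\mathbf I}$ of the $\alpha_k$-values (giving the factor $n!$). The only difference is the order in which the two refinements are performed---you first apply the corollary and then split by $\rho$, whereas the paper first splits by $\rho$ and then applies the corollary to each $\mathcal F_\rho$---which is immaterial. Your separation argument for the $\alpha_k$-values is slightly more detailed than the paper's one-line claim (\ref{ran-e4}), but both yield the distinctness needed to define $\rho_{\mathbf I}$.
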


\begin{proof}
Let ${\mathbf I}=({\mathbf i}_1,\dots,{\mathbf i}_n)\in {\mathbb F}_{\text{tr}}$, 
then for all $\ell\neq\ell'$
we have
\begin{equation}\label{ran-e4}
|\alpha_k({\mathbf i}_{\ell}) -\alpha_k({\mathbf i}_{\ell'})|>4\delta_k.
\end{equation}
Thus for every $\mathbf I$, all $\alpha_k({\mathbf i}_{\ell})$, $1\leq \ell\leq n$, 
are distinct, and in particular there is a permutation
$\rho=\rho({\mathbf I})$ such that (\ref{ran-e1}) holds for that ${\mathbf I}$.  Let
$\calf_\rho=\{{\mathbf I}: \ \rho({\mathbf I})=\rho\}$ for each such permutation.  
By Corollary \ref{prelim-cor1}, 
each $\calf_\rho$ can be decomposed further into at most $4^{n-1}$ subsets on which
all the projections $\pi_\ell$ are injective.
\end{proof}

By a slight abuse of notation, we will continue to use $\calf_\rho$ to denote a subclass
of ${\mathbb F}_{\text{tr}}$ such that both (i) and (ii) hold for the permutation $\rho$. In view of Lemma \ref{ran-lemma1}, it suffices to estimate
\begin{equation}\label{ran-e2}
\Biggl|\sum_{\mathbf I \in \calf_\rho}  \prod_{\ell=1}^{n} X_k(\mathbf i_{\ell}) \sum_{\pmb{\iota}} \prod_{\ell=1}^{n} \Bigl( Y_{k+1}(\overline{\mathbf i}_{\ell}) - p_{k+1}\Bigr) \cdot \bigl| \bigcap_{\ell=1}^{n} \bigl(c_{\ell} + r_{\ell} I_{k+1}(\overline{\mathbf i}_{\ell}) \bigr) \bigr| \Biggr| 
\end{equation}
for each such $\calf_\rho$.

Observe that by part (\ref{decomposition-lemma-parta}) of Lemma \ref{ran-lemma1}, the index $\mathbf I$ in the outer sum is in fact determined uniquely
by ${\mathbf i}_{\rho(n)}=\pi_{\rho(n)}({\mathbf I})$. In other words, the elements $\{ \alpha_k(\mathbf i_{\rho(n)}) : \mathbf I \in \mathcal F_{\rho} \}$ are all distinct. Furthermore, the only indices that
contribute to (\ref{ran-e2}) are those with $\prod_{\ell=1}^{n} X_k(\mathbf i_{\ell})=1$.  Accordingly, let
$$
{\mathcal J}=\Big\{{\mathbf I}=({\mathbf i}_{1}, \dots,{\mathbf i}_{n}) 
\in\calf_\rho:
\  \prod_{\ell=1}^{n} X_k(\mathbf i_{\ell})=1\Big\},
$$
and let us arrange the elements of ${\mathcal J}$ in a sequence $\{{\mathbf I}(j)= 
( {\mathbf i}_{1}(j),\dots, {\mathbf i}_{n}(j) ):\ 
j=1,\dots,T \}$
so that
\begin{equation}\label{ran-e3}
\alpha_k({\mathbf i}_{\rho(n)}(1))<\dots<\alpha_k({\mathbf i}_{\rho(n)}(T)).
\end{equation}
For $1 \leq j \leq T$, we define
\begin{equation}\label{ran-e31}
W_j=
\sum_{\pmb{\iota}} \prod_{\ell=1}^{n} 
\Bigl( Y_{k+1}({\mathbf i}_{\ell}(j),i_{k+1,\ell}) - p_{k+1}\Bigr) 
\Bigl| \bigcap_{\ell=1}^{n} 
\bigl(c_{\ell} + r_{\ell} I_{k+1}( {\mathbf i}_{\ell}(j),i_{k+1,\ell}  )
 \bigr) \Bigr|,
\end{equation}
where the summation index $\pmb{\iota}=(i_{k+1,1},\dots,i_{k+1,n} )$ is as in the 
statement of Proposition  \ref{trans-criterion}, hence
ranges over all vectors in $\{1,\dots,N_{k+1}\}^n$. We also let $W_0=0$. Then the sum in (\ref{ran-e2}) is simply $W_1 + \cdots + W_T$.  

\begin{lemma}\label{ran-lemma2}
$\{W_j:\ 0\leq j\leq T\}$ is a martingale difference sequence (i.e. the sequence
$\{W_1+\dots+W_m:\ 1\leq m\leq T\}$ is a martingale), with $|W_j| \leq 4 \delta_k$ for all $1 \leq j \leq T$. 
\end{lemma}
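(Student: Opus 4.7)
For the uniform bound $|W_j| \leq 4\delta_k$: the key observation is that for each fixed $\ell$, the sub-intervals $\{c_\ell + r_\ell I_{k+1}(\mathbf{i}_\ell(j), i) : 1 \leq i \leq N_{k+1}\}$ tile $c_\ell + r_\ell I_k(\mathbf{i}_\ell(j))$ up to endpoints. Consequently, each point $z$ lies in $\bigcap_\ell (c_\ell + r_\ell I_{k+1}(\overline{\mathbf{i}}_\ell(j)))$ for at most one value of $\pmb{\iota}$, so
\[
\sum_{\pmb{\iota}}\Bigl|\bigcap_{\ell=1}^{n} \bigl(c_\ell + r_\ell I_{k+1}(\overline{\mathbf{i}}_\ell(j))\bigr)\Bigr| \leq \bigl|c_1 + r_1 I_k(\mathbf{i}_1(j))\bigr| = r_1 \delta_k \leq 2\delta_k.
\]
Combined with $|Y_{k+1}(\cdot) - p_{k+1}| \leq 1$, this yields $|W_j| \leq 2\delta_k \leq 4\delta_k$.

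For the martingale difference property I would introduce the filtration
\[
\mathcal{I}_j := \{\mathbf{i}_\ell(j') : 1 \leq j' \leq j,\ 1 \leq \ell \leq n\}, \qquad \mathcal{G}_j := \sigma\bigl(Y_{k+1}(\mathbf{i}, i) : \mathbf{i} \in \mathcal{I}_j,\ 1 \leq i \leq N_{k+1}\bigr),
\]
with $\mathcal{G}_0$ the trivial $\sigma$-algebra (recall that $\mathbf{X}_k$ is already deterministic by the time we reach step $k+1$). Clearly $W_j$ is $\mathcal{G}_j$-measurable, so the task is to verify $\mathbb{E}(W_j \mid \mathcal{G}_{j-1}) = 0$. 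The crucial point is that $\mathbf{i}_{\rho(n)}(j)$ is a fresh multi-index, in the sense that $\mathbf{i}_{\rho(n)}(j) \notin \mathcal{I}_{j-1}$ and $\mathbf{i}_{\rho(n)}(j) \neq \mathbf{i}_\ell(j)$ for every $\ell \neq \rho(n)$. Both statements follow from part (b) of Lemma~\ref{ran-lemma1} combined with the ordering~\eqref{ran-e3}: for $j' < j$ and any $\ell$ one has
\[
\alpha_k(\mathbf{i}_\ell(j')) \leq \alpha_k(\mathbf{i}_{\rho(n)}(j')) < \alpha_k(\mathbf{i}_{\rho(n)}(j)),
\]
while within a single tuple $\mathbf{I}(j)$ the values $\alpha_k(\mathbf{i}_\ell(j))$ are pairwise distinct. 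Since the $Y_{k+1}$ variables are i.i.d., the family $\{Y_{k+1}(\mathbf{i}_{\rho(n)}(j), i) : 1 \leq i \leq N_{k+1}\}$ is jointly independent of $\mathcal{G}_{j-1}$ and of $\{Y_{k+1}(\mathbf{i}_\ell(j), \cdot) : \ell \neq \rho(n)\}$.

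To conclude, in each summand of~\eqref{ran-e31} I would isolate the factor $(Y_{k+1}(\mathbf{i}_{\rho(n)}(j), i_{k+1,\rho(n)}) - p_{k+1})$. This factor has mean zero and, by the independence just established, is independent of both $\mathcal{G}_{j-1}$ and the product of the remaining $n-1$ Bernoulli factors (the geometric weight $|\bigcap_\ell (c_\ell + r_\ell I_{k+1}(\overline{\mathbf i}_\ell))|$ being deterministic). Taking conditional expectation summand by summand therefore gives $\mathbb{E}(W_j \mid \mathcal{G}_{j-1}) = 0$. The only step that requires genuine care is the disjointness claim for $\mathbf{i}_{\rho(n)}(j)$; this is exactly the structural input that the decomposition in Lemma~\ref{ran-lemma1} and the ordering~\eqref{ran-e3} were designed to deliver, and once it is in hand, everything else reduces to independence and mean-zero of the underlying Bernoulli variables.
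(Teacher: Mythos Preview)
Your proof is correct and follows essentially the same strategy as the paper: the martingale property hinges on the same ``freshness'' observation that $\mathbf i_{\rho(n)}(j)$ does not appear among the indices $\{\mathbf i_\ell(j'): j'<j,\ 1\leq\ell\leq n\}$ nor among $\{\mathbf i_\ell(j): \ell\neq\rho(n)\}$, established via Lemma~\ref{ran-lemma1}(b) and the ordering~\eqref{ran-e3}. Your use of an explicit filtration $\mathcal G_j$ is slightly cleaner than the paper's direct conditioning on $W_1,\dots,W_{j-1}$, but the substance is identical.

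The one genuine difference is in the bound $|W_j|\leq 4\delta_k$. The paper argues by counting: Lemma~\ref{ran-lemma3} shows that at most $4N_{k+1}$ choices of $\pmb\iota$ yield a nonempty intersection, and each contributes at most $\delta_{k+1}$, giving $4N_{k+1}\delta_{k+1}=4\delta_k$. Your disjointness argument---that the intersections over distinct $\pmb\iota$ are essentially pairwise disjoint and all lie in a single interval of length $r_1\delta_k\leq 2\delta_k$---is both simpler and sharper (it gives $2\delta_k$ rather than $4\delta_k$), and it avoids invoking Lemma~\ref{ran-lemma3} at this point. Either constant suffices for the subsequent application of Azuma's inequality.
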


\begin{proof}
We need to prove that ${\mathbb E}(W_m|W_1,\dots,W_{m-1})=0$.
It suffices to demonstrate that the random variables
$Y_{k+1}({\mathbf i}_{\rho(n)}(m),\cdot)$ are
\begin{enumerate}[(i)]
\item independent of all $Y_{k+1}({\mathbf i}_{\rho(\ell)}(m),\cdot)$ with $\ell<n$,
\item independent of all $W_j$ with $j<m$.
\end{enumerate}
Once we have this, the desired conclusion follows by setting $\mathcal W$ to be the collection of random variables in (i) and (ii) above, and $\mathcal W' = \mathcal W \setminus \{W_1, \cdots, W_{m-1} \}$, so that  \begin{align*}
\mathbb E(W_m | W_1, \cdots W_{m-1}) &= \mathbb E_{\mathcal W'} \Bigl[\mathbb E\bigl(W_m \bigl| \mathcal W \bigr) \Bigr] \\ 
&= \mathbb E_{\mathcal W'} \left[ \sum_{\pmb \iota} F_{\pmb \iota, m}(\mathcal W) \mathbb E \left( Y_{k+1}(\mathbf i_{\rho(n)}(m), i_{k+1, \rho(n)}) - p_{k+1}\right)  \right] \\ &= 0.
\end{align*}
Here $\{F_{\pmb \iota,m}\}$ are measurable functions of $\mathcal W$ specified by the expression (\ref{ran-e31}) for $W_m$ but whose exact functional forms are unimportant.  

By (\ref{ran-e1}), we have
$$
\alpha_k({\mathbf i}_{\rho(\ell)}(m)) <\alpha_k({\mathbf i}_{\rho(n)}(m)),\ \ell<n,
$$
which implies immediately the first claim (i).
It remains to prove (ii). Observe that $W_j$ depends only on 
$Y_{k+1}({\mathbf i}_{\ell}(j),\cdot)$, $1\leq\ell\leq n$, hence it suffices 
to prove that
$$
{\mathbf i}_{\rho(n)}(m)\notin\Big\{{\mathbf i}_{\rho(\ell)}(j):\ 
1\leq \ell\leq n,\ 1\leq j<m\Big\}.
$$
But this follows from
$$
\alpha_k({\mathbf i}_{\rho(\ell)}(j)) \leq \alpha_k({\mathbf i}_{\rho(n)}(j))
<\alpha_k({\mathbf i}_{\rho(n)}(m)) ,\ \ell\leq n, \ j<m
$$
where we used (\ref{ran-e1}) again and then (\ref{ran-e3}).

It remains to prove the almost sure bound on $W_j$. Indeed, by Lemma \ref{ran-lemma3} the number of summands in (\ref{ran-e31}) that make 
a non-zero contribution to $W_j$ is bounded by $4N_{k+1}$.
Since the size of each summand is bounded by $\delta_{k+1}$, it follows that
$|W_j|\leq 4N_{k+1}\delta_{k+1}=4 \delta_k$, as claimed.

\end{proof}

\begin{proof}[Conclusion of the proof of Proposition \ref{prop-C0-final}]
In light of Lemma \ref{ran-lemma2}, we apply Azuma's inequality (Lemma \ref{azuma}) to the martingale sequence $U_j = W_1+\dots+W_j$, with $c_j=4\delta_k$ and 
$$\lambda=4 \delta_k\sqrt{2P_k}\sqrt{\ln(4^{n}n!B\delta_{k+1}^{-2Ln})},
$$ and obtain
\begin{equation*}
{\mathbb P}\Big((\ref{ran-e2})>\lambda\Big)
\leq 2\exp(-\frac{\lambda^2}{32\delta_k^2 T})
\leq 2\exp(-\frac{\lambda^2}{32 \delta_k^2P_k})
\leq \frac{\delta_{k+1}^{2Ln}}{4^{n-1}n!B}.
\end{equation*}
Since there are at most $4^{n-1}n!$ classes $\calf_\rho$, the probability that 
(\ref{ran-e2})$>\lambda$ for at least one of them is bounded by $B^{-1}\delta_{k+1}^{2Ln}$.
Summing over such classes, we see that
\begin{equation*}
{\mathbb P}\Big(\text{LHS of }(\ref{trans-crit})>4^{n-1}n!\,\lambda\Big)
\leq \frac{\delta_{k+1}^{2Ln}}{B}.
\end{equation*}
Finally, since $\#(\mathfrak A)=\delta_{k+1}^{-2Ln}$, there is a probability of at least
$1-\frac{1}{B}$ that (\ref{trans-crit}) holds for every ${\mathbf A}\in \atr$ with
$$
C_1(k,n,\epsilon_0)=4^{n-1}n!\,\lambda=4^{n}n!\, \delta_k \sqrt{2P_k}\sqrt{\ln(4^{n}n!B\delta_{k+1}^{-2Ln})}.
$$
By Theorem \ref{random-mainprop}(a) at step $k$,
$$C_1(k,n,\epsilon_0)
\leq 4^n n!\, 2^{\frac{k+1}{2}}\Bigl[
\prod_{j=1}^k N_j^{-\frac{1+\epsilon_j}{2}}\Bigr] \times
\Bigl[ \ln(4^n n!\, B \prod_{j=1}^{k+1}N_j^{2Ln}) \Bigr]^{\frac{1}{2}}.
$$
This completes the proof of the proposition. 
\end{proof}

\noindent {\bf{Remark:}} The following argument, communicated to us by an anonymous referee, provides an alternative strategy for estimating the expression in (\ref{ran-e2}). It involves a finer decomposition of the class $\mathcal F_{\rho}$ into at most $2^{n(n-1)/2}$ subclasses $\mathcal F_{\rho}'$ such that in addition to (a) and (b) of Lemma \ref{ran-lemma1}, each $\mathcal F_{\rho}'$ satisfies a stronger hypothesis on the projections $\pi_{\ell}$, namely
\begin{equation*}
\begin{aligned}  
\text{\em{ (c) For every }} &\mathbf I = (\mathbf i_1, \cdots, \mathbf i_n) \in \mathcal F_{\rho}'  \text{\em{ and }} \ell, m \in \{1, 2, \cdots, n \} \text{\em{ with }} \ell \ne m,  \\ 
&\mathbf i_{\ell} \not\in \pi_m(\mathcal F_{\rho}') \quad \text{\em{ and }} \quad \mathbf i_m \not\in \pi_{\ell}(\mathcal F_{\rho}').  
\end{aligned} 
\end{equation*} 
In other words, all the $k$-dimensional multi-indices occurring as the components of the elements of $\mathcal F_{\rho}'$ are distinct. 

With such a construction, the random variables $\{ W_j : 1 \leq j \leq T \}$ in (\ref{ran-e31}) are independent, hence Azuma's inequality can be replaced by classical Bernstein's. This results in a slight improvement of the bound of $C_1(k,n,\epsilon_0)$ in the proof of Proposition \ref{prop-C0-final} but does not improve the range of $p$ in the statement of the main theorems. 

The condition (c) above is achieved as follows. Fix $\ell, m \in \{1, 2, \cdots, n \}$, $\ell \neq m$. Define an $(\ell, m)$-chain of length $M$ in $\mathcal F_{\rho}$ to be a maximal sequence $\mathfrak C = \{\mathbf I_s = (\mathbf i_{1,s}, \cdots, \mathbf i_{n,s})  : 0 \leq s \leq M  \}$  with the property that 
\begin{equation} \label{finer-decomp}
\begin{aligned} 
\alpha_k(\mathbf i_{\ell, 0}) < \alpha_k(\mathbf i_{m,0}) &= \alpha_k(\mathbf i_{\ell, 1}) < \alpha_k(\mathbf i_{m, 1}) = \cdots < \alpha_k(\mathbf i_{m,M}), \text{ so that } \\
\mathbf i_{\ell, 0} &\not\in \pi_m(\mathcal F_{\rho}) \quad \text{ and } \quad \mathbf i_{m, L} \not\in \pi_\ell(\mathcal F_{\rho}).
\end{aligned} 
\end{equation}  
It is easy to see that $\mathcal F_{\rho}$ decomposes into a disjoint union of $(\ell, m)$-chains of various lengths. We now decompose 
\begin{align*} 
\mathcal F_{\rho} &= \mathcal F_{\rho}(\text{even}) \cup \mathcal F_{\rho}(\text{odd}), \text{ where } \\
\mathcal F_{\rho}(\text{even}) &= \left\{ \mathbf I_{2n}: n \in \mathbb Z_{\geq 0}, \mathbf I_{2n}\in \mathfrak C \text{ for some $(\ell, m)$-chain } \mathfrak C  \right\}, \\ 
\mathcal F_{\rho}(\text{odd}) &= \left\{ \mathbf I_{2n+1}: n \in \mathbb Z_{\geq 0}, \mathbf I_{2n}\in \mathfrak C \text{ for some $(\ell, m)$-chain } \mathfrak C  \right\}.
\end{align*}
Then (\ref{finer-decomp}) implies that (c) holds for the given choice of $(\ell, m)$. Performing the above splitting procedure iteratively for every pair $(\ell, m)$ results in a decomposition of $\mathcal F_{\rho}$ into at most $2^{n(n-1)/2}$ subclasses, each of which satisfies (a), (b) and (c).  

\subsection{Existence of the limiting measure}
\label{weasel-sub3}

\subsubsection{Proof of Theorem \ref{random-mainprop}(d)} 

Let $\mathbf i \in \mathbb I_k$ with 
$X_k(\mathbf i) = 1$. Applying Bernstein's inequality to the random
variables $X_{k+1}(\overline{\mathbf i})-p_{k+1} = Y_{k+1}(\overline{\mathbf i})
-p_{k+1}$,  with $\sigma^2 = N_{k+1}p_{k+1}$ and $\lambda = (8p_{k+1} \ln(4BP_k)/N_{k+1})^{\frac{1}{2}}$, we obtain
\[ \mathbb P \left( \Bigl| \sum_{i_{k+1}=1}^{N_{k+1}} \bigl[ Y_{k+1}(\overline{\mathbf i}) - p_{k+1} \bigr] \Bigr| > N_{k+1} \lambda \right) \leq 4 \exp \left[-\frac{N_{k+1}^2 \lambda^2}{8N_{k+1}p_{k+1}} \right] = \frac{1}{BP_k}. \]  
Since there are $P_k$-many such choices of $\mathbf i$, we find that Theorem \ref{random-mainprop}(\ref{lim-lemma-parta}) holds with probability at least $1 - \frac{1}{B}$, as claimed.

\subsubsection{Proof of Corollary \ref{critter-corollary}(b)} 

\begin{lemma} \label{prop-lim-cond}
Assume that (\ref{boundedness}) and Theorem \ref{random-mainprop}(\ref{lim-lemma-parta}) hold for all $k$. Then 
for all $k \geq 1$, $m \geq 0$ and every $\mathbf i \in \mathbb I_k$ with 
$X_k(\mathbf i) = 1$, 
\begin{equation} 2^{-m} \Bigl[\prod_{r=1}^{m} N_{k+r}^{1-\epsilon_{k+r}}\Bigr]
\leq \sum_{\mathbf j} X_{k+m}(\mathbf i, \mathbf j) 
\leq 2^m \Bigl[\prod_{r=1}^{m} N_{k+r}^{1-\epsilon_{k+r}} \Bigr], \label{lim-lemma-partb-eqn} \end{equation} 
where the sum is taken over all $m$-dimensional multi-indices $\mathbf j$ such that $(\mathbf i, \mathbf j) \in \mathbb I_{k+m}$. 
\end{lemma}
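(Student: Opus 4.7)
The plan is to proceed by induction on $m$, with $k \geq 1$ and $\mathbf i \in \mathbb I_k$ satisfying $X_k(\mathbf i) = 1$ held fixed. Let me abbreviate $T_m := \sum_{\mathbf j} X_{k+m}(\mathbf i, \mathbf j)$ and $A_m := \prod_{r=1}^m N_{k+r}^{1-\epsilon_{k+r}}$, so that the goal is $2^{-m} A_m \leq T_m \leq 2^m A_m$. The base case $m=0$ is immediate: the product is empty and the sum collapses to $X_k(\mathbf i) = 1$, giving $1 \leq 1 \leq 1$.

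For the inductive step, I would expand
\[ T_{m+1} = \sum_{(\mathbf j, j)} X_{k+m+1}(\mathbf i, \mathbf j, j) = \sum_{\mathbf j:\, X_{k+m}(\mathbf i, \mathbf j) = 1}\, \sum_{j=1}^{N_{k+m+1}} Y_{k+m+1}((\mathbf i, \mathbf j), j), \]
using the defining recursion $X_{k+m+1} = X_{k+m} Y_{k+m+1}$. For each surviving multi-index $\widetilde{\mathbf i} = (\mathbf i, \mathbf j) \in \mathbb I_{k+m}$, Theorem \ref{random-mainprop}(\ref{lim-lemma-parta}) applied at index $k+m$ gives
\[ \Bigl| \sum_{j=1}^{N_{k+m+1}} Y_{k+m+1}(\widetilde{\mathbf i}, j) - N_{k+m+1}^{1-\epsilon_{k+m+1}} \Bigr| \leq E_m := \bigl[ 8 N_{k+m+1}^{1-\epsilon_{k+m+1}} \ln(4BP_{k+m}) \bigr]^{1/2}, \]
since $N_{k+m+1} p_{k+m+1} = N_{k+m+1}^{1-\epsilon_{k+m+1}}$. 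Summing the inner bound over the $T_m$ surviving multi-indices yields the identity
\[ T_{m+1} = T_m \cdot N_{k+m+1}^{1-\epsilon_{k+m+1}} (1+\theta_m), \qquad |\theta_m| \leq \eta_m := \Bigl( \frac{8 \ln(4BP_{k+m})}{N_{k+m+1}^{1-\epsilon_{k+m+1}}} \Bigr)^{1/2}. \]

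The only remaining task, and the main obstacle, is to verify that $\eta_m \leq 1/2$ uniformly in $k$ and $m \geq 0$; once this is in hand, the induction closes via $T_{m+1} \leq 2^m (1+\eta_m) A_{m+1} \leq 2^{m+1} A_{m+1}$ and $T_{m+1} \geq 2^{-m}(1-\eta_m) A_{m+1} \geq 2^{-(m+1)} A_{m+1}$. To bound $\eta_m$ I would invoke Theorem \ref{random-mainprop}(a) to write $P_{k+m} \leq 2^{k+m} M_{k+m}$, so that $\ln(4BP_{k+m}) \leq \ln(4B) + (k+m)\ln 2 + \ln M_{k+m}$; because $M_{k+m} \geq N_1^{k+m}$ with $N_1$ chosen large at the start of the construction, this is bounded by, say, $3\ln M_{k+m}$. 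The hypothesis (\ref{boundedness}) at index $k+m$ then gives $\ln M_{k+m}/N_{k+m+1}^{1-\epsilon_{k+m+1}} \leq 2^{-(5+\gamma)(k+m)}/32$, so $\eta_m^2 \leq 2^{-(5+\gamma)(k+m)} \cdot 3/4$, which is far smaller than $1/4$ for every $k+m \geq 1$. This verifies the inductive step and completes the proof; note that the same estimate also shows $\eta_m$ decays geometrically in $k+m$, a fact that will be needed when this lemma is fed back into the proof of (\ref{lim-cond-ineq}).
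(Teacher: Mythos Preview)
Your proof is correct and follows essentially the same induction as the paper: expand $T_{m+1}$ via the recursion, apply Theorem \ref{random-mainprop}(\ref{lim-lemma-parta}) at level $k+m$ to get a multiplicative error $1+\theta_m$, and use (\ref{boundedness}) to show $|\theta_m|\leq \tfrac12$. You give slightly more detail than the paper in bounding $\ln(4BP_{k+m})$ by a constant multiple of $\ln M_{k+m}$, but the argument is the same.
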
 

\begin{proof}
This follows from Theorem \ref{random-mainprop}(\ref{lim-lemma-parta}) by induction on $m$. For $m=0$, (\ref{lim-lemma-partb-eqn}) holds trivially. Assuming that Theorem \ref{random-mainprop}(\ref{lim-lemma-parta}) holds for $m$ and summing over $\overline{\mathbf j} = (\mathbf j, j_{m+1})$, we arrive at the following estimate  
\begin{align*}
\Bigl|\sum_{\overline{\mathbf j}} X_{k+m+1}(\mathbf i, \overline{\mathbf j}) &- \sum_{\mathbf j} X_{k+m}(\mathbf i, \mathbf j) N_{k+m+1}^{1-\epsilon_{k+m+1}} \Bigr| \\
&\leq \sum_{\mathbf j} X_{k+m}(\mathbf i, \mathbf j)\Bigl[ N_{k+m+1}^{1-\epsilon_{k+m+1}} \ln(4BP_{k+m}) \Bigr]^{\frac{1}{2}}, 
\end{align*}  
so that 
\[\left| \frac{\sum_{\overline{\mathbf j}} X_{k+m+1}(\mathbf i, \overline{\mathbf j})}{N_{k+m+1}^{1-\epsilon_{k+m+1}}\sum_{\mathbf j}X_{k+m}(\mathbf i, \mathbf j)} - 1 \right|
 \leq  \sqrt{\frac{\ln(4BP_{k+m})}{N_{k+m+1}^{1-\epsilon_{k+m+1}}}}\leq\half, \] 
where at the last step we used (\ref{boundedness}). Thus  
\[ \frac{1}{2} \leq \frac{\sum_{\overline{\mathbf j}} X_{k+m+1}(\mathbf i, \overline{\mathbf j})}{N_{k+m+1}^{1-\epsilon_{k+m+1}}\sum_{\mathbf j}X_{k+m}(\mathbf i, \mathbf j)} \leq 2 \text{ for all } m \geq 1, \] 
which yields the desired result by induction.  
\end{proof}

\begin{proof}[Proof of Corollary \ref{critter-corollary}(b)]

Since 
\begin{align*} 
\sup_{k': k' \geq k} \sum_{\begin{subarray}{c}\mathbf i : X_k(\mathbf i) = 1 \end{subarray}} \Bigl| \int_{I_k(\mathbf i)} \bigl(\phi_{k'} - \phi_k \bigr)(x) \, dx \Bigr|  
&\leq \sum_{\begin{subarray}{c}\mathbf i : X_k(\mathbf i) = 1 \end{subarray}} \sum_{m=0}^{\infty} \Bigl| \int_{I_k(\mathbf i)} \sigma_{k+m}(x) \, dx \Bigr| \\ 
&\leq P_k \sup_{\mathbf i : X_k(\mathbf i)=1} \Biggl[ \sum_{m=0}^{\infty}\Bigl| 
\int_{I_k(\mathbf i)} \sigma_{k+m}(x) \, dx \Bigr| \Biggr],   
\end{align*} 
it suffices to prove that the quantity in the last line is bounded above by the right hand side of (\ref{lim-cond-ineq}).  
To this end, we fix an $m \geq 0$ and $\mathbf i \in \mathbb I_k$ with $X_k(\mathbf i) = 1$ and write 
\begin{align*}
P_k \int_{I_k(\mathbf i)} \sigma_{k+m}(x) \, dx 
&= \frac{P_k}{P_{k+m+1}} \sum_{\overline{\mathbf j}} X_{k+m+1}(\mathbf i, \overline{\mathbf j})
-  \frac{P_k}{P_{k+m}} \sum_{\mathbf j} X_{k+m}(\mathbf i, \mathbf j) \\
&= \Xi_1 + \Xi_2, \text{ where }
\end{align*} 
\begin{align*} 
\Xi_1 &:=  P_k \Bigl[\frac{1}{P_{k+m+1}} - \frac{1}{Q_{k+m+1}} \Bigr]\sum_{\overline{\mathbf j}} X_{k+m+1}(\mathbf i, \overline{\mathbf j}), \text{ and } \\  
\Xi_2 &:= \frac{P_k}{Q_{k+m+1}}\sum_{\mathbf j} X_{k+m}(\mathbf i, \mathbf j) \sum_{j_{m+1}} \bigl(Y_{k+m+1}(\mathbf i, \mathbf j, j_{m+1}) - p_{k+m+1} \bigr).
\end{align*} 
By Theorem \ref{random-mainprop}(\ref{parta}) and (\ref{lim-lemma-partb-eqn}), we have
\begin{equation} 
\begin{aligned}
|\Xi_1| &\leq  P_k  \frac{|Q_{k+m+1} - P_{k+m+1}|}{P_{k+m+1} Q_{k+m+1}}
 \sum_{\overline{\mathbf j}} X_{k+m+1}(\mathbf i, \overline{\mathbf j}) \\ 
&\leq \frac{BP_k}{P_{k+m+1} \sqrt{Q_{k+m+1}}} 2^{m+1} \Bigl[\prod_{j=1}^{m+1} N_{k+j}^{1-\epsilon_{k+j}}\Bigr] \\ 
&\leq B 2^{\frac{5}{2}(k+m+1)} \Bigl[ \prod_{j=1}^{k+m+1} N_j^{1 - \epsilon_j} \Bigr]^{-\frac{1}{2}}. 
\end{aligned} \label{est-Xi1} \end{equation}  
On the other hand, using both Theorem \ref{random-mainprop}(\ref{lim-lemma-parta}) and (\ref{lim-lemma-partb-eqn}),
\begin{equation} \begin{aligned}
|\Xi_2| 
&\leq \frac{P_k}{Q_{k+m+1}} \sum_{\mathbf j} X_{k+m}(\mathbf i, \mathbf j) 
\Bigl[ 8N_{k+m+1}^{1-\epsilon_{k+m+1}} \ln(BP_{k+m}) \Bigr]^{\frac{1}{2}} \\ 
&\leq \frac{P_k}{Q_{k+m+1}} 2^m \Bigl[\prod_{j=1}^{m} N_{k+j}^{1-\epsilon_{k+j}} \Bigr] 
\times \Bigl[ 8N_{k+m+1}^{1-\epsilon_{k+m+1}} \ln(BP_{k+m}) \Bigr]^{\frac{1}{2}} \\ 
&\leq 2^{2(k+m)} \sqrt{8} \frac{\bigl[\ln(BP_{k+m}) \bigr]^{\frac{1}{2}}}{
N_{k+m+1}^{(1-\epsilon_{k+m+1})/2}}.
\end{aligned} \label{est-Xi2} \end{equation}  
Combining (\ref{est-Xi1}) and (\ref{est-Xi2}) and using (\ref{boundedness}), we obtain 
\begin{align*} |\Xi_1| + |\Xi_2| &\leq 2B 2^{\frac{5}{2}(k+m+1)}\frac{\bigl[\ln(BP_{k+m}) \bigr]^{\frac{1}{2}}}{N_{k+m+1}^{(1 - \epsilon_{k+m+1})/2}}
\leq 2B\cdot 2^{-\frac{(k+m)\gamma}{2}}.
\end{align*}  
The conclusion (\ref{lim-cond-ineq}) follows upon summation in $m$.

\end{proof}


\section{The estimates for $\mathcal M$ and $\mathfrak M$}
\label{sec-choose}

In this section we prove those parts of Theorems \ref{thm-main} and \ref{thm-main3}
that concern the restricted maximal operators with $1<r<2$.  We will do this
by fixing the parameters $N_k$, $\epsilon_k$ of the random construction in
Section \ref{section-random} and showing that the conclusions of the theorems
hold for the sets $S_k$ with those choices of parameters. 
Specifically, the conclusions of Theorem \ref{thm-main} will hold for $S_k$
with
\begin{equation}\label{z-e1}
N_k=N^{k+1},\ \epsilon_k=\frac{1}{k+1},
\end{equation}
and the conclusions of Theorem \ref{thm-main3} will hold for $S_k$ with 
\begin{equation}\label{z-e2}
N_k=N^k,\ \epsilon_k=\epsilon,
\end{equation}
where $N$ is a large integer.

\begin{lemma}\label{z-lemma1}
Let $N_k,\epsilon_k$ be as above with $N$ sufficiently large.  Then:
\begin{enumerate}[(a)]
\item the set $S=\bigcap_{k=1}^\infty S_k$ has Hausdorff dimension $1$ if
(\ref{z-e1}) holds and $1-\epsilon$ if (\ref{z-e2}) holds,
\item assuming (\ref{z-e1}), (\ref{pseudo-restricted}) holds for all $q_0\geq 2$,
\item assuming (\ref{z-e2}), (\ref{pseudo-restricted}) holds for all $2\leq q_0<q_\epsilon$,
where $q_\epsilon=\frac{\epsilon+1}{2\epsilon}$ as in Theorem \ref{thm-main3},
\item assuming either (\ref{z-e1}) or (\ref{z-e2}), (\ref{boundedness}) holds with $\gamma=1$. \label{partd}
\end{enumerate} 
\end{lemma}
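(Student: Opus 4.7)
My plan is to verify the four assertions by direct substitution of the prescribed parameter sequences into the formulas already established in Sections \ref{sec-general-cantor} and \ref{section-random}. All four amount to explicit exponent arithmetic.

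For part (a), I would apply Lemma \ref{Hd-lemma} together with Theorem \ref{random-mainprop}(\ref{parta}), which yields $P_k \asymp \prod_{j=1}^{k} N_j^{1-\epsilon_j}$ up to an inconsequential factor of $2^k$. Under (\ref{z-e1}), $N_j^{1-\epsilon_j} = N^j$, so $P_k \asymp N^{k(k+1)/2}$ while $M_k = N^{k(k+3)/2}$; both $\log P_k/\log M_k$ and $\log(P_k/N_k)/\log M_{k-1}$ then tend to $1$. Under (\ref{z-e2}), $N_j^{1-\epsilon_j} = N^{j(1-\epsilon)}$ and $M_k = N^{k(k+1)/2}$, so the same two ratios both tend to $1-\epsilon$. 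For part (d), the denominator in (\ref{boundedness}) is $N_{k+1}^{1-\epsilon_{k+1}} = N^{k+1}$ under (\ref{z-e1}) and $N^{(k+1)(1-\epsilon)}$ under (\ref{z-e2}), both growing genuinely exponentially in $k$, while $\log M_k = O(k^2\log N)$; with $\gamma = 1$, the bound (\ref{boundedness}) is therefore immediate once $N$ is taken sufficiently large.

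For parts (b) and (c), I invoke Corollary \ref{critter-corollary}(a) with $q_0 = n$ an even integer, and examine the $k$-dependence of the right-hand side of (\ref{critter-cor}). The logarithmic factor $[\ln(\cdots)]^{1/(2n)}$ is subexponential in $k$, and the prefactor $2^{k(1+3/(2n))}$ contributes only a fixed exponential rate, so the decisive quantity is the exponent of $N$ in
\[
\Bigl[\prod_{j=1}^{k} N_j^{-\frac{1}{2} + \epsilon_j(n-\frac{1}{2})}\Bigr]^{1/n} N_{k+1}^{\epsilon_{k+1}}.
\]
A direct computation shows that this exponent equals $(k+1) - k(k+5)/(4n)$ under (\ref{z-e1}) and $\frac{\epsilon(n-q_\epsilon)}{2n}\,k(k+1) + (k+1)\epsilon$ under (\ref{z-e2}), where I use the identity $\epsilon q_\epsilon = (1+\epsilon)/2$. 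In case (b) the $k^2$-coefficient $-1/(4n)$ is strictly negative for every even $n\geq 2$, producing super-exponential $N$-decay which absorbs all remaining factors; this gives (\ref{pseudo-restricted}) at every such $n$. In case (c) the $k^2$-coefficient $\epsilon(n-q_\epsilon)/(2n)$ is negative precisely when $n < q_\epsilon$, which yields (\ref{pseudo-restricted}) for every even integer $n$ in the range $[2, q_\epsilon)$ (a nonempty range because $\epsilon < 1/3$).

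To extend part (c) from even integers to all real $q_0 \in [2, q_\epsilon)$, I would apply Marcinkiewicz interpolation to the restricted strong-type bounds $\|\Phi_k^{\ast}\mathbf 1_\Omega\|_{n_i} \leq C_{n_i}(k)\,|\Omega|^{(n_i-1)/n_i}$ at $n_1 = 2$ (decaying in $k$) and at some fixed even integer $n_2 > q_\epsilon$ (growing in $k$). The interpolated bound at an intermediate $q_0$ with $1/q_0 = (1-\theta)/n_1 + \theta/n_2$ has $N$-exponent $[(1-\theta)\beta(n_1) + \theta\beta(n_2)]\,k(k+1) + O(k)$, where $\beta(n) = \epsilon(n-q_\epsilon)/(2n)$. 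Solving $(1-\theta^{\ast})\beta(n_1) + \theta^{\ast}\beta(n_2) = 0$ and substituting into $1/q_0^{\ast} = (1-\theta^{\ast})/n_1 + \theta^{\ast}/n_2$, the factor $n_2 - n_1$ cancels and one finds $1/q_0^{\ast} = 1/q_\epsilon$; thus the $k^2$-coefficient is strictly negative for all $\theta < \theta^{\ast}$, i.e.\ for all $q_0 \in [2, q_\epsilon)$. The main obstacle is precisely this algebraic identity $q_0^{\ast} = q_\epsilon$: it is what forces the critical threshold in Theorem \ref{thm-main3} to be $(1+\epsilon)/(2\epsilon)$. Everything else reduces to substituting the prescribed $N_k, \epsilon_k$ and comparing asymptotics in $k$.
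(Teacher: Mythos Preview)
Your proof is correct and follows the same route as the paper: direct substitution of the parameter choices into Lemma \ref{Hd-lemma}, Theorem \ref{random-mainprop}(\ref{parta}), and Corollary \ref{critter-corollary}(a), followed by interpolation between even-integer values of $n$ to reach all $q_0$. The only cosmetic differences are the choice of interpolation endpoints (the paper uses the largest even $n_1<q_\epsilon$ and $n_2=n_1+2$) and that the interpolation you need is really just log-convexity of $L^p$ norms via H\"older applied to $g=\Phi_k^*\mathbf 1_\Omega$ rather than Marcinkiewicz; your explicit affine computation showing the threshold lands exactly at $q_\epsilon$ regardless of endpoints is a welcome elaboration the paper leaves implicit.
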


Lemma \ref{z-lemma1} will be proved in Subsections \ref{sub-dim1} and \ref{sub-smalldim} for
(\ref{z-e1}) and (\ref{z-e2}), respectively.

Assuming the lemma, the proof of the restricted maximal estimates is completed as follows.  
By parts (b) and (c) of the lemma, (\ref{pseudo-restricted}) holds with $q_0$ as above.
It follows by Corollary \ref{interpolation-cor} that
\begin{equation}\label{squirrel}
\|\calm_k f\|_{(q_0-1)p}\leq C 2^{-k\eta(p)}\|f\|_p,\ p>\frac{q_0}{q_0-1},
\end{equation}
for the same $q_0$.  

Consider first the case when (\ref{z-e1}) holds.  We claim that then
\begin{equation}\label{squirrel2}
\|\calm_k f\|_{q}\leq C 2^{-k\eta(p)}\|f\|_p
\end{equation}
for all $p,q\in (1,\infty)$.  Indeed, fix $p$ and $q$, and choose $q_0$ large enough so that
$\frac{q_0}{q_0-1}<p$ and $(q_0-1)p>q$.  Since $\calm_k f$ is supported on $[-4,0]$, 
we have
$$
\|\calm_k f\|_q\leq 5^{\frac{1}{q}-\frac{1}{(p-1)q_0}} \|\calm_k f\|_{(q_0-1)p}
$$
by H\"older's inequality.  Combining this with (\ref{squirrel}), we get (\ref{squirrel2}).

Summing up (\ref{squirrel2}) in $k$, we see that $\calm$ is bounded 
from $L^p[0,1]$ to $L^{q}[-4,0]$ for any $p,q\in(1,\infty)$.  
By Lemma \ref{spatial-lemma}, it follows that $\calm$ is bounded 
from $L^p(\rr)$ to $L^{q}(\rr)$ whenever $1<p\leq q<\infty$.

Assume now that (\ref{z-e2}) holds instead.  We claim that in this case (\ref{squirrel2})
holds whenever 
\begin{equation}\label{z-e11}
\frac{1+\epsilon}{1-\epsilon}<p<\infty \text{ and }1<q<\frac{1-\epsilon}{2\epsilon}p.
\end{equation}
Indeed, fix such $p$ and $q$, then $p'<\frac{1+\epsilon}{2\epsilon}=q_\epsilon$.  Choose
$q_0$ so that $p'<q_0<q_\epsilon$, then (\ref{squirrel}) yields
(\ref{squirrel2}) with $q=(q_0-1)p$.  As in the first case, (\ref{squirrel2}) also holds for
$q<(q_0-1)p$ by H\"older's inequality.  Taking $q_0\to q_\epsilon$, we get 
(\ref{squirrel2}) for all $p'<q_\epsilon$ and $q<(q_\epsilon-1)p$, which is equivalent to
(\ref{z-e11}).   We now sum up (\ref{squirrel2}) in $k$ to obtain the boundedness of $\calm$
from $L^p[0,1]$ to $L^{q}[-4,0]$ for $p,q$ as in (\ref{z-e11}).  By (\ref{z-e10}),
$\calm$ is bounded from $L^p(\rr)$ to $L^q(\rr)$ whenever $p\leq q$ and 
(\ref{z-e11}) holds.  
Note that the range of $p,q$ is nonempty whenever $\epsilon<1/3$.

The same conclusions follow automatically for $\mathfrak M$, provided that the weak
limit $\mu$ of $\phi_k$ exists.  But thanks to Lemma \ref{z-lemma1}(d), 
(\ref{boundedness}) holds, hence the existence of $\mu$ follows from 
Theorem \ref{random-mainprop}(d)
for both (\ref{z-e1}) and (\ref{z-e2}).

\subsection{The 1-dimensional case}\label{sub-dim1}

Let $N_k,\epsilon_k$ be as in (\ref{z-e1}).  Then
$M_k=N^{\frac{k(k+3)}{2}}$ and, by Theorem \ref{random-mainprop}(a), 
$$
2^{-k}N^{\frac{k(k+1)}{2}} \leq P_k \leq 2^{k}N^{\frac{k(k+1)}{2}} .
$$
By Lemma \ref{Hd-lemma}(b), 
\begin{align*}
\dim_{\mathbb H}(S) 
&\geq \liminf_{k \rightarrow \infty} \log(P_{k}/N_k)/\log(M_{k-1})\\
&\geq \liminf_{k \rightarrow \infty}\frac{\log (2^{-k}N^{\frac{k(k+1)}{2}-(k+1)})}{\log
(N^{\frac{(k-1)(k+2)}{2}})}=1.
\end{align*}
Hence $S$ has dimension 1.  

To prove Lemma \ref{z-lemma1}(b), it suffices to show that for any $q_0\geq 2$
the right side of (\ref{critter-cor}) is bounded by $C(q_0) 2^{-\eta k}$ with $\eta
=\eta(q_0)>0$.
Suppose first that $q_0=n$ is an even integer.
Plugging our values of $N_j$ and $\epsilon_j$ into (\ref{critter-cor}), we see 
after some straightforward but cumbersome algebra that
\begin{equation*}
\begin{split}
\sup_{\Omega \subseteq [0,1]} \frac{\|\Phi_k^{\ast}\mathbf 1_{\Omega}\|_{n}}{|\Omega|^{\frac{n-1}{n}}} &
\leq 
C (n!\, B)^{1/n} 2^{k(1+\frac{3}{2n})}
N^{-\frac{k^2}{4n}+(1-\frac{5}{4n})k+1}
\\
&\quad \quad \times
\Bigl[ \ln(4^n n!\, B)+(k+1)(k+4)Ln\ln N  \Bigr]^{1/2n},
\end{split}
\end{equation*}  
which is bounded by 
$C(n)2^{-\eta(n)k}$ with $\eta(n)=\frac{1}{4n}>0$ for all even integers $n$. 
The estimate in (b) for all $q_0\geq 2$ (not necessarily an even integer) follows
by interpolation.

Finally, to prove (d) we estimate 
\[
2^{6k}\frac{\ln(M_k)}{N_{k+1}^{1-\epsilon_{k+1}}}
\leq  
\frac{2^{6k-1}\,k(k+3)\ln N}{N^{k+1}}<\frac{1}{32}
\]
for all $k$, provided that $N$ is large enough.

\subsection{The lower-dimensional case}\label{sub-smalldim}
Let $N_k,\epsilon_k$ be as in (\ref{z-e2}).  
Then $M_k=N^{\frac{k(k+1)}{2}}$ and
by Theorem \ref{random-mainprop}(a), 
$$
2^{-k}N^{\frac{k(k+1)}{2}(1-\epsilon)} \leq P_k \leq 2^{k}N^{\frac{k(k+1)}{2}(1-\epsilon)} .
$$
By Lemma \ref{Hd-lemma}(a), 
\begin{align*}
\dim_{\mathbb H}(S) &\leq \liminf_{k \rightarrow \infty} \log(P_k)/\log(M_k)\\
&\leq \liminf_{k \rightarrow \infty} 
\frac{\log(2^k N^{\frac{k(k+1)}{2}(1-\epsilon)})}{\log (N^{\frac{k(k+1)}{2}})}
=1-\epsilon,
\end{align*}
whereas by Lemma \ref{Hd-lemma}(b),
\begin{align*}
\dim_{\mathbb H}(S) 
&\geq \liminf_{k \rightarrow \infty} \log(P_{k}/N_k)/\log(M_{k-1})\\
&\geq \liminf_{k \rightarrow \infty}
\frac{\log (2^{-k}N^{\frac{k(k+1)}{2}(1-\epsilon)-k})}{\log
(N^{\frac{k(k-1)}{2}})}=1-\epsilon.
\end{align*}
Hence $S$ has dimension $1-\epsilon$.

Next, we verify Lemma \ref{z-lemma1}(c).
Plugging (\ref{z-e2}) into (\ref{critter-cor}), we see 
after some more algebra that
\begin{equation*}
\begin{split}
\sup_{\Omega \subseteq [0,1]} \frac{\|\Phi_k^{\ast}\mathbf 1_{\Omega}\|_{n}}{|\Omega|^{\frac{n-1}{n}}} &
\leq 
C (n!\, B)^{1/n} 2^{k(1+\frac{3}{2n})}
N^{\frac{k(k+1)}{2n}(-\half +\epsilon(n-\half))+(k+1)\epsilon}
\\
&\quad \quad \times
\Bigl[ \ln(4^n n!\, B)+(k+1)(k+2)Ln\ln N  \Bigr]^{1/2n}.
\end{split}
\end{equation*}  
This is majorized by $C(n)2^{-\eta(n)k}$ with $\eta(n)=\frac{1+\epsilon}{2n}-\epsilon$.
Note that $\eta(n)>0$ if and only if $\epsilon(n-\half)<\half$, i.e.  
\begin{equation}\label{z-e6}
\epsilon<\frac{1}{2n-1},\hbox{ or }n<\half + \frac{1}{2\epsilon}=q_\epsilon.
\end{equation}
Let $n_1=n_1(\epsilon)$ be the largest even integer such that (\ref{z-e6}) holds, and
let $n_2=n_1+2$.  Interpolating between the estimates for $n_1$ and $n_2$, we
get that 
$$
\sup_{\Omega \subseteq [0,1]} \frac{\|\Phi_k^{\ast}\mathbf 1_{\Omega}\|_{q_0}}{|\Omega|^{\frac{q_0-1}{q_0}}} 
\leq C(q_0)2^{-\eta(q_0)k}
$$
with $\eta(q_0)>0$ for all $q_0<q_\epsilon$.

For part (\ref{partd}), we check as before that 
$$
2^{6k}\frac{\ln (M_k)}{N_{k+1}^{1-\epsilon_{k+1}}}
\leq \frac{ 2^{6k+1}\,k(k+1)\ln N}{N^{(k+1)(1-\epsilon)}}
\leq \frac{1}{32}
$$
for all $k$, if $N$ was chosen large enough. This proves (\ref{partd}) and establishes the existence of $\mu$.


\section{Extension to the unrestricted operator} \label{sec-scales}
It remains to prove the statements for the unrestricted maximal operators $\tilde{\mathcal M}^a$ and $\tilde{\mathfrak M}^a$ claimed in Theorem \ref{thm-main} (\ref{thm-main-unrestricted1}) and Theorem \ref{thm-main3}(\ref{thm-main3-unrestricted1}). Obtaining bounds for global maximal operators using known bounds for single-scale ones is a common theme in the harmonic analysis literature, often involving interpolation and scaling. In this section we present these arguments with the necessary modifications for our problem. The proof naturally splits into two cases $q \geq 2$ and $q < 2$, which are handled in Propositions \ref{scales-prop5} and \ref{p-less-2} respectively. The former follows an approach closely related to \cite{bourg-86}, \cite{schlag-thesis}. The proof for $p = q < 2$ is due to Andreas Seeger, who also indicated to us prior work in this direction \cite{nagel-stein-wainger78}, \cite{christ88-unpublished}. Proposition \ref{p-less-2} combines his argument with interpolation techniques used in a similar setting in \cite{gsw99-pams}. 

We remark that the scaling arguments below are quite general and apply
to any sequence $S_k$ as described in Section 2 subject to the bounds on $\calm_k$ and
(in Lemma \ref{scales-lemma2}) the
subexponential growth of $N_k$.  In other words, we will not be invoking the
probabilistic arguments of Section \ref{section-random}.

Recall the definitions (\ref{max-e2}), (\ref{max-e1}), (\ref{max-e102}), (\ref{max-e104}) and (\ref{max-e101}) of $\tilde{\mathcal M}$, $\tilde{\mathfrak M}$, $\tilde{\mathcal M}^a$, $\tilde{\mathfrak M}^a$  and $\mathcal M_k$ respectively.   
Denote by $A_r[k]$ the averaging operator associated to $\phi_k$:
\begin{equation}\label{average-def}
A_r[k]f(x) = \int f(x + ry) \phi_k(y) \, dy, \quad \text{ where } \quad \phi_k = \frac{1}{|S_k|} \mathbf 1_{S_k}. 
\end{equation}
The main results in this section are the following.

\begin{proposition}\label{scales-prop5}
Fix two exponents $p,q$ satisfying $1 < p \leq q < \infty$, $q \geq 2$. Assume that for some $C>0$ and $\eta_0>0$ we have the estimate
\begin{equation}\label{scales-e3} 
\|\calm_k f\|_q \leq C 2^{-\eta_0 k}\|f\|_p 
\end{equation}
for all $f$ supported on $[0,1]$.  Assume furthermore that $N_k$ have been chosen as
in (\ref{z-e1}) or (\ref{z-e2}).  Then     
$\tilde{\calm}^a$ is bounded from $L^p(\rr)$ to $L^q(\rr)$, with $a = \frac{1}{p} - \frac{1}{q}$. 
\end{proposition}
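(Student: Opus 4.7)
The plan is to decompose dyadically in both $k$ and the dilation $r$, using scaling to reduce each dyadic block to the single-scale operator $\mathcal{M}_k$. Write
\[\tilde{\mathcal{M}}^a f\leq \tilde{\mathcal{N}}^a|f|+\sum_{k\geq 1}\tilde{\mathcal{M}}_k^a|f|,\]
where $\tilde{\mathcal{N}}^a$ is the trivial piece associated with $\phi_1$ (controlled by the Hardy--Littlewood maximal theorem) and $\tilde{\mathcal{M}}_k^a f(x)=\sup_{r>0}r^a|\int f(x+ry)\sigma_k(y)\,dy|$ with $\sigma_k=\phi_{k+1}-\phi_k$. In view of the exponential decay in (\ref{scales-e3}), it will suffice to show $\|\tilde{\mathcal{M}}_k^a f\|_q\leq C\,2^{-\eta_0 k/2}\|f\|_p$ and sum over $k$.

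For each $k$, I would decompose the $r$-range dyadically by letting
\[M_{k,j}^a f(x)=\sup_{r\in[2^j,2^{j+1})}r^a\Bigl|\int f(x+ry)\sigma_k(y)\,dy\Bigr|,\qquad j\in\mathbb{Z},\]
so that $\tilde{\mathcal{M}}_k^a f=\sup_j M_{k,j}^a f$. The substitution $\tilde f_j(v):=f(2^j v)$ converts $M_{k,j}^a$ into a rescaled copy of $\mathcal{M}_k$ acting on $\tilde f_j$. After extending (\ref{scales-e3}) from $[0,1]$ to $L^p(\mathbb{R})$ by spatially decomposing $\tilde f_j$ into unit-length pieces (as in the proof of Lemma \ref{spatial-lemma}) and using the identity $a=1/p-1/q$, the scaling bookkeeping yields
\[\|M_{k,j}^a f\|_q\leq C\,2^{-\eta_0 k}\|f\|_p\qquad\text{uniformly in }j\in\mathbb{Z}.\]

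The main obstacle is that such a uniform-in-$j$ bound is not summable over $j$, so it must be upgraded to a geometrically decaying estimate $\|M_{k,j}^a f\|_q\leq C\,2^{-\eta_0 k}2^{-\eta_1|j|}\|f\|_p$ with $\eta_1=\eta_1(p,q)>0$. I would obtain the $|j|$-decay by interpolating the $L^p\to L^q$ scaling bound against an auxiliary $L^2\to L^2$ estimate, exploiting the assumption $q\geq 2$. The $L^2$ bound in turn combines two ingredients: for $j<0$ (small $r$), the cancellation $\int\sigma_k=0$ produces a Taylor-style gain proportional to a positive power of $r$; for $j\geq 0$ (large $r$), the support of $M_{k,j}^a f$ concentrates in an annulus of dyadic distance $\sim 2^j$ from $\supp f$, so Minkowski's inequality combined with the $L^p$-integrability of $f$ furnishes the decay. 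The subexponential growth of $N_k$ in (\ref{z-e1})--(\ref{z-e2}) ensures that the constants in these auxiliary bounds are absorbed by the factor $2^{-\eta_0 k}$ from (\ref{scales-e3}), so that the final summation, first in $j$ (via the pointwise inequality $\sup_j |a_j|\leq (\sum_j|a_j|^q)^{1/q}$ and Minkowski) and then in $k$, converges and delivers the asserted $L^p\to L^q$ bound on $\tilde{\mathcal{M}}^a$.
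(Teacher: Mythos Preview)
Your overall strategy has a genuine gap: the claimed geometric decay $2^{-\eta_1|j|}$ in the dyadic dilation parameter $j$ cannot be obtained for a general $f\in L^p(\rr)$, and both halves of your proposed interpolation endpoint fail.

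For large $j$ your argument reads ``the support of $M_{k,j}^a f$ concentrates in an annulus of dyadic distance $\sim 2^j$ from $\supp f$, so $L^p$-integrability furnishes decay.'' But $f$ need not have compact support, and even when it does the scaling factor $r^a=2^{ja}$ with $a=\frac1p-\frac1q$ is \emph{exactly} the compensation dictated by dilation invariance: after the substitution $\tilde f_j(v)=f(2^jv)$ you yourself observe that $\|M_{k,j}^a f\|_q\asymp 2^{-\eta_0 k}\|f\|_p$ with no residual dependence on $j$. No further decay can come from support or integrability considerations alone. For small $j$ you appeal to $\int\sigma_k=0$ and a ``Taylor-style gain proportional to a positive power of $r$.'' On the Fourier side this amounts to $|\widehat{\sigma_k}(r\xi)|\lesssim r|\xi|$ for small $r|\xi|$, which gives decay only on the low-frequency part of $f$; for a general $f\in L^2$ the $L^2\to L^2$ norm of $f\mapsto\int f(\cdot+ry)\sigma_k(y)\,dy$ is uniformly bounded but does \emph{not} tend to $0$ as $r\to 0$. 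Thus there is no auxiliary $L^2$ estimate with $|j|$-decay to interpolate against.

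What is missing is a decomposition of $f$ itself into scale-localized pieces, so that the decay comes from the \emph{gap} between the dilation scale and the scale of each piece of $f$, rather than from the dilation scale alone. The paper carries this out via a Haar (martingale-difference) decomposition: for $r\sim 2^{-m}$ one writes $f=\mathbb E_mf+\sum_{s\geq 0}\Delta_{s+m}f$, proves a single-scale estimate with decay $2^{-\eta\sqrt{s}}$ on each piece $\Delta_{s+m}f$ (this is where the growth of $M_k$ enters, via Lemma~\ref{scales-lemma3}), and then recombines using $\ell^q$ in $m$ together with a Littlewood--Paley square-function bound $(\sum_m\|\Delta_m f\|_p^s)^{1/s}\lesssim\|f\|_p$. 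The square-function step is what replaces your attempted sum over $j$, and it is essential: without frequency-localizing $f$ there is no mechanism to produce summable decay across dilation scales.
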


\begin{proposition} \label{p-less-2}
Suppose that there exists $\epsilon \in [0, \frac{1}{3})$ such that (\ref{scales-e3}) holds for all functions $f$ supported in $[0,1]$ and all exponents $(p,q)$ satisfying 
\begin{equation} \label{pq-conditions}
1 < p \leq q \leq 2, \quad \frac{1+\epsilon}{1 - \epsilon} < p < \infty, \quad 1 < q < \frac{1 - \epsilon}{2 \epsilon}p. 
\end{equation}  
Then $\tilde{\mathcal M}^a$ is bounded from $L^p(\mathbb R)$ to $L^q(\mathbb R)$ for all such $(p,q)$, with $a = \frac{1}{p} - \frac{1}{q}$.
\end{proposition}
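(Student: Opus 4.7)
My plan follows the approach attributed to Seeger in the acknowledgements, combined with the $L^p\to L^q$ techniques of \cite{gsw99-pams}. By Lemma \ref{spatial-lemma} and summation of (\ref{scales-e3}) in $k$, we may work with the full restricted operator $\mathcal M$ bounded from $L^p(\rr)$ to $L^q(\rr)$ for $(p,q)$ as in (\ref{pq-conditions}), and the task is to upgrade this single-scale ($r\in[1,2]$) bound to the supremum over all $r>0$.

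First, I would dyadically decompose the dilation parameter, writing $\tilde{\mathcal M}^a f(x)=\sup_{l\in\zz}M_l^a f(x)$ where $M_l^a$ restricts $r$ to $[2^l,2^{l+1})$. The substitution $r=2^l r'$, $g(u)=f(2^l u)$ converts each $M_l^a$ into a rescaled copy of $\mathcal M$: explicitly, $M_l^a f(x)\le 2^{(l+1)a}\,\mathcal M g(x/2^l)$. Combining this with $\|g\|_p=2^{-l/p}\|f\|_p$, the Jacobian factor $2^{l/q}$ in the $L^q$ norm, and the relation $a=\frac{1}{p}-\frac{1}{q}$, we obtain the scale-uniform bound $\|M_l^a f\|_q\le C\|f\|_p$ for every $l\in\zz$.

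Second, I would pass from this single-scale bound to the supremum over $l$. Here I would adapt Seeger's strategy based on \cite{nagel-stein-wainger78,duo-rdf86,christ88-unpublished} to the present setting. The idea is to split into small scales $l\le 0$ and large scales $l\ge 0$. For $l\le 0$ the averages $A_r^{\phi_k}f$ behave like an approximate identity in $r$ (converging pointwise to $f$ via Lebesgue differentiation), so $\sup_{r\le 1,\,k\ge 1}|A_r^{\phi_k}f|$ can be dominated by a combination of the restricted operator $\mathcal M f$ (absorbing the $r\in[1,2]$ piece after rescaling) and a Hardy--Littlewood-type maximal function $M_{\text{HL}}f$, both of which are $L^p$-bounded in the relevant range. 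For $l\ge 0$, the essentially disjoint dyadic-shell support structure of $M_l^a f$ (when $f$ has bounded support) combined with the mild decay of $A_r f$ available from the compactness of $\supp f$ controls the large-scale contribution. In the off-diagonal case $p<q$ the factor $r^a$ with $a>0$ provides additional decay that simplifies the analysis. Finally, real-variable interpolation in the style of \cite{gsw99-pams} extends the resulting $L^p\to L^q$ estimate across the full range of exponents in (\ref{pq-conditions}).

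The principal obstacle is the small-scale supremum in the diagonal case $p=q<2$: since $a=0$ there is no automatic decay in $r$, and the crude pointwise comparison $A_r^{\phi_k}f\le |S_k|^{-1}M_{\text{HL}}f$ blows up as $|S_k|\to 0$ with $k$, so one cannot bound $\sup_k$ by a fixed multiple of $M_{\text{HL}}f$. The key technical step is to combine pointwise convergence of the averages for small $r$ with the uniform $L^p$-boundedness of the restricted maximal operator $\mathcal M$, so as to dominate $\sup_{0<r\le 1,\,k\ge 1}A_r^{\phi_k}f$ by $\mathcal M f + C\,M_{\text{HL}}f$ without any divergent $|S_k|^{-1}$ factor; this bypasses the lack of $r$-decay in the diagonal regime and is what forces the hypothesis $p<2$ (as opposed to the $q\ge 2$ case treated in Proposition \ref{scales-prop5}, where an $\ell^q$ summation handles the supremum over scales directly).
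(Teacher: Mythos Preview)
Your first step (rescaling to get uniform single-scale bounds $\|M_l^a f\|_q\le C\|f\|_p$) is correct and matches the paper. The second step, however, has a genuine gap: the proposed domination $\sup_{0<r\le 1,\,k\ge 1}A_r^{\phi_k}f \le \mathcal M f + C\,M_{\text{HL}}f$ is essentially the conclusion you are trying to prove, not a reduction. The operator $\mathcal M$ only sees $r\in[1,2]$, and $M_{\text{HL}}$ carries no information about the sparse structure of $S_k$; there is no mechanism in your sketch for controlling the supremum over \emph{both} $r$ small \emph{and} $k$ large simultaneously. Invoking ``pointwise convergence of the averages for small $r$'' is circular, since that convergence (the differentiation theorem) is a consequence of the maximal bound, not an input to it. The ``disjoint support'' argument for large scales likewise does not yield an $L^p\to L^q$ bound without further work.

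The paper's route is quite different and does not split into small versus large $r$. Instead it performs a Haar decomposition $f=\mathbb E_m f+\sum_{s\ge 0}\Delta_{s+m}f$ at each dyadic scale $m$, and exploits the quantitative decay $\|\mathcal M(\Delta_s f)\|_q\lesssim 2^{-\eta\sqrt{s}}\|f\|_p$ coming from Lemma~\ref{scales-lemma3} (which in turn uses the exponential decay in $k$ from (\ref{scales-e3})). The crucial device for $q<2$ is a \emph{bootstrap}: one introduces the a priori finite constant $C(p,q;R)$ for the maximal operator truncated to $|m|\le R$, proves an interpolation lemma (Lemma~\ref{seeger-interpolation-lemma}) for the vector-valued operator $\{g_m\}\mapsto\{A_{r2^{-m}}[k]g_m\}$ between $\ell^{p_1}_m$ and $\ell^\infty_m$ endpoints, and combines this with the $2^{-\eta\sqrt{s}}$ decay to obtain an inequality of the form $C(p,q;R)\le K(1+C(p,q;R)^\rho)$ with $\rho<1$. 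This forces $C(p,q;R)$ to be bounded independently of $R$. Your sketch contains neither the Haar decomposition, nor the $2^{-\eta\sqrt{s}}$ decay, nor the bootstrap, and without at least one of these ingredients the argument does not close.
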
 
{\em{Remark: }} Despite the formal similarity, it is worth noting the distinction between the statements of the two propositions. In Proposition \ref{scales-prop5}, the assumption (\ref{scales-e3}) is for a {\em{fixed}} $(p,q)$, and the conclusion is the estimate for the global operator with the same $(p,q)$. In contrast, for Proposition \ref{p-less-2}, the hypothesis (\ref{scales-e3}) is for {\em{all}} $(p,q)$ in the domain (\ref{pq-conditions}). 

\begin{proof}[Conclusion of the proofs of Theorems \ref{thm-main}(\ref{thm-main-unrestricted1}) and \ref{thm-main3}(\ref{thm-main3-unrestricted1})]
Assuming the two propositions, the unrestricted maximal bounds are proved as follows.
It suffices to prove the bounds on $\tilde{\calm}^a$.  Suppose first that we are
in the one-dimensional case (\ref{z-e1}).  Then (\ref{squirrel2}) asserts that
the hypotheses of both Propositions \ref{scales-prop5} and \ref{p-less-2} hold (the latter with $\epsilon = 0$), hence so do the conclusions. In the lower-dimensional case (\ref{z-e2}), the same argument shows that $\tilde{\mathcal M}^a$ is bounded from $L^p(\mathbb R)$ to $L^q(\mathbb R)$ whenever $p,q$ obey (\ref{z-e11}) with $p\leq q$.
\end{proof}

\subsection{Scaling arguments}
The proofs of both Propositions \ref{scales-prop5} and \ref{p-less-2} use the Haar decomposition of a function $f$ and the relation between the averaging operators $A_r[k]$ for various scales of the dilation parameter $r$. We record the necessary facts in the following sequence of lemmas. Following \cite{bourg-86}, we denote by $\mathcal D_s$ the $\sigma$-algebra generated by dyadic intervals of length $2^{-s}$, and by $\mathbb E_s$ the corresponding conditional expectation operators, i.e., $\mathbb E_s(f) = \mathbb E(f|\mathcal D_s)$. We also set \begin{equation} \Delta_s f = \mathbb E_{s+1}(f) - \mathbb E_s(f). \label{def-Deltas} \end{equation}

\begin{lemma}\label{scales-lemma3}
Let $1<p, q<\infty$ and $\eta_0>0$. 
Suppose that (\ref{scales-e3}) holds for all $f$ supported on $[0,1]$.
Then there exists $\eta > 0$ such that 
\begin{equation}\label{scales-e4}
\| \mathcal M f \|_q \leq C2^{-\eta \sqrt{s}} \|f\|_p 
\end{equation}  
for all functions $f \in L^p(\mathbb R)$ satisfying $\mathbb E_s(f) = 0$, $s \geq 0$.  
\end{lemma}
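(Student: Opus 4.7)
The strategy is to combine the exponential decay in $k$ from (\ref{scales-e3}) with a competing estimate exploiting the cancellation $\mathbb E_s f = 0$, then balance the cutoff between the two regimes. Specifically, I would first decompose $f = \sum_{t \geq s}\Delta_t f$ via the Haar expansion (valid since $\mathbb E_s f = 0$). Using $|f| \leq \sum_t|\Delta_t f|$ together with the positivity and sublinearity of $\mathcal M$, this reduces matters to a per-Haar-scale estimate
$$\|\mathcal M(\Delta_t f)\|_q \leq C\, 2^{-\eta'\sqrt t}\|\Delta_t f\|_p$$
for each $t \geq s$. Summing over $t\geq s$ and using the trivial bound $\|\Delta_t f\|_p \leq 2\|f\|_p$ together with $\sum_{t\geq s}2^{-\eta'\sqrt t}\lesssim \sqrt s\cdot 2^{-\eta'\sqrt s}$ then yields the conclusion with any $\eta<\eta'$.

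For the per-scale estimate, fix $t \geq s$ and set $h := \Delta_t f$, so that $\mathbb E_t h = 0$. By the decomposition of Section \ref{subsec-lin-disc}, $\mathcal M h \leq \mathcal N h + \sum_{j \geq 1}\mathcal M_j h$, and for each $j$ I would establish two competing bounds. The first is simply the hypothesis: $\|\mathcal M_j h\|_q \leq C\,2^{-\eta_0 j}\|h\|_p$, after reducing to compactly-supported data via Lemma \ref{spatial-lemma}. The second uses the cancellation: writing $g_{x,r}(z) := r^{-1}\sigma_j((z-x)/r)$ and applying $\mathbb E_t h = 0$ inside the integral,
$$\Bigl|\int h(x+ry)\sigma_j(y)\,dy\Bigr| = \Bigl|\int h(z)\bigl(g_{x,r}-\mathbb E_t g_{x,r}\bigr)(z)\,dz\Bigr| \leq \|h\|_p\, \|g_{x,r}-\mathbb E_t g_{x,r}\|_{p'}.$$
Since $\sigma_j$ is piecewise constant with $O(P_{j+1})$ jumps at scale $\delta_{j+1}$ and $\|\sigma_j\|_\infty \leq (P_{j+1}\delta_{j+1})^{-1}$, the difference $g_{x,r}-\mathbb E_t g_{x,r}$ is supported on dyadic intervals of length $2^{-t}$ meeting a jump of $g_{x,r}$ (total measure $O(P_{j+1}\cdot 2^{-t})$) and has sup-norm $\leq 2\|g_{x,r}\|_\infty$. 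This yields $\|g_{x,r}-\mathbb E_t g_{x,r}\|_{p'} \leq C M_{j+1}^\beta\, 2^{-t/p'}$ uniformly in $x$ and $r$, for some $\beta=\beta(p)>0$ (using Theorem \ref{random-mainprop}(a) to express $P_{j+1}$ in terms of $M_{j+1}$). Since $\mathcal M_j h$ is supported on a bounded interval, one concludes $\|\mathcal M_j h\|_q \leq C M_{j+1}^\beta\, 2^{-t/p'}\|h\|_p$.

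The two bounds balance when $2^{-\eta_0 j} \sim M_{j+1}^\beta\, 2^{-t/p'}$. By the parameter choice in Section \ref{sec-choose}, $\log_2 M_j = O(j^2)$, so the critical scale is $j_*\sim\sqrt t$ and both bounds at $j_*$ are $\sim 2^{-\eta'\sqrt t}$. Summing $\min(\text{hypothesis},\text{cancellation})$ over $j$ gives $\|\mathcal M h\|_q \leq C\, 2^{-\eta'\sqrt t}\|h\|_p$ (the hypothesis contributes a geometric tail for $j>j_*$; the cancellation contributes $O(\sqrt t)$ terms each $\lesssim 2^{-\eta'\sqrt t}$). The term $\mathcal N h$ is handled by the same cancellation idea with $\phi_1$ in place of $\sigma_j$. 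The main obstacle is the cancellation bound: making $\|g_{x,r}-\mathbb E_t g_{x,r}\|_{p'}$ uniformly small in $x$ and $r\in(1,2)$ requires that the jump count of $g_{x,r}$ be controlled independently of $r$ and that the $L^\infty$ bound on $g_{x,r}$ likewise be uniform; the remaining steps are routine summation and balancing.
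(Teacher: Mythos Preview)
Your approach is correct and essentially coincides with the paper's, with one superfluous layer: the initial Haar decomposition $f = \sum_{t\geq s}\Delta_t f$ is not needed. The paper works directly with $f$ satisfying $\mathbb E_s f = 0$, splitting $\sum_k\|\mathcal M_k f\|_q$ at $k_0\sim c\sqrt s$ and using the hypothesis (\ref{scales-e3}) for $k>k_0$ versus an end-piece cancellation bound for $k\leq k_0$; the latter bounds $|\int f(x+ry)\phi_k(y)\,dy|$ by $M_k\,2^{1-s/p'}\|f\|_p$, which is exactly the dual formulation of your $\|g_{x,r}-\mathbb E_t g_{x,r}\|_{p'}$ estimate. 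Your per-scale argument for $h=\Delta_t f$ is therefore the paper's argument with $s$ replaced by $t$, so you are proving the lemma once for each Haar piece and then summing, rather than proving it once for $f$ itself; this costs you the extra factor $\sqrt s$ in the summation and forces $\eta<\eta'$, but since any $\eta>0$ suffices there is no loss.
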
  
\begin{proof}
For any $f$ supported in $[0,1]$, 
$$
|A_r[k](f)|\leq \mathcal N f(x)+ \sum_{m=0}^{k-1} \calm_m |f|,
\quad \text{ hence } \quad \mathcal M f \leq \mathcal N f+ \sum_{k=0}^{\infty} \mathcal M_k|f|, 
$$
where $\mathcal N$ was defined at the beginning of Subsection \ref{subsec-lin-disc}.
Therefore \begin{equation}\label{scales-e5}
\|\mathcal M f \|_q\leq \sum_{k=0}^\infty \|\calm_kf\|_q.
\end{equation}
The right side is clearly summable by (\ref{scales-e3}).  To obtain decay as required in (\ref{scales-e4}),
we will use the assumption that $\mathbb E_s f=0$ to improve the estimate on the terms with
$k\leq k_0$, where $k_0$ will be determined shortly.  We have
\begin{align*}
\calm_kf(x)&= \sup_{1 < r < 2}\Big|\int f(x+ry)(\phi_{k+1}(y)-\phi_k(y))dy\Big|\\
&\leq \Big|\int f(x+ry)\phi_k(y)dy\Big|+\Big|\int f(x+ry)\phi_{k+1}(y)dy\Big|.
\end{align*}
Suppose that $2^{-s}<\delta_{k+1}$, and consider the term with $\phi_k$ first.
Each of the $\delta_{k}$-intervals $\{ I_{k}(\mathbf i) : \kappa_{k}(\mathbf i) = 1\}$ 
in the support of $\phi_{k}$ can be written as a union of 
some number of dyadic $2^{-s}$-intervals together with two intervals $J_1(\mathbf i, s)$ 
and $J_2(\mathbf i, s)$ of length at most $2^{-s}$, one at each end of $I_k(\mathbf i)$.  
Since $f$ integrates to 0 on each dyadic interval, the only non-zero contribution comes from
the intervals $J_j(\mathbf i, s)$.
By H\"older's inequality, we see that
\begin{align*}
\Bigl|\int f(x+ry) \phi_{k}(y)\, dy \Bigr| &= \frac{1}{P_{k}\delta_{k}} \Bigl| \sum_{j=1}^2 \sum_{\kappa_k(\mathbf i)=1} \int_{J_j(\mathbf i, s)}f(ry)dy \Bigr| \\
&\leq\frac{1}{P_k\delta_k}\|f\|_p(2 P_k\cdot 2^{-s})^{\frac{1}{p'}}\\
&=\frac{1}{P_k^{1/p}\delta_k}2^{1 -\frac{s}{p'}}\|f\|_p
\leq M_k 2^{1 -\frac{s}{p'}}\|f\|_p
\ .\\
\end{align*}
The term with $\phi_{k+1}$ is estimated similarly.  Taking the $L^q$ norm of the left side and 
using the fixed compact support of $\mathcal M_k f$, we see that
\begin{align*}
\|\calm_k f\|_q\leq M_k 2^{1 -\frac{s}{p'}}\|f\|_p\leq  C2^{-\frac{s}{p'}}N^{k(k+3)/2} \|f\|_p,
\end{align*}
where we used (\ref{z-e1}) and (\ref{z-e2}) at the last step. 
Let $k_0\approx c\sqrt{s}$ with a small enough constant, then for all $k\leq k_0$ we have
$N^{k(k+3)/2}2^{-{s}/(2p')} \leq C$, so that 
\[ \|\calm_k f\|_q  \leq C2^{-s/(2p')}\|f\|_p \quad \text{ for $k\leq k_0$.} \]
We now use this along with (\ref{scales-e3}) to estimate the right side of (\ref{scales-e5}):
\begin{align*}
\sum_{k=1}^\infty \|\calm_kf\|_q&=\sum_{k=1}^{k_0} \|\calm_kf\|_q+\sum_{k>k_0} \|\calm_kf\|_q\\
&\leq C k_02^{-s/(2p')}\|f\|_p+ C \sum_{k>k_0}2^{-\eta_0 k}\|f\|_p\\
& \leq C 2^{-s/(4p')}\|f\|_p+ C 2^{-c \eta_0 \sqrt{s}}\|f\|_p \leq C 2^{-\eta \sqrt{s}} \|f\|_p,
\end{align*}
as claimed in (\ref{scales-e4}). This proves the result for functions $f$ supported in [0,1]. The extension to a general $f$ is achieved by a ``disjointness of support''  argument identical to the one given in Lemma \ref{spatial-lemma} and is left to the reader.     
\end{proof}

We will also need the following rescaled version of (\ref{scales-e4}).

\begin{lemma}\label{scales-lemma2}
Suppose that (\ref{scales-e4}) holds for all functions $f \in L^p(\mathbb R)$
satisfying $\mathbb E_s(f) = 0$ for some $s \geq 0$. Then for any $m \in \mathbb Z$ and all $f \in L^p(\rr)$, 
\begin{equation}\label{scales-e1}
\Big\|\sup_{\begin{subarray}{c} k \geq 1 \\ 1 \leq r2^m \leq 2 \end{subarray}} 
\left| A_r[k](\Delta_{s+m}f) \right| \Big\|_q
\leq C\cdot 2^{ma-\eta \sqrt{s}}\|\Delta_{s+m}f\|_p.
\end{equation}
Here $\Delta_s f$ is as in (\ref{def-Deltas}).
\end{lemma}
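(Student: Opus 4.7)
The idea is to reduce to the single-scale estimate (\ref{scales-e4}) by an affine change of variables that rescales the dilation parameter into the interval $[1,2]$. Set $g = \Delta_{s+m}f$ and define the rescaled function $g_m(x) := g(2^{-m}x)$. The first task is to verify that $g_m$ satisfies the mean-zero hypothesis required to apply (\ref{scales-e4}) at level $s$. From the definition $\Delta_{s+m}f = \mathbb E_{s+m+1}f - \mathbb E_{s+m}f$ we have $\mathbb E_{s+m}(g) = 0$, i.e. $\int_I g(y)\,dy = 0$ for every dyadic interval $I$ of length $2^{-(s+m)}$. A dyadic interval $J$ of length $2^{-s}$ in the $x$ variable corresponds, under $y = 2^{-m}x$, to the dyadic interval $2^{-m}J$ of length $2^{-(s+m)}$, so $\int_J g_m(x)\,dx = 2^m \int_{2^{-m}J} g(y)\,dy = 0$. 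Thus $\mathbb E_s(g_m) = 0$, and hypothesis (\ref{scales-e4}) yields
\begin{equation}\label{scales-plan-e1}
\|\mathcal M g_m\|_q \leq C 2^{-\eta\sqrt{s}}\|g_m\|_p.
\end{equation}

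Next I would translate the left-hand side of (\ref{scales-e1}) into $\|\mathcal M g_m\|_q$ via the same change of variables. Setting $r' = 2^m r$, the condition $1 \leq r\cdot 2^m \leq 2$ becomes $r'\in[1,2]$, and a direct calculation gives
\[
A_r[k]g(x) = \int g\bigl(x + r'\cdot 2^{-m}y\bigr)\phi_k(y)\,dy
= \int g_m\bigl(2^m x + r' y\bigr)\phi_k(y)\,dy = A_{r'}[k]g_m(2^m x).
\]
Taking the supremum over $k\geq 1$ and $r'\in[1,2]$ yields
\[
\sup_{\begin{subarray}{c}k\geq 1 \\ 1\leq r\cdot 2^m\leq 2\end{subarray}}\bigl|A_r[k]g(x)\bigr| = \mathcal M g_m(2^m x),
\]
so that a change of variable $u = 2^m x$ gives
\begin{equation}\label{scales-plan-e2}
\Big\|\sup_{k,\, 1\leq r\cdot 2^m\leq 2}|A_r[k]g|\Big\|_q = 2^{-m/q}\|\mathcal M g_m\|_q.
\end{equation}

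Finally, the trivial scaling $\|g_m\|_p = 2^{m/p}\|g\|_p$, combined with (\ref{scales-plan-e1}) and (\ref{scales-plan-e2}), produces
\[
\Big\|\sup_{k,\, 1\leq r\cdot 2^m\leq 2}|A_r[k]g|\Big\|_q
\leq C\cdot 2^{-m/q}\cdot 2^{-\eta\sqrt{s}}\cdot 2^{m/p}\|g\|_p
= C\cdot 2^{m(\frac{1}{p}-\frac{1}{q})-\eta\sqrt{s}}\|g\|_p,
\]
which is exactly (\ref{scales-e1}) since $a = \frac{1}{p}-\frac{1}{q}$ and $g = \Delta_{s+m}f$. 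The argument is essentially a bookkeeping exercise in dyadic scaling; the only genuine point requiring care is the covariance of the conditional-expectation operators under dyadic dilation, which is what allows us to invoke (\ref{scales-e4}) at level $s$ after rescaling. I do not anticipate any serious obstacle.
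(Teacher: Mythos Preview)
Your proposal is correct and follows essentially the same route as the paper: rescale $r\mapsto r'=2^m r$, observe that the dilated function $(\Delta_{s+m}f)(2^{-m}\cdot)$ has $\mathbb E_s$ equal to zero, apply (\ref{scales-e4}), and track the powers of $2^m$ from the $L^p$ and $L^q$ norms. Your verification that the rescaled function lies in the kernel of $\mathbb E_s$ is actually more explicit than the paper's.
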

\begin{proof}
Let $u=r2^m$, so that  $1\leq u\leq 2$. We have
\begin{equation}\label{scales-e2}
\begin{split}
A_r[k]f(x)&=\int f(x+ry)\phi_k(y)dy\\
&=\int f(x+2^{-m}uy)\phi_k(y)dy\\
&=\int f(2^{-m}(2^mx+uy))\phi_k(y)dy\\
&=A_u[k](f^{(m)})(2^mx),
\end{split}
\end{equation}
where $f^{(m)}(\cdot)=f(2^{-m}\cdot)$.  Note also that $(\Delta_{s+m}f)^{(m)}=\Delta_{s+m}(2^{-m}\cdot)$
is constant on dyadic $2^{-s}$-intervals, i.e. $\mathbb E_s((\Delta_{s+m}f)^{(m)})  =0$.
By (\ref{scales-e4}), we have
\begin{equation*}
\begin{split}
\Bigl\|
\sup_{\begin{subarray}{c}k \geq 1\\ 1\leq r 2^m\leq 2 \end{subarray}} 
\left|A_r[k](\Delta_{s+m}f)\right|\Bigr\|_q
&= 
\Bigl\|\sup_{\begin{subarray}{c}k \geq 1\\ 1\leq u\leq 2 \end{subarray}} 
\left|\big(A_u[k](\Delta_{s+m}f)^{(m)}\big)(2^m\cdot)\right|\Bigr\|_q\\
&=  2^{-m/q} \|\mathcal M (\Delta_{s+m} f)^{(m)}\|_q \\
&\leq C
2^{-m/q}\, 2^{-\eta \sqrt{s}} \left\|(\Delta_{s+m}f)^{(m)}\right\|_p\\
&= C
2^{-m/q}\, 2^{-\eta \sqrt{s}}\, 2^{m/p} \left\|\Delta_{s+m}f\right\|_p\\
&= C
2^{ma}\, 2^{-\eta \sqrt{s}}\, \left\|\Delta_{s+m}f\right\|_p\ .\\
\end{split}
\end{equation*}

\end{proof}

Finally, we need a technical lemma.

\begin{lemma} Given any $0 \leq a < 1$, there is a constant $C = C(a)$ such that for any $m \in \mathbb Z$ and all $f \in L^p(\mathbb R)$, 
\begin{align*}\label{cranberry}
& \sup_{\begin{subarray}{c}k \geq 1\\ 1 \leq r2^m \leq 2 \end{subarray}} 
r^a \bigl| A_r[k]\mathbb E_m f(x) \bigr| \leq Cf^{\ast}(x), \text{ where } \\
& f^{\ast}(x) := \sup_{r>0}r^{a-1}\int_{|y|\leq r}|f(x-y)|dy.
\end{align*} 
The mapping $f \mapsto f^{\ast}$ is bounded from $L^p(\mathbb R) \rightarrow L^q(\mathbb R)$ for all $1 < p \leq q \leq \infty$ for which $a=\frac{1}{p} - \frac{1}{q}$. \label{first-term-lemma}
\end{lemma}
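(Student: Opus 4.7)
The plan is to establish the pointwise inequality first, and then deduce the Lebesgue space bounds on $f \mapsto f^{\ast}$ from a standard interpolation between the Hardy--Littlewood maximal theorem and the trivial $L^p\to L^\infty$ bound.

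For the pointwise bound, fix $x\in\rr$, $m\in\zz$, $k\geq 1$, and $r\in[2^{-m},2^{1-m}]$. Since $\mathbb E_m f$ is constant on every dyadic interval of length $2^{-m}$, taking there the value of the corresponding average of $f$, one has $|\mathbb E_m f(z)|\leq 2^m\int_{I_m(z)}|f(w)|\,dw$ for every $z$, where $I_m(z)$ is the dyadic $2^{-m}$-interval containing $z$. Writing $A_r[k]\mathbb E_m f(x)=\int \mathbb E_m f(x+ry)\phi_k(y)\,dy$ and using that $\phi_k$ is a probability density supported in $[1,2]$, together with the fact that $|ry|\leq 2r\leq 2^{2-m}$ for $y\in[1,2]$, one sees that the interval $I_m(x+ry)$ is contained in $[x-5\cdot 2^{-m},x+5\cdot 2^{-m}]$. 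Consequently
\begin{equation*}
|A_r[k]\mathbb E_m f(x)|\leq 2^m\int_{|w|\leq 5\cdot 2^{-m}}|f(x-w)|\,dw.
\end{equation*}
Setting $R=5\cdot 2^{-m}$ and inserting the factor $R^{1-a}R^{a-1}=1$ yields
\begin{equation*}
|A_r[k]\mathbb E_m f(x)|\leq 2^m R^{1-a}\,f^{\ast}(x)=5^{1-a}\,2^{ma}f^{\ast}(x).
\end{equation*}
Multiplying by $r^a\leq 2^{a(1-m)}$ gives $r^a|A_r[k]\mathbb E_m f(x)|\leq 2^a 5^{1-a}f^{\ast}(x)$, which is the desired pointwise inequality with $C=C(a)=2^a 5^{1-a}$.

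For the mapping property of $f\mapsto f^{\ast}$, I will derive two pointwise bounds on the integral and optimize in $r$. On the one hand,
\begin{equation*}
r^{a-1}\int_{|y|\leq r}|f(x-y)|\,dy\leq 2 r^a Mf(x),
\end{equation*}
where $Mf$ is the centered Hardy--Littlewood maximal function. On the other hand, by H\"older's inequality,
\begin{equation*}
r^{a-1}\int_{|y|\leq r}|f(x-y)|\,dy\leq 2^{1/p'} r^{a-1/p}\|f\|_p.
\end{equation*}
When $q=\infty$ one has $a=1/p$, so $a-1/p=0$ and the second estimate gives $\|f^{\ast}\|_\infty\leq 2^{1/p'}\|f\|_p$ immediately. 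For $1<p\leq q<\infty$ with $a=1/p-1/q\in[0,1/p)$, choose $r$ so that the two bounds coincide; this gives
\begin{equation*}
f^{\ast}(x)\leq C (Mf(x))^{1-ap}\|f\|_p^{ap}=C(Mf(x))^{p/q}\|f\|_p^{1-p/q},
\end{equation*}
since $1-ap=p/q$. Raising to the $q$-th power, integrating, and applying the Hardy--Littlewood maximal inequality (valid because $p>1$) yields
\begin{equation*}
\|f^{\ast}\|_q^q\leq C\|f\|_p^{q-p}\|Mf\|_p^p\leq C'\|f\|_p^q,
\end{equation*}
as required.

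There is no serious obstacle in this argument; the only points to watch are a clean bookkeeping of the radii in the pointwise estimate (ensuring that the dyadic interval supporting $\mathbb E_m f(x+ry)$ indeed sits inside a controlled neighbourhood of $x$) and the verification that the range $1<p\leq q\leq\infty$, $a=\frac{1}{p}-\frac{1}{q}$, is exactly what makes the optimization step above legitimate.
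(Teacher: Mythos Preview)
Your proof is correct. The pointwise inequality is argued essentially as in the paper: both proofs observe that for $r\sim 2^{-m}$ the set $x+rS_k$ sits inside an interval of length $\sim 2^{-m}$ around $x$, on which $\mathbb E_m f$ takes only boundedly many values, each dominated by an average of $|f|$ over a comparable interval; your bookkeeping with $I_m(x+ry)\subseteq [x-5\cdot 2^{-m},x+5\cdot 2^{-m}]$ is just a cleaner packaging of the paper's covering by $\leq 10$ dyadic $2^{-m}$-intervals.

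For the $L^p\to L^q$ boundedness of $f\mapsto f^{\ast}$ you take a genuinely different route. The paper bounds $f^{\ast}(x)\leq \int |f(z)|\,|x-z|^{a-1}\,dz$ and then invokes the Hardy--Littlewood--Sobolev inequality (weak Young), treating the case $a=0$ separately via the Hardy--Littlewood maximal theorem. You instead run the Hedberg-type argument: combine the two pointwise estimates $r^{a-1}\int_{|y|\leq r}|f(x-y)|\,dy\leq 2r^a Mf(x)$ and $\leq Cr^{a-1/p}\|f\|_p$, optimize in $r$, and obtain $f^{\ast}(x)\leq C(Mf(x))^{p/q}\|f\|_p^{1-p/q}$, which feeds directly into the strong-type maximal inequality. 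Your approach is a bit more self-contained (it avoids the weak-type convolution inequality and handles $a=0$ uniformly), while the paper's approach makes explicit that $f^{\ast}$ is dominated by a fractional integral. Both are standard and equivalent in strength here.
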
 
\begin{proof} 
Since $S_k \subseteq [1,2]$ and $r \leq 2^{-m+1}$, the set $x+ r S_k$ is contained
in an interval $J$ centered at $x$ of length $2^{-m+3}$.  Observe that
$J$ can be covered by at most 10 dyadic $2^{-m}$-intervals $J_i$.  On each
$J_i$, we have ${\mathbb E}_m(f)\equiv \lambda_i$, where $\lambda_i$ is the average
of $f$ on $J_i$.  Since $A_r[k]\mathbb E_m f(x)$ is a convex linear combination of the
$\lambda_i$-s, it suffices to prove that $r^a \lambda_i\leq f^*(x)$.
But this follows from
$$
r^a \lambda_i=\frac{r^a}{|J_i|} \int_{J_i} |f(y)| dy
\leq \frac{10r^a}{|J|} \int_{J'} |f(y)| dy
\leq \frac{C}{|J'|^{1-a}} \int_{J'} |f(y)| dy
\leq f^\ast (x),
$$
where $J'$ is an interval of length $2|J|$ centered at $x$ so that 
$J\subset \bigcup J_i\subset J'$.

If $p=q$, then $a=0$ and $f^{\ast}$ is simply the Hardy-Littlewood maximal function of $f$, which is 
bounded  on all $L^p$ for $p>1$.  If on the other hand $1<p<q \leq \infty$, then $0<a<1$ and
$$
f^{\ast}(x)=\sup_{r>0}r^{a-1}\int_{|x-z|\leq r}|f(z)|dz
\leq\int_{-\infty}^\infty \frac{|f(z)|}{|x-z|^{1-a}}dz.
$$
Since $f\in L^p(\rr)$ and $|z|^{a-1}$ is in weak $L^{\frac{1}{1-a}}(\rr)$, it follows by Young's inequality that the mapping $f\to f^{\ast}$ is bounded from $L^p(\mathbb R)$ to $L^q(\mathbb R)$ with $1+\frac{1}{q}=\frac{1}{p} +(1-a)$, as claimed.
\end{proof}

\subsection{Proof of Proposition \ref{scales-prop5}}
Given $m \in \mathbb Z$ such that $2^{-m} \leq r \leq 2^{-m+1}$, we write $f = \mathbb E_m(f) + \sum_{s \geq m} \Delta_s(f)$, where  
$\Delta_s(f)$ is defined as in (\ref{def-Deltas}). Therefore 
\begin{equation}  \label{haar-decomp}
A_r[k](f) = A_r[k](\mathbb E_m f) + \sum_{s \geq m} A_r[k](\Delta_s f),  
\end{equation}
so that 
\begin{equation} \tilde{\mathcal M}^af \leq \sup_{m \in \mathbb Z}\sup_{\begin{subarray}{c} k \geq 1 \\ 1 \leq r2^m  \leq 2  \end{subarray}}r^a \Biggl[ | A_r[k]\mathbb E_m(f)(x)| +   \Bigl| \sum_{s \geq m} A_r[k](\Delta_s f) \Bigr| \Biggr]. \label{At}  \end{equation}
The first term is bounded from $L^p \rightarrow L^q$ by Lemma \ref{first-term-lemma}. Turning our attention to the second term of (\ref{At}), 
it suffices to prove that
\begin{equation}\label{scales-e10}
\Biggl\|\sup_{m\in\zz}\sup_{\begin{subarray}{c}k \geq 1\\ 1\leq r 2^m\leq 2 \end{subarray}} 
2^{-ma}\Bigl| \sum_{s \geq m} A_r[k](\Delta_s f)\Bigr| \Biggr\|_q\leq C\|f\|_p\ .
\end{equation}
We write
\begin{equation*}
\begin{split}
\sup_{m\in\zz}\sup_{\begin{subarray}{c}k \geq 1\\ 1\leq r2^m\leq 2 \end{subarray}} 
2^{-ma}\Bigl| \sum_{s \geq m} A_r[k](\Delta_s f)\Bigr|
&\leq \Biggl[\sum_{m\in\zz}2^{-maq}
\sup_{\begin{subarray}{c}k \geq 1\\ 1\leq r2^m\leq 2 \end{subarray}} \Bigl|\sum_{s\geq m} 
A_r[k](\Delta_s f)\Bigr|^q\Biggr]^{\frac 1q}\\
&\leq \Biggl[\sum_{m\in\zz}2^{-maq}\Bigl(\sum_{s\geq m} 
\sup_{\begin{subarray}{c}k \geq 1\\ 1\leq r2^m\leq 2 \end{subarray}} 
\left|A_r[k](\Delta_s f)\right|\Bigr)^q\Biggr]^{\frac 1q}.
\end{split}
\end{equation*}
Taking the $L^q$-norms of both sides, then using  Lemma \ref{scales-lemma2} (whose hypothesis in turn is true by Lemma \ref{scales-lemma3}),
we see that the left side of (\ref{scales-e10}) is
bounded by
\begin{equation*}
\begin{split}
\Biggl(\sum_{m\in\zz}2^{-maq} &\Bigg\| \sum_{s\geq m} 
\sup_{\begin{subarray}{c}k \geq 1\\ 1\leq r2^m\leq 2 \end{subarray}} 
|A_r[k](\Delta_s f)|\Biggr\|_q^q\Biggr)^{\frac 1q} \\
&\leq \Biggl(\sum_{m\in\zz}2^{-maq}\Biggl[\sum_{s\geq m} 
\Bigl\|\sup_{\begin{subarray}{c}k \geq 1\\ 1\leq r2^m\leq 2 \end{subarray}} 
\bigl|A_r[k](\Delta_{s}f)\bigr|\Bigr\|_q\Biggr]^q\Biggr)^{\frac 1q}\\
&\leq C \Bigl(\sum_{m\in\zz}\Bigl[\sum_{s\geq m} 2^{-\eta \sqrt{s-m}}
\left\|\Delta_s f\right\|_p\Bigr]^q\Bigr)^{\frac 1q}.
\end{split}
\end{equation*}
The last line is the $\ell^q$-norm of the convolution of the discrete functions ${\bf 1}_{m\geq 0}
2^{-\eta \sqrt{m}}$ and $\|\Delta_m f\|_p$.  Applying Young's inequality with $s=\max(p,2)$ and
$\frac{1}{s}+\frac{1}{r}=1+\frac{1}{q}$, we bound it by
\begin{equation}\label{scales-e20}
\Bigl(\sum_{m \geq 0} 2^{-\eta \sqrt{m} r}\Bigr)^{\frac 1r}
\Bigl(\sum_{m \in \mathbb Z} \|\Delta_m f \|_p^s\Bigr)^{\frac 1s}
\leq C \Bigl(\sum_m\|\Delta_m f\|_p^s\Bigr)^{\frac 1s}\ .
\end{equation}
It remains to show that 
\begin{equation}\label{scales-e21}
\Bigl(\sum_m\|\Delta_m f \|_p^s\Bigr)^{\frac 1s} \leq C \|f\|_p\ .
\end{equation}
Suppose first that $p\geq 2$, so that $s=p$.  Then the claim is trivial for $p=\infty$, and 
for $p=2$ it follows from the orthogonality of $\Delta_m f$.  By interpolation, this implies (\ref{scales-e21}) for all $p\in[2,\infty)$. Assume next that $1<p<2$, so that $s=2$.  Then  
\begin{align*}
\Bigl(\sum_{m \in \mathbb Z} \|\Delta_m f\|_p^2\Bigr)^{\frac 12}
\leq \Bigl \| \bigl( \sum_{m \in \mathbb Z} |\Delta_m f|^2 \bigr)^{\frac{1}{2}} \Bigr \|_p \leq C_p ||f||_p,
\end{align*}
where the first step follows from the generalized Minkowski inequality and the second from Littlewood-Paley theory. This proves the claim (\ref{scales-e21}).

\qed

\subsection{Proof of Proposition \ref{p-less-2}}
As indicated in the remark following Proposition \ref{p-less-2}, the conclusion is immediate from Proposition \ref{scales-prop5} if $q = 2$. Fix $\epsilon \in [0,\frac{1}{3})$ and exponents $(p,q)$, $q < 2$ satisfying (\ref{pq-conditions}). We denote by $C(p,q;R)$ the norm of the linear operator  
\begin{equation} 
f \mapsto \bigl\{2^{-ma} A_{r2^{-m}}[k]f : -R \leq m \leq R, \, 1 \leq r \leq 2, \,  k \geq 1 \bigr\}, 
\end{equation} 
mapping $L^p(\mathbb R)$ to $L^q(\ell_m^{\infty}L_r^{\infty} \ell_k^{\infty})$, where $a = \frac{1}{p} - \frac{1}{q}$. In other words, $C(p,q;R)$ is the best constant such that the following inequality holds for all $f$: 
\begin{equation} \label{best-constant}
\bigl\| \sup_{-R \leq m \leq R} \sup_{1 \leq r \leq 2} \sup_{k \geq 1} 2^{-ma} \bigl|A_{r2^{-m}}[k]f \bigr| \bigr\|_q \leq C(p,q;R) \|f\|_p. 
\end{equation} 
We first ensure that $C(p,q;R)$ is well-defined. The hypothesis (\ref{scales-e3}) implies (\ref{z-e10}) after summing in $k$, hence the inequality in (\ref{z-e10}) continues to hold for all $f \in L^p(\mathbb R)$ by Lemma \ref{spatial-lemma}. By the scaling argument in (\ref{scales-e2}), this implies that for every fixed $m \in \mathbb Z$, 
\begin{equation} \bigl\| 2^{-ma} \sup_{1 \leq r \leq 2} \sup_{k \geq 1} \bigl|A_{r2^{-m}}[k]f \bigr| \bigr\|_q \leq \|\mathcal M\|_{p \rightarrow q} \|f\|_p, \quad f \in L^p(\rr). \label{fixedm} \end{equation}
Thus we already have the trivial bound $C(p,q;R) \leq R \| \mathcal M \|_{p \rightarrow q}$. Our goal is to show that for each $p,q$ in the indicated range, $C(p,q;R)$ is bounded 
uniformly in $R$:
\begin{equation} \label{goal} C(p,q;R) = O_{p,q}(1).\end{equation}
This would imply the conclusion of the proposition, since the left hand side of (\ref{best-constant}) converges as $R \rightarrow \infty$ to a limit that is bounded above and below by positive constant multiples of $\| \tilde{\mathcal M}^a f \|_q$. The convergence is justified by the monotone convergence theorem, which applies because the the operators $A_r[k]$ are non-negative and the functions $f$ can be chosen to be non-negative.   

In order to prove (\ref{goal}) we fix two other auxiliary exponents $(p_1, q_1)$ and $(p_2, q_2)$ obeying (\ref{pq-conditions}), such that $p_1 < p < p_2$, $q_2 = 2$, and the points $\{(\frac{1}{p}, \frac{1}{q}),(\frac{1}{p_1}, \frac{1}{q_1}),(\frac{1}{p_2}, \frac{1}{2})\}$ are collinear. The following lemma provides an essential interpolation ingredient of the proof.
\begin{lemma} \label{seeger-interpolation-lemma}
Given any sequence of functions $\{g_m : -R \leq m \leq R \}$, define 
\[ \mathcal T_a(\{ g_m \}) = \bigl\{2^{-ma} A_{r2^{-m}}[k]g_m : -R \leq m \leq R, \, 1 \leq r \leq 2, \, k \geq 1 \bigr\}. \]
\begin{enumerate}[(a)]
\item For any $(p,q)$ obeying (\ref{pq-conditions}), the operator $\mathcal T_{a(p,q)} : L^{p}_x \ell_m^{\infty} \rightarrow L_x^q \ell_m^{\infty} L_r^{\infty} \ell_k^{\infty}$ has norm bounded by $C(p,q;R)$, with $a(p,q) = \frac{1}{p} - \frac{1}{q}$.   \label{seeger-parta}
\item For any $(p_1, q_1)$ obeying (\ref{pq-conditions}), there is a constant $K_1 = K_1(p_1, q_1)$ independent of $R$ such that the operator $\mathcal T_{a(p_1,q_1)} : L^{p_1}_x \ell_m^{p_1} \rightarrow L_x^{q_1} \ell_m^{p_1} L_r^{\infty} \ell_k^{\infty}$ is bounded with norm $\leq K_1$.  \label{seeger-partb}
\item If $p_1 < p$, the norm of the operator $\mathcal T_{a(p_3,q_3)} : L^{p_3}_x \ell_m^{2} \rightarrow L_x^{q_3} \ell_m^{2} L_r^{\infty} \ell_k^{\infty}$ is bounded by $K_1^{\frac{p_1}{2}} C(p, q;R)^{1 - \frac{p_1}{2}}$; i.e.,  \label{seeger-partc}
\begin{multline} \label{interp-p3}
\Bigl\| \Bigl(\sum_{m=-R}^{R} \bigl[ 2^{-ma} \sup_{1 \leq r \leq 2} \sup_{k \geq 1} \bigl|A_{r2^{-m}}[k]g_m \bigr| \bigr]^2  \Bigr)^{\frac{1}{2}} \Bigr\|_{q_3} \\ \leq K_1^{\frac{p_1}{2}} C(p, q; R)^{1 - \frac{p_1}{2}} \Bigl\| \Bigl( \sum_m |g_m|^2 \Bigr)^{\frac{1}{2}}\Bigr\|_{p_3}.
\end{multline} 
Here $\frac{1}{p_3} = \frac{1}{p} + \frac{p_1}{2}(\frac{1}{p_1} - \frac{1}{p})$, and $(\frac{1}{p_i}, \frac{1}{q_i})$, $i=1,2, 3$ are collinear.  
\end{enumerate}
\end{lemma}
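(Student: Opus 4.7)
The plan is to prove (a), (b), (c) in order, with (c) obtained by vector-valued complex interpolation from (a) and (b).

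For (a), the idea is to exploit the positivity of $A_r[k]$: setting $f := \sup_m |g_m|$, one has $|A_{r2^{-m}}[k] g_m(x)| \leq A_{r2^{-m}}[k] f(x)$ for every $m, r, k$. Taking suprema over $(m,r,k)$ (weighted by $2^{-ma}$) and then $L^q_x$-norms reduces the vector-valued estimate to the scalar one built into the definition (\ref{best-constant}) of $C(p,q;R)$. No further argument is needed.

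For (b), I would handle each $m$ separately via the single-scale bound (\ref{fixedm}) -- a consequence of the hypothesis (\ref{scales-e3}), Lemma \ref{spatial-lemma}, and the scaling identity (\ref{scales-e2}) -- which gives, with $h_m := 2^{-m a(p_1,q_1)} \sup_{r,k} |A_{r2^{-m}}[k] g_m|$, the estimate $\|h_m\|_{q_1} \leq \|\mathcal{M}\|_{p_1 \to q_1}\|g_m\|_{p_1}$ for every $m$. Minkowski's inequality applied to $\|(\sum_m h_m^{p_1})^{1/p_1}\|_{L^{q_1}_x}$ (valid because $p_1 \leq q_1$ by (\ref{pq-conditions})) assembles these into (b) with $K_1 := \|\mathcal{M}\|_{p_1 \to q_1}$, a constant finite and independent of $R$.

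For (c), the plan is complex interpolation between (a) and (b). The crucial preliminary is that $a(p,q) = a(p_1, q_1)$: the collinearity hypothesis on $\{(1/p, 1/q), (1/p_1, 1/q_1), (1/p_2, 1/2)\}$ in the proof of Proposition \ref{p-less-2} places all three pairs on the slope-$1$ line $\{1/p' - 1/q' = a\}$ (the slope is $1$ because $1/p_2 - 1/q_2 = 1/p_2 - 1/2$ must equal $a$), so $\mathcal{T}_a$ is a single linear operator appearing in both endpoint estimates. Treating $Z := L^\infty_r \ell^\infty_k$ as a fixed Banach space (the suprema in $r,k$ can be linearized via measurable selectors $r(x), k(x)$), (a) and (b) read as operator bounds $\mathcal{T}_a \colon L^p_x(\ell^\infty_m) \to L^q_x(\ell^\infty_m(Z))$ with norm $C(p,q;R)$ and $\mathcal{T}_a \colon L^{p_1}_x(\ell^{p_1}_m) \to L^{q_1}_x(\ell^{p_1}_m(Z))$ with norm $K_1$. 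Complex interpolation at $\theta = p_1/2$, using the standard identities $[\ell^\infty_m, \ell^{p_1}_m]_\theta = \ell^2_m$ and $[L^{s_0}_x, L^{s_1}_x]_\theta = L^{s_\theta}_x$ applied to $(p, p_1) \mapsto p_3$ and $(q, q_1) \mapsto q_3$, then delivers (\ref{interp-p3}) with constant $C(p,q;R)^{1 - p_1/2} K_1^{p_1/2}$.

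The main obstacle I foresee is technical rather than conceptual: one must justify that complex interpolation applies to these mixed-norm Lebesgue-Bochner spaces in the form needed, i.e. that $[L^{p_0}_x(\ell^{r_0}_m(Z)), L^{p_1}_x(\ell^{r_1}_m(Z))]_\theta = L^{p_\theta}_x(\ell^{r_\theta}_m(Z))$ when only the outer exponents vary and the inner Banach space $Z$ is held fixed. This is standard (cf. Bergh-L{\"o}fstr{\"o}m) but demands careful bookkeeping, particularly since one of the endpoint sequence spaces is $\ell^\infty_m$.
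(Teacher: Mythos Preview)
Your treatment of parts (a) and (b) is correct and essentially identical to the paper's: positivity of $A_r[k]$ reduces (a) to the defining inequality (\ref{best-constant}), and for (b) the single-scale bound (\ref{fixedm}) plus Minkowski (valid since $p_1\le q_1$) give $K_1=\|\mathcal M\|_{p_1\to q_1}$.

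There is a genuine gap in (c). Your claim that $a(p,q)=a(p_1,q_1)$ is false: the collinearity of $(1/p,1/q)$, $(1/p_1,1/q_1)$, $(1/p_2,1/2)$ imposed in the proof of Proposition~\ref{p-less-2} does not force the common line to have slope $1$. Nothing in the setup says $1/p_2-1/2$ equals $a(p,q)$; the exponents $(p_1,q_1)$ are chosen freely subject to $p_1<p$ and (\ref{pq-conditions}), and then $(p_2,2)$ is determined by collinearity, so the slope can be anything. Consequently $\mathcal T_{a(p,q)}$ and $\mathcal T_{a(p_1,q_1)}$ are genuinely different operators, and you cannot interpolate them as a single fixed operator between two pairs of spaces.

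The fix is to interpolate the \emph{family} $z\mapsto \mathcal T_{a_z}$, where $a_z$ interpolates linearly between $a(p,q)$ and $a(p_1,q_1)$; this is what the paper's phrase ``complex interpolation of the family of operators $\mathcal T_a$'' means. The dependence on $z$ enters only through the multiplier $2^{-m a_z}$, which is entire and of admissible growth on vertical strips (indeed $|2^{-m(a+it)}|=2^{-ma}$), so Stein's interpolation for analytic families applies. At $\theta=p_1/2$ one gets the intermediate sequence space $\ell^2_m$ (since $p_1<2$), the intermediate outer exponents $(p_3,q_3)$, and the intermediate weight exponent $a(p_3,q_3)=(1-\theta)a(p,q)+\theta\,a(p_1,q_1)$, yielding (\ref{interp-p3}) with the stated constant. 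Your technical concern about $\ell^\infty_m$ is harmless here because $m$ ranges over the finite set $\{-R,\dots,R\}$.
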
 
\begin{proof}
Part (\ref{seeger-parta}) is a consequence of the non-negativity of $A_r[k]$ combined with (\ref{best-constant}):
\begin{multline*}
\Bigl\| \sup_{-R \leq m \leq R} \sup_{1 \leq r \leq 2} \sup_{k \geq 1} 2^{-ma}\bigl| A_{r2^{-m}}[k]g_m \bigr| \Bigr\|_q \\ \leq  \Bigl\| \sup_{-R \leq m \leq R} \sup_{1 \leq r \leq 2} \sup_{k \geq 1} 2^{-ma}\bigl| A_{r2^{-m}}[k] \bigl|\sup_{j} g_j \bigr| \bigr| \Bigr\|_q \leq C(p,q;R) \bigl\| \sup_m|g_m| \bigr\|_p.
\end{multline*} 
For part (\ref{seeger-partb}), by the triangle inequality in $L^{q_1/p_1}$ applied to 
functions $|G_m|^{p_1}$ we have
\[ \bigl\| \bigl( \sum_m |G_m|^{p_1}\bigr)^{\frac{1}{p_1}}\bigr\|_{q_1} \leq  \Bigl(\sum_m \bigl\| G_m \bigr\|_{q_1}^{p_1} \Bigr)^{\frac{1}{p_1}} \quad \text{ since } p_1 \leq q_1.\]
Using this with $G_m = 2^{-ma} \sup_{1 \leq r \leq 2} \sup_{k \geq 1} A_{r2^{-m}}[k]g_m$, we find 
\begin{align*}
\Bigl\| \Bigl(\sum_m \Bigl| 2^{-ma} \sup_{1 \leq r \leq 2} \sup_{k \geq 1}
&\bigl|A_{r2^{-m}}[k]g_m \bigr| \Bigr|^{p_1} \Bigr)^{\frac{1}{p_1}} \Bigr\|_{q_1} \\
&\leq \Bigl( \sum_m \Bigl\| 2^{-ma} \sup_{1 \leq r \leq 2} \sup_{k \geq 1}
A_{r2^{-m}}[k]g_m \Bigr\|_{q_1}^{p_1}\Bigr)^{\frac{1}{p_1}} \\
&\leq \|\mathcal M\|_{p_1 \rightarrow q_1} \Bigl(\sum_m \| g_m \|_{p_1}^{p_1} \Bigr)^{\frac{1}{p_1}} \\ &= K_1 \Big\| \bigl(\sum_m |g_m|^{p_1} \bigr)^{\frac{1}{p_1}}\Bigr\|_{p_1}, 
\end{align*}   
where we have used (\ref{fixedm}) with $(p_1, q_1)$ at the second step. This gives the conclusion with $K_1 = \| \mathcal M \|_{p_1 \rightarrow q_1}$. Part (\ref{seeger-partc}) now follows by complex interpolation of the family of operators $\mathcal T_a$ between the spaces in parts (\ref{seeger-parta}) and (\ref{seeger-partb}). The interpolation works because $p_1 < 2$, so that $\ell^2$ is intermediate between $\ell^{p_1}$ and $\ell^{\infty}$.  
\end{proof} 
\begin{proof}[Conclusion of the proof of Proposition \ref{p-less-2}]
In order to prove (\ref{goal}), we start again with the Haar decomposition of the function $f$, so that (\ref{haar-decomp}) holds. Thus 
\begin{equation} \label{Cpqr-est} 
\begin{aligned}
\sup_{-R \leq m \leq R} \sup_{1 \leq r \leq 2} \sup_{k \geq 1} &2^{-ma} \bigl|A_{r2^{-m}}[k]f \bigr| \\ &\leq \sup_{-R \leq m \leq R} \sup_{1 \leq r \leq 2} \sup_{k \geq 1} 2^{-ma} \bigl|A_{r2^{-m}}[k] \mathbb E_m f \bigr| \\ &+ \sum_{s \geq 1} \sup_{-R \leq m \leq R} \sup_{1 \leq r \leq 2} \sup_{k \geq 1} 2^{-ma} \bigl| A_{r2^{-m}}[k]\bigl( \Delta_{s+m}f\bigr)\bigr|.
\end{aligned}  
\end{equation}
As before, the first term on the right is bounded pointwise by $f^{\ast}$, and therefore bounded from $L^p \rightarrow L^q$ with norm independent of $R$ by Lemma \ref{first-term-lemma}. We estimate the $L^q$ norms of the summands in (\ref{Cpqr-est}) as follows. On one hand, (\ref{interp-p3}) with $g_m = \Delta_{s+m}f$ implies 
\begin{equation} \label{sq-p3}
\begin{aligned}
\Bigl\| \Bigl(\sum_{m=-R}^{R} \bigl[ 2^{-ma} \sup_{1 \leq r \leq 2} &\sup_{k \geq 1} \bigl|A_{r2^{-m}}[k](\Delta_{s+m}f) \bigr| \bigr]^2  \Bigr)^{\frac{1}{2}} \Bigr\|_{q_3} \\ &\leq K_1^{\frac{p_1}{2}} C(p_1, q_1; R)^{1 - \frac{p_1}{2}} \Bigl\| \Bigl( \sum_m |\Delta_{s+m}f|^2 \Bigr)^{\frac{1}{2}}\Bigr\|_{p_3} \\ &\leq K_1^{\frac{p_1}{2}} C(p_1, q_1; R)^{1 - \frac{p_1}{2}} \|f\|_{p_3},
\end{aligned} 
\end{equation}
where the last step is a consequence of the Littlewood-Paley inequality. 
On the other hand, for all $ s \geq 1$, 
\begin{equation} \label{sq-p2}
\begin{aligned}
\Bigl\| \Bigl(\sum_{m=-R}^{R} \bigl[ 2^{-ma} \sup_{1 \leq r \leq 2} &\sup_{k \geq 1} \bigl|A_{r2^{-m}}[k](\Delta_{s+m}f) \bigr| \bigr]^2  \Bigr)^{\frac{1}{2}} \Bigr\|_{2} \\ &= \Bigl(\sum_m \bigl\| 2^{-ma} \sup_{1 \leq r \leq 2} \sup_{k \geq 1} \bigl|A_{r2^{-m}}(\Delta_{s+m}f) \bigr| \bigr\|_2^2 \Bigr)^{\frac{1}{2}} \\ &\leq C 2^{-\eta \sqrt{s}} \Bigl[\sum_{m} \|\Delta_{s+m}f \|_{p_2}^2 \Bigr]^{\frac{1}{2}} \\ &\leq C 2^{-\eta \sqrt{s}} \Bigl\| \Bigl( \sum_m |\Delta_{m+s}f|^2\Bigr)^{\frac{1}{2}}\Bigr\|_{p_2} \\ &\leq C 2^{-\eta \sqrt{s}} \|f \|_{p_2}. 
\end{aligned}
\end{equation} 
Here $\eta$ is a positive constant (independent of $m$) whose existence is guaranteed by Lemma \ref{scales-lemma2}. The third step above uses the generalized Minkowski inequality (since $p_2 \leq 2$) and the fourth follows from Littlewood-Paley theory. 

Since $p_3 < p < p_2$ and $\{(\frac{1}{p_3}, \frac{1}{q_3}), (\frac{1}{p}, \frac{1}{q}), (\frac{1}{p_2}, \frac{1}{q_2}) \}$ are collinear, we can interpolate between (\ref{sq-p3}) and (\ref{sq-p2}) to obtain $0 < \theta < 1$ such that 
 \begin{align*}
\Bigl\|\sup_{-R \leq m \leq R} \sup_{1 \leq r \leq 2} \sup_{k \geq 1} &2^{-ma} \bigl| A_{r2^{-m}}\bigl( \Delta_{s+m}f\bigr)\bigr| \Bigr\|_q  \\ &\leq \Bigl\| \Bigl(\sum_{m=-R}^{R} \bigl[ 2^{-ma} \sup_{1 \leq r \leq 2} \sup_{k \geq 1} \bigl|A_{r2^{-m}}[k]\Delta_{s+m}f \bigr| \bigr]^2  \Bigr)^{\frac{1}{2}} \Bigr\|_{q} \\ &\leq \bigl(K_1^{\frac{p_1}{2}} C(p, q; R)^{1 - \frac{p_1}{2}} \bigr)^{\theta} \bigl( C 2^{-\eta \sqrt{s}}\bigr)^{1 - \theta} \|f \|_{p}.
\end{align*} 
The right hand side is summable in $s$. In summary, we have obtained the following estimate for  the $L^q$ norm of the left hand side of (\ref{Cpqr-est}): there is a large constant $K$ and $0 < \rho < 1$ such that
\[ \big\| \sup_{-R \leq m \leq R} \sup_{1 \leq r \leq 2} \sup_{k \geq 1} 2^{-ma} \bigl|A_{r2^{-m}}[k]f \bigr| \bigr\|_q \leq K(1 + C(p,q;R)^{\rho}) \|f\|_p. \] 
In view of the definition (\ref{best-constant}) of $C(p,q;R)$, we obtain $C(p,q;R) \leq C(1 + C(p,q;R)^{\rho})$. But this implies that $C(p,q;R)$ is bounded above by a constant depending only on $K,p,q$, but not on $R$, which is the desired conclusion (\ref{goal}). 
\end{proof}

\section{Differentiation results}\label{AP-section}

\subsection{Proof of Theorem \ref{AP-diff} and Theorem \ref{thm-main3}(\ref{lower-dim-diff})}

Assume that $\{S_k \}$ is a sequence of sets for which the maximal operator $\tilde{\mathcal M}$ is bounded on $L^p(\rr)$ for some $p \in (1, \infty)$. We claim that in this case $\{rS_k \}$ differentiates $L^p$ in the sense that (\ref{diff-e10}) holds.

Let $f\in L^p[0,1]$.  We need to prove that
\begin{equation}\label{ap-e0} \lim_{r\to 0} \sup_{k\geq 1} |A_r[k]f(x) - f(x)|=0
\end{equation}
for almost all $x$, where the averages $A_r[k]$ are defined as in (\ref{average-def}).
In other words, it suffices to show that for any $\lambda > 0$ 
\begin{equation} \label{ap-e1}
\Bigl|\Bigl\{x : \lim_{r \rightarrow 0} \sup_{k \geq 1} \bigl| A_r[k]f(x) - f(x) \bigr| > \lambda \Bigr\} \Bigr| = 0. 
\end{equation} 
To this end, fix $t > 0$ and a continuous function $f_{t}$ on $[0,1]$ such that $\|f - f_{t}\|_p < \epsilon$. Since (\ref{ap-e0}) holds for all $x$ for continuous functions, 
\begin{align*}
\Bigl|\Bigl\{x : \lim_{r \rightarrow 0} &\sup_{k \geq 1} \bigl| A_r[k]f(x) - f(x) \bigr| > \lambda \Bigr\} \Bigr| \\ &= \Bigl|\Bigl\{x : \lim_{r \rightarrow 0} \sup_{k \geq 1} \Bigl| A_r[k](f- f_{t})(x) - (f - f_{t})(x) \Bigr| > \lambda \Bigr\} \Bigr| \\ &\leq \Bigl|\Bigl\{x : \tilde{\mathcal M}(f - f_{t})(x) > \frac{\lambda}{2} \Bigr\} \Bigr| + \Bigl|\Bigl\{x : |f-f_{t}|(x) > \frac{\lambda}{2} \Bigr\} \Bigr| \\ &\leq \frac{2^p \|\tilde{\mathcal M}(f-f_{t})\|_p^p}{\lambda^p} + \frac{2^p \|f-f_{t}\|_p^p}{\lambda^p} \leq C_p \lambda^{-p} t^p, 
\end{align*} 
where the last step uses the boundedness of $\tilde{\mathcal M}$ on $L^p$. Since $t$ was arbitrary, (\ref{ap-e1}) and hence (\ref{ap-e0}) are proved. 

The proof of (\ref{diff-e-meas}) is similar, except that we use the bounds on the maximal
operator $\tilde{\mathfrak M}$ instead of $\tilde\calm$.  The details are left to the interested reader.

\subsection{The $L^1$ case}\label{preiss-example}

The following proposition, due to David Preiss (private communication), shows that
(\ref{diff-e-meas}) cannot hold for all $f\in L^1(\rr)$ if $\mu$ is a probability measure singular
with respect to Lebesgue.  
\begin{proposition}\label{preiss-prop}
Suppose that $\mu$ is a probability measure on $\rr$ such that its restriction
to $\rr\setminus\{0\}$ is not absolutely continuous with respect to the Lebesgue
measure.  Then there is a function $f\in L^1(\rr)$ such that for every $x\in\rr$ the set
$$
Z_x=\Big\{r\in(0,\infty):\ \int f(x+ry)d\mu(y)=\infty\Big\}
$$
is dense in $(0,\infty)$.
\end{proposition}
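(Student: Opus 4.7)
My plan is to exploit a consequence of the differentiation theorem for measures. By hypothesis the restriction of $\mu$ to $\rr \setminus \{0\}$ has a nontrivial singular part, and the mutual-singularity form of Lebesgue differentiation supplies a point $y_0 \in \rr \setminus \{0\}$ at which
\[
\lim_{\epsilon \to 0} \frac{\mu(B(y_0, \epsilon))}{2\epsilon} = +\infty.
\]
We may take $y_0 > 0$. The function $f$ will be a superposition of shrinking bumps centered at a dense sequence in $\rr$, engineered so that for every target $(x, r)$ a suitable bump picks up the infinite local density of $\mu$ at $y_0$ after the affine change of variables $y \mapsto x + r y$.

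Fix a countable basis $\{(a_n, b_n)\}_{n \geq 1}$ of nonempty open subintervals of $(0, \infty)$ with rational endpoints, and a countable dense sequence $\{z_m\}_{m \geq 1} \subset \rr$. For each basis interval $(a, b) = (a_n, b_n)$, pick $\delta_\ell \downarrow 0$ with $\mu(B(y_0, \epsilon)) \geq 2 \ell^2 \epsilon$ for every $0 < \epsilon \leq \delta_\ell$ (possible by the displayed limit), and set $\epsilon_\ell := a \delta_\ell$. Define
\[
f_{a, b}(z) := \sum_{m, \ell \geq 1} \frac{2^{-m}}{\ell^2 \epsilon_\ell}\, \mathbf{1}_{B(z_m, \epsilon_\ell)}(z),
\]
so that $\|f_{a, b}\|_1 = 2 \sum_m 2^{-m} \sum_\ell \ell^{-2} < \infty$. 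Finally let $f := \sum_n 2^{-n} f_{a_n, b_n} \in L^1(\rr)$.

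To verify the conclusion, fix $x \in \rr$ and a basis interval $(a, b)$. For each $m$ with $r_m := (z_m - x)/y_0 \in (a, b)$, the identity $x + r_m y_0 = z_m$ gives
\[
\int \mathbf{1}_{B(z_m, \epsilon_\ell)}(x + r_m y)\, d\mu(y) = \mu(B(y_0, \epsilon_\ell/r_m)) \geq \frac{2 \ell^2 \epsilon_\ell}{r_m},
\]
since $\epsilon_\ell/r_m \leq \epsilon_\ell/a = \delta_\ell$. The $(m, \ell)$-summand therefore contributes at least $2^{-m+1}/r_m \geq 2^{-m+1}/b$ to $\int f_{a,b}(x + r_m y)\, d\mu(y)$, and summing over $\ell$ already yields $+\infty$; hence $r_m \in Z_x$. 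Since $\{z_m\}$ is dense in $\rr$ and $z \mapsto (z - x)/y_0$ is an affine bijection, the values $\{r_m : r_m \in (a, b),\ m \geq 1\}$ are dense in $(a, b)$ for every $x$, so $Z_x$ is dense in $(0, \infty)$. The conceptual heart of the argument --- and the only place the singularity hypothesis enters essentially --- is the passage to a single point $y_0$ of infinite $\mu$-density; this replaces the two-parameter covering problem over $(x, r)$ by a one-parameter bump construction, circumventing a Fubini-type obstruction that defeats the naive strategy of fattening and covering a Lebesgue-null, $\mu$-positive set.
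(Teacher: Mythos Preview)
Your argument is correct and follows the same core strategy as the paper's: locate a point $y_0\neq 0$ of infinite $\mu$-density, plant $L^1$ bumps at a countable dense set of centers $\{z_m\}$, and for given $x$ and target interval $(a,b)$ produce $r\in Z_x\cap(a,b)$ via the affine relation $r=(z_m-x)/y_0$. The only real difference is in the bump: the paper builds a single template $h(z)=g^{-1}(|z|)$ from a function $g\in L^1(0,\infty)$ satisfying $\int_0^\infty g(t)\,\eta(\lambda g(t))\,dt=\infty$ for every $\lambda>0$, and this scale-invariance is what handles all $r$ simultaneously; you instead use stacks of normalized indicators $\epsilon_\ell^{-1}\mathbf 1_{B(z_m,\epsilon_\ell)}$ with $\epsilon_\ell=a\delta_\ell$ tuned separately to each basis interval $(a_n,b_n)$, and then sum over $n$. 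Your route is more explicit and sidesteps the construction of $g$, at the price of an extra summation layer. One small remark: the assertion ``we may take $y_0>0$'' is not actually justified (the singular part of $\mu$ could be supported entirely on the negative half-line), but nothing in your argument uses the sign of $y_0$---the map $z\mapsto (z-x)/y_0$ is a bijection of $\rr$ either way, so the density of $\{r_m\}$ in $(a,b)$ and the inequality $\epsilon_\ell/r_m\leq\delta_\ell$ go through unchanged.
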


\begin{proof}
We may choose an $x_0\neq 0$ such that 
$\mu(x_0-r,x_0+r)/(2r)\to\infty$ as $r \searrow 0$
(see \cite[Theorem 7.15]{rudin}).
In particular, there is a $\rho_0>0$ and a continuous function 
$\eta:(0,\infty)\to [0,\infty)$ such that
$\eta(r)\to\infty$ as $r \searrow 0$, $\eta\equiv 0$ on $[\rho_0,\infty)$, $\eta$ is
strictly decreasing on $(0,\rho_0)$, and 
\begin{equation}\label{mulberry-e1}
\frac{\mu(x_0-r,x_0+r)}{2r}\geq\eta(r)\text{ for all } r \in( 0,\rho_0).
\end{equation}
Let $g\in L^1(0,\infty)$ be a continuous, nonnegative and strictly decreasing function 
such that 
$$\int_0^\infty g(y)\eta(\lambda g(y))dy=\infty \text{ for any }\lambda >0.$$
Let $h(x)=g^{-1}(|x|)$ for $0<|x|<\rho_0$ and $h(x)=0$ for $|x|>\rho_0$. Define
$$
f(x)=\sum_{j=1}^\infty 2^{-j}h(x-x_j),
$$
where the sequence $\{x_j\}_{j=1}^\infty$ is dense in $\rr$.  Then $f\in L^1(\rr)$, since
$$
\int_\rr f(x)dx=\int_\rr h(x)dx=\int_0^\infty|\{x:\ h(x)>t\}|dt
=2\int_0^\infty g(t)dt<\infty.
$$ 
We must prove that for any 
$x\in\rr$ and $a<b$, the interval $(a,b)$ contains a point of $Z_x$.
Indeed, by the density of $\{x_j\}$ there is a $j\geq 1$ such that 
$r:=(x_j-x)/x_0\in(a,b).$
Then 
\begin{equation}\label{mulberry-e2}
\begin{aligned}
\int h(x-x_j+ry)d\mu(y)&=\int h(r(y-x_0))d\mu(y)\\
&=\int_0^\infty \mu(\{y:\ h(r(y-x_0))>t\})dt\\
&=\int_0^\infty \mu\Big(x_0-\frac{g(t)}{r},x_0+\frac{g(t)}{r}\Big)dt\\
&\geq \int_0^\infty \frac{g(t)}{r}\eta\Big(\frac{g(t)}{r}\Big)dt=\infty,\\
\end{aligned}
\end{equation}
so that
\begin{equation}\label{mulberry-e3}
\int f(x+ry)d\mu(y)\geq \sum_{j=1}^\infty 2^{-j}\int h(x-x_j+ry)d\mu(y)=\infty
\end{equation}
as required.

\end{proof}

\textit{Remark.}
The above argument can be adapted to show that (\ref{diff-e10}) cannot hold for all $f\in L^1(\rr)$
if $\{E_k\}$ is a decreasing sequence of subsets
of $[1,2]$ (or any other interval separated from zero) with $|E_k|\to 0$.  Namely, fix 
any such sequence $\{E_k\}$ and let $\phi_k={\bf 1}_{E_k}/|E_k|$ as before.  Then
there is a subsequence $\{\phi_{j_k}\}_{k=1}^\infty$ converging weakly to a probability
measure $\mu$ supported on a set $E$ of measure 0.  Without loss of generality we may
assume that $j_k=k$. 
Let also $\mu_k$ be the absolutely continuous measure with density
$\phi_k$.  We claim that 
\begin{equation}\label{mulberry-e4}
\lim_{k\to\infty} \int f(x+ry)d\mu_k(y)dy=\infty \text{ for all }r\in Z_x,\ x\in\rr.
\end{equation}
To prove (\ref{mulberry-e4}), we first ask the reader to verify that (\ref{mulberry-e1}) implies the following 
statement: for every $\rho_1>0$ there is a $K=K(\rho_1)$ such that
\begin{equation*}
\frac{\mu_k(x_0- \rho,x_0+\rho)}{2\rho}\geq \frac{1}{4}\eta(\rho)\text{ for all }
\rho_1<\rho<\rho_0,\ k>K(\rho_1).
\end{equation*}
With $g,h,f$ as above, we then have as in (\ref{mulberry-e2})  
\begin{equation*}
\begin{aligned}
\int h(x-x_j+ry)d\mu_k(y)
&=\int_0^\infty \mu_k\Big(x_0-\frac{g(t)}{r},x_0+\frac{g(t)}{r}\Big)dt\\
&\geq \int_0^{R} \frac{g(t)}{2r}\eta\Big(\frac{g(t)}{r}\Big)dt,\\
\end{aligned}
\end{equation*}
for any $R>0$,
provided that $k>K(g(R)/r)$.  Since the last integral can be made arbitrarily large as
$R\to\infty$, (\ref{mulberry-e4}) follows as in (\ref{mulberry-e3}).

}

\bibliographystyle{amsplain}

\noindent{\sc Department of Mathematics, University of British Columbia, Vancouver,
B.C. V6T 1Z2, Canada}
                                                                                     
\noindent{\it ilaba@math.ubc.ca, malabika@math.ubc.ca}

\end{document}